\numberwithin{equation}{section}
\newtheorem{thm}{Theorem}[section]
\newtheorem{prop}[thm]{Proposition}
\newtheorem{lem}[thm]{Lemma}
\newtheorem{coro}[thm]{Corollary}
\theoremstyle{definition}
\newtheorem{defi}[thm]{Definition}
\theoremstyle{remark} 
\newtheorem{rem}[thm]{Remark}
\newcommand{\C}{\mathbb{C}}
\newcommand{\qq}{\mathbb{Q}}
\newcommand{\F}{\mathbb{F}}
\newcommand{\ZZ}{\mathbb{Z}}
\newcommand{\HH}{\mathrm{H}}
\newcommand{\oscr}{\mathscr{O}}
\newcommand{\Tr}{\mathrm{Tr}}
\newcommand{\DD}{\mathbf{D}}
\DeclareMathOperator{\im}{Im}
\DeclareMathOperator{\End}{End}
\DeclareMathOperator{\Spec}{Spec}
\DeclareMathOperator{\Spa}{Spa}
\DeclareMathOperator*{\colim}{colim}
\DeclareMathOperator{\ord}{ord}
\DeclareMathOperator{\naive}{naive}
\DeclareMathOperator{\can}{can}
\newcommand{\et}{\mathrm{\acute{e}t}}
\DeclareMathOperator{\un}{un}
\title{Higher Hida and Coleman theories on the modular curve}
\author{George Boxer}
\email{george.boxer@ens-lyon.fr}
\address{Unit\'e de Math\'ematiques pures et appliqu\'ees,
Ecole normale sup\'erieure de Lyon,
46 all\'ee d'Italie,
69 364 Lyon Cedex 07,
France}
\author{Vincent Pilloni}
\email{vincent.pilloni@ens-lyon.fr}
\address{Unit\'e de Math\'ematiques pures et appliqu\'ees,
Ecole normale sup\'erieure de Lyon,
46 all\'ee d'Italie,
69 364 Lyon Cedex 07,
France}
\begin{document}



\maketitle

\begin{prelims}

\DisplayAbstractInEnglish

\bigskip

\DisplayKeyWords

\medskip

\DisplayMSCclass







\end{prelims}


\newpage

\setcounter{tocdepth}{1}

\tableofcontents


\section{Introduction}

In the 80's, Hida introduced an ordinary projector on modular forms and he constructed $p$-adic families of ordinary modular forms \cite{MR868300}. In the 90's, Coleman developed the finite slope  theory \cite{1997InMat.127..417C} and Coleman and Mazur constructed the eigencurve \cite{MR1696469}. These theories have now been extended to higher dimensional Shimura varieties. 

Hida and Coleman theories combine two ideas.  The first is to restrict modular forms from the full modular curve to the ordinary locus and its neighborhoods. The additional structure on the universal $p$-divisible group on and near the ordinary locus (the canonical subgroups) is used to $p$-adically interpolate the sheaves of modular forms and their sections.  The second idea is to use the Hecke operators at $p$ to detect when a section of the sheaf of modular forms defined on (a neighborhood of) the ordinary locus comes from a classical modular form.

Until recently, Hida and Coleman theories had only been developed for degree $0$ coherent cohomology groups.  In  the recent works \cite{pilloniHidacomplexes}, \cite{BCGP}  we began to develop them further in order to study the higher coherent cohomology of automorphic vector bundles on the Shimura varieties for the group $\mathrm{GSp}_4$, and we are now convinced that Hida and Coleman theories should exist in all cohomological degrees for all Shimura varieties.

The purpose of the current work is to confirm this prediction in the simple setting  of modular curves and to construct Hida and Coleman theories for the degree $1$ cohomology groups. We actually construct in parallel the theories for degree $0$ and degree $1$ cohomology, as this  sheds some new light on the usual degree $0$ theory.  We also prove a $p$-adic Serre duality, which gives a perfect pairing between the theories in cohomological degree $0$ and $1$, but our constructions are independent of this pairing.  

Let us describe  the results we prove. Let $X \rightarrow \Spec\ZZ_p$ be the compactified modular curve of level $\Gamma_1(N)$, where $N \geq 3$ is an integer prime to $p$, and let $D$ be the boundary divisor. Let $X_1 \rightarrow \Spec\F_p$ be the special fiber  and $X_1^{\ord}$ be the ordinary locus.  Let $\omega$ be the modular line bundle and for a weight $k\in\ZZ$ write $\omega^k$ for $\omega^{\otimes k}$.

\begin{thm}[Hida's control theorem] There is a Hecke operator  $T_p$  acting on the cohomology groups $\mathrm{R}\Gamma(X_1, \omega^k)$, $\mathrm{R}\Gamma_c({X}_1^{\ord}, \omega^k)$, and $\mathrm{R}\Gamma({X}_1^{\ord}, \omega^k)$,  and  an associated ordinary projector $e(T_p)$. 
Moreover, we have quasi-isomorphisms  $$e(T_p) \mathrm{R}\Gamma(X_1, \omega^k) \longrightarrow e(T_p) \mathrm{R}\Gamma(X_1^{\ord}, \omega^k)~~~~\textrm{if $k \geq 3$}$$ and $$e(T_p) \mathrm{R}\Gamma_c(X^{\ord}_1, \omega^k) \longrightarrow e(T_p) \mathrm{R}\Gamma(X_1, \omega^k)~~~~\textrm{if $k \leq -1$}.$$
\end{thm}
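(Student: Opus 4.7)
The plan is to reduce each of the two quasi-isomorphisms to a vanishing statement for $e(T_p)$ on a local contribution at the supersingular locus. Let $Z \subset X_1$ denote the (finite, reduced) supersingular subscheme, with open complement $j : X_1^{ord} \hookrightarrow X_1$. One has the usual excision triangle
$$ \RR\Gamma_Z(X_1, \omega^k) \to \RR\Gamma(X_1, \omega^k) \to \RR\Gamma(X_1^{ord}, \omega^k) \xrightarrow{+1}, $$
and a companion triangle built from the formal completion $\widehat{X}_{1, Z}$ of $X_1$ along $Z$,
$$ \RR\Gamma_c(X_1^{ord}, \omega^k) \to \RR\Gamma(X_1, \omega^k) \to \RR\Gamma(\widehat{X}_{1, Z}, \omega^k) \xrightarrow{+1}, $$
which one can take as a definition of $\RR\Gamma_c$ in the coherent setting. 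The two asserted quasi-isomorphisms are then equivalent to the vanishings $e(T_p) \RR\Gamma_Z(X_1, \omega^k) = 0$ for $k \geq 3$ and $e(T_p) \RR\Gamma(\widehat{X}_{1, Z}, \omega^k) = 0$ for $k \leq -1$.

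Next I would describe $T_p$ explicitly near $Z$. In characteristic $p$ the Hecke correspondence $X_0(p)_{\F_p}$ used to define $T_p$ has two irreducible components meeting transversally exactly over $Z$: a Frobenius component, on which the universal degree-$p$ isogeny is relative Frobenius, and a Verschiebung component. Decomposing $T_p$ as the sum of the two pull-push contributions from these components, a computation of the induced maps on $\omega^k$ shows that the Frobenius branch acts essentially as $F^{\ast}$ up to a normalization independent of $k$, while the Verschiebung branch is twisted on the conormal line by a factor of the form $p^{k-1}$. The same analysis, with the roles of the two branches swapped by Serre duality, applies on the formal completion side and produces the opposite asymmetry; this is the source of the asymmetric bounds $k \geq 3$ versus $k \leq -1$.

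Finally I would check both vanishings on an explicit local model. After trivialising $\omega$ on each $\Spf \widehat{\ocal}_{X_1, x}$ for $x \in Z$ and choosing a parameter $t$ at which the Hasse invariant has a simple zero, $\RR\Gamma_Z(X_1, \omega^k)$ is identified with the direct sum over $x$ of the principal-parts modules $\kappa(x)((t))/\kappa(x)[[t]]$, and $\RR\Gamma(\widehat{X}_{1, Z}, \omega^k)$ with the direct sum of the $\kappa(x)[[t]]$. Combined with the description of $T_p$ from the previous step, a direct computation shows that in the prescribed weight ranges $T_p$ is topologically nilpotent on these modules, so the idempotent $e(T_p) = \lim T_p^{n!}$ annihilates them, and the quasi-isomorphisms follow from the long exact sequences of the two triangles.

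The main obstacle is this local analysis of $T_p$: one must pin down the combinatorics of the Hecke correspondence at the transverse crossing of its two components in characteristic $p$, track carefully the weight-$k$ twist along the conormal direction, and verify that the resulting local operators are topologically nilpotent in precisely the claimed weight ranges, so as to recover the numerical bounds $k \geq 3$ and $k \leq -1$ rather than a weaker range.
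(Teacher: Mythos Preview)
Your overall strategy is sound and is essentially a repackaging of the paper's argument: the paper works with the filtration $\RR\Gamma(X_1,\omega^k(nSS))$ for $n\in\ZZ$, whose colimit over $n\ge0$ computes $\RR\Gamma(X_1^{ord},\omega^k)$ and whose limit over $n\le0$ computes $\RR\Gamma_c(X_1^{ord},\omega^k)$, and shows that $T_p$ contracts this filtration toward $n=0$. Your excision triangles express the same thing, since $\HH^1_Z(X_1,\omega^k)\simeq\mathrm{colim}_n\,\HH^0(X_1,\omega^k(nSS))/\HH^0(X_1,\omega^k)$ and $\HH^0(\widehat{X}_{1,Z},\omega^k)\simeq\lim_n\HH^0(X_1,\omega^k)/\HH^0(X_1,\omega^k(-nSS))$.

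However, your description of the local mechanism in the second paragraph is inaccurate, and as written would not yield the result. First, the branches are swapped: over $\ZZ_p$ one has $T_p=U_p+p^{k-1}F$ for $k\ge1$ and $T_p=F+p^{1-k}U_p$ for $k\le1$, so it is the \emph{Frobenius} branch that carries the factor $p^{k-1}$ when $k\ge1$. Second, and more importantly, this $p$-power only tells you which branch \emph{dies} mod $p$; it cannot be the source of any contraction on the surviving branch, since you are working in characteristic $p$. The actual mechanism on the surviving branch is geometric: for $k\ge2$ the correspondence $T_p$ mod $p$ is supported on the Verschiebung component $X_0(p)_1^V$, on which $p_1^V$ is totally ramified of degree $p$ over each supersingular point (so $(p_1^V)^*SS=p\,(p_2^V)^*SS$) and the differential $p_2^*\omega\to p_1^*\omega$ is the Hasse invariant, vanishing simply along $SS$. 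Combining the degree-$p$ ramification with the order-$(k-2)$ vanishing of $\mathrm{Ha}^{k-2}$, the paper obtains the precise statement that $T_p$ carries $p_2^*(\omega^k((np+k-2)SS))$ into $p_1^!(\omega^k(nSS))$. It is this inequality $np+k-2\ge n$ that gives local nilpotence on your principal-parts module exactly when $k\ge3$ (for $k=2$ one only contracts down to $\omega^2(SS)$, not $\omega^2$), and the dual computation on the $F$-component gives the bound $k\le-1$ on the formal-completion side. Once you replace your ``$p^{k-1}$'' heuristic by this Hasse-invariant/ramification analysis, your third paragraph goes through.
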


The coherent cohomology with compact support appearing in the theorem has been defined by Hartshorne \cite{MR0297775}.  If $SS\subseteq X_1$ denotes the (reduced) supersingular divisor, then we have
$$\HH^i_c(X_1^{\ord},\omega^k)=\lim_n\HH^i(X_1,\omega^k(-nSS)),\quad \HH^i(X_1^{\ord},\omega^k)=\colim_n\HH^i(X_1,\omega^k(nSS)).$$

The proof of this theorem relies on a local analysis of the cohomological correspondence $T_p$ at supersingular points. The above theorem may also be viewed as a vanishing result: $e(T_p)\mathrm{R}\Gamma(X_1,\omega^k)$ is concentrated in degree $0$ if $k \geq 3$, and degree $1$ if $k \leq -1$, because the ordinary locus is affine.  Of course, this vanishing result holds true even without the ordinary projector, from the Riemann--Roch theorem and the Kodaira--Spencer isomorphism, however the first argument is well suited for generalizations to higher dimensional Shimura varieties.

Let $\Lambda = \ZZ_p[[\ZZ_p^\times]]$ be the Iwasawa algebra. Each integer $k \in \ZZ$ defines a character of $\ZZ_p^\times$, and a morphism $k : \Lambda \rightarrow \ZZ_p$. 
\begin{thm} There are two projective $\Lambda$-modules $M$ and $N$ carrying an action of the Hecke algebra of level prime to $p$, and there are canonical, Hecke-equivariant isomorphisms for all $k \geq 3$:
\begin{enumerate}
\item$M \otimes_{\Lambda, k} \ZZ_p = e(T_p) \HH^0(X, \omega^k)$,
\item  $N \otimes_{\Lambda, k} \ZZ_p = e(T_p) \HH^1(X, \omega^{2-k}(-D))$,
\end{enumerate}

Moreover, there is a perfect pairing  
$ M \times N \rightarrow \Lambda$ which interpolates the classical Serre duality pairing. 

\end{thm}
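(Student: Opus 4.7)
The plan is to realize $M$ and $N$ as ordinary parts of cohomologies of $\Lambda$-adic sheaves on the Igusa tower, and then to use Hida's control theorem (the preceding theorem) to identify the weight-$k$ specializations. First I set up the Igusa tower: let $\mathfrak X$ be the $p$-adic formal completion of $X$, let $\mathfrak X^{ord} \subset \mathfrak X$ be the formal open whose reduction is $X_1^{ord}$ together with the cusps, and let $\pi \colon \mathfrak X^{ord}_\infty \to \mathfrak X^{ord}$ be the Igusa tower parametrizing trivializations of the connected part of the canonical subgroup. Then $\pi$ is a pro-\'etale $\ZZ_p^\times$-torsor whose continuous pushforward produces a $\Lambda$-adic sheaf $\mathfrak W$ on $\mathfrak X^{ord}$ with $\mathfrak W \otimes_{\Lambda,k} \ZZ_p \cong \omega^k|_{\mathfrak X^{ord}}$ for every integer $k$. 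I set
$$M = e(T_p)\, \HH^0(\mathfrak X^{ord}, \mathfrak W), \qquad N = e(T_p)\, \HH^1_c(\mathfrak X^{ord}, \mathfrak W^{[2]}(-D)),$$
where $\mathfrak W^{[2]}$ denotes $\mathfrak W$ with its $\Lambda$-action twisted by $k \mapsto 2-k$ (so that the fibre at weight $k$ becomes $\omega^{2-k}$) and $\HH^1_c$ is cohomology with compact support in $\mathfrak X^{ord} \subset \mathfrak X$.

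Next I prove the specialization and projectivity. Base change along $\Lambda \to \ZZ_p$ yields $M \otimes_{\Lambda,k} \ZZ_p \cong e(T_p) \HH^0(X_1^{ord},\omega^k)$ and $N \otimes_{\Lambda,k} \ZZ_p \cong e(T_p) \HH^1_c(X_1^{ord},\omega^{2-k}(-D))$; Hida's control theorem, applied at $k \geq 3$ on the first and at $2-k \leq -1$ on the second, identifies these with $e(T_p) \HH^0(X,\omega^k)$ and $e(T_p) \HH^1(X,\omega^{2-k}(-D))$ respectively. That $M$ and $N$ are projective over $\Lambda$ then follows by a standard argument in Hida theory: both modules are finitely generated (the ordinary projector is completely continuous on the relevant spaces of $p$-adic forms), and their specializations at infinitely many integer weights have uniformly bounded $\ZZ_p$-rank, forcing $\mathrm{Tor}^1_\Lambda(-,\ZZ_p)$ to vanish at infinitely many closed points of $\Spec\Lambda$, hence everywhere.

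For the pairing I construct a $\Lambda$-adic cup product
$$\HH^0(\mathfrak X^{ord},\mathfrak W) \otimes_\Lambda \HH^1_c(\mathfrak X^{ord},\mathfrak W^{[2]}(-D)) \to \HH^1_c(\mathfrak X^{ord},\mathfrak W \otimes \mathfrak W^{[2]}(-D))$$
and compose it with a $\Lambda$-adic Kodaira--Spencer isomorphism $\mathfrak W \otimes \mathfrak W^{[2]}(-D) \cong \Omega^1_{\mathfrak X^{ord}/\Spf \ZZ_p}$ (whose weight-$k$ fibre is $\omega^k \otimes \omega^{2-k}(-D) \cong \omega^2(-D) \cong \Omega^1_X(\log D)(-D) \cong \Omega^1_X$) and a $\Lambda$-adic residue trace $\HH^1_c(\mathfrak X^{ord},\Omega^1) \to \ZZ_p$. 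Restricting to ordinary parts gives the desired pairing $M \otimes_\Lambda N \to \Lambda$, and by construction its specialization at each $k \geq 3$ is the classical Serre duality pairing. Perfectness of the $\Lambda$-adic pairing then follows because $\Lambda$ is a finite product of regular local rings $\ZZ_p[[T]]$, and an element of $\ZZ_p[[T]]$ that specialises to a $p$-adic unit at a single integer weight is itself a unit; the classical perfectness of Serre duality at one $k \geq 3$ in the correct residue class mod $p-1$ thus forces the determinant of the pairing matrix to be a unit in each component of $\Lambda$.

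The main obstacle, I expect, is the construction of the $\Lambda$-adic Kodaira--Spencer map and residue trace in a way that is manifestly $\Lambda$-linear and specialises correctly at every integer weight. Once these are in hand, the interpolation, base-change, projectivity and perfectness arguments are formal; it is the geometric identification of $\mathfrak W \otimes \mathfrak W^{[2]}(-D)$ with $\Omega^1$ in families --- and the parallel construction of the trace --- that is the technical heart of the construction.
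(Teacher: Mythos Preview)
Your construction has the right architecture, but there is a genuine gap in the choice of operator. You define $M = e(T_p)\,\HH^0(\mathfrak X^{ord},\mathfrak W)$ and $N = e(T_p)\,\HH^1_c(\mathfrak X^{ord},\mathfrak W^{[2]}(-D))$, but $T_p$ does not act $\Lambda$-linearly on these modules: at classical weight $k$ one has $T_p = U_p + p^{k-1}F$ (for $k\geq 1$) and $T_p = F + p^{1-k}U_p$ (for $k\leq 1$), and the coefficient $p^{|k-1|}$ is not the specialization of any element of $\Lambda$. What \emph{does} interpolate over the Igusa tower is the pair of operators $U_p$ (from the normalized trace of Frobenius) and $F$ (from pullback by Frobenius), and the paper's definition is $M = e(U_p)\,\HH^0(\mathfrak X^{ord},\omega^{\kappa^{un}})$ and $N = e(F)\,\HH^1_c(\mathfrak X^{ord},\omega^{2-\kappa^{un}}(-D))$. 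The comparison with $e(T_p)$ on classical cohomology then uses that $T_p \equiv U_p \bmod p$ for $k\geq 2$ and $T_p \equiv F \bmod p$ for $k\leq 0$, so the ordinary projectors agree after specialization in the relevant weight ranges. This asymmetry between the two degrees is not cosmetic: it is exactly what makes the pairing compatible with the projectors, since the crucial adjunction is $\langle U_p f, g\rangle = \langle f, F g\rangle$ (equivalently $D(F)=U_p$), not self-adjointness of a single operator.

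Your concern about a ``$\Lambda$-adic Kodaira--Spencer isomorphism'' is misplaced, and this is not where the difficulty lies. By construction $\omega^{2-\kappa^{un}}(-D) = (\omega^{\kappa^{un}})^\vee \otimes \omega^2(-D)$, so the cup product lands tautologically in $\HH^1_c(\mathfrak X^{ord}, \omega^2(-D)\otimes_{\ZZ_p}\Lambda)$; one then applies the \emph{constant} Kodaira--Spencer isomorphism $\omega^2(-D)\simeq\Omega^1_{X/\ZZ_p}$ and the classical residue, tensored with $\Lambda$. No family version of KS is needed. The actual work is (i) establishing local finiteness of $U_p$ and $F$ on the respective (profinite in degree~1) $\Lambda$-adic cohomologies so that the projectors exist, and (ii) checking the adjunction $\langle U_p f,g\rangle=\langle f,Fg\rangle$, which is what allows the pairing to restrict to $e(U_p)M \times e(F)N$. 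Once that is done, perfectness follows as you say by specialization to a single classical $k\geq 3$ on each component of $\Spec\Lambda$.
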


The modules $M$ and $N$ are obtained by considering the ordinary factor of the cohomology and cohomology with compact support of the ordinary locus, with values in an interpolation sheaf of $\Lambda$-modules. 

\bigskip

Let $X_0(p) \rightarrow \Spa(\qq_p, \ZZ_p)$ be the adic modular curve of level $\Gamma_1(N) \cap \Gamma_0(p)$. We have two quasi-compact opens $X_0(p)^m $ and $X_0(p)^{\et}$ inside $X_0(p)$ which are respectively the loci where the universal subgroup of order $p$ has multiplicative and \'etale reduction. We let $X_0(p)^{m,\dagger} $ and $X_0(p)^{\et, \dagger}$ be the corresponding dagger spaces \cite{MR1739729}.

\begin{thm}[Coleman's classicality theorem]  For all $k \in \ZZ$, there is a well defined Hecke operator $U_p$ which is compact and has non negative slopes on $\HH^i(X_0(p), \omega^k)$, $\HH^i(X_0(p)^{m, \dagger}, \omega^k)$, and $\HH^i_c(X_0(p)^{\et, \dagger}, \omega^k)$.
Moreover, the natural maps $($where the superscript $< \star $ means slope less than $\star$ for $U_p)$: 
\begin{enumerate}
\item $\HH^i(X_0(p), \omega^k)^{< k-1} \rightarrow \HH^i(X_0(p)^{m, \dagger}, \omega^k)^{<k-1}$,
\item $\HH^i_c(X_0(p)^{\et, \dagger}, \omega^k)^{< 1-k} \rightarrow \HH^i(X_0(p), \omega^k)^{<1-k}$
\end{enumerate}
are isomorphisms. 
\end{thm}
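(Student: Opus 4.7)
The plan is to realize $U_p$ as a cohomological correspondence, extract compactness and non-negativity of slopes from formal properties of the correspondence, and then deduce the two classicality isomorphisms from excision triangles combined with a local slope estimate at supersingular points.

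First I would set up $U_p$: let $Y$ be the adic modular curve of level $\Gamma_1(N) \cap \Gamma_0(p^2)$, with two projections $\pi_1, \pi_2 : Y \rightarrow X_0(p)$ (forgetting respectively the first and the second factor of the flag of order-$p$ subgroups). The universal degree $p$ isogeny furnishes a canonical identification $\pi_2^\ast \omega \cong \pi_1^\ast \omega$ on the generic fibre, so $U_p := (\pi_1)_\ast \pi_2^\ast$ is a well defined endomorphism of $\mathrm{R}\Gamma(X_0(p), \omega^k)$. The correspondence restricts to the multiplicative and \'etale loci because one application of $U_p$ pushes the universal subgroup to be more multiplicative (resp. more \'etale), so the relevant strict neighbourhoods are sent into strictly smaller ones. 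This strictly improving restriction factors $U_p$ through a completely continuous map between nested dagger spaces, so $U_p$ is compact on $\mathrm{R}\Gamma(X_0(p)^{m, \dagger}, \omega^k)$ and $\mathrm{R}\Gamma_c(X_0(p)^{et, \dagger}, \omega^k)$; on the proper curve $X_0(p)$ compactness is automatic by finite dimensionality. Non-negativity of the slopes follows from the preservation of an integral subcomplex attached to a formal model of the correspondence.

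For (1), apply the $U_p$-equivariant excision triangle
$$\mathrm{R}\Gamma_Z(X_0(p), \omega^k) \rightarrow \mathrm{R}\Gamma(X_0(p), \omega^k) \rightarrow \mathrm{R}\Gamma(X_0(p)^{m, \dagger}, \omega^k) \stackrel{+1}{\rightarrow}$$
where $Z$ denotes the closed complement of $X_0(p)^{m, \dagger}$ in $X_0(p)$ (the union of anticanonical pieces of the supersingular annuli). The key estimate is that $U_p$ acts on $\mathrm{R}\Gamma_Z(X_0(p), \omega^k)$ with all slopes $\geq k-1$: on the anticanonical side the correspondence quotients by the \emph{\'etale} subgroup, and in integral local coordinates the pullback on $\omega$ scales by a uniformizer (contributing $p^k$ on $\omega^k$), while the pushforward along the degree $p$ map contributes $p^{-1}$. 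Passing to the $< k-1$-slope part therefore annihilates the left term and gives the isomorphism. Statement (2) is proved by the symmetric argument using the triangle
$$\mathrm{R}\Gamma_c(X_0(p)^{et, \dagger}, \omega^k) \rightarrow \mathrm{R}\Gamma(X_0(p), \omega^k) \rightarrow \mathrm{R}\Gamma(Z', \omega^k) \stackrel{+1}{\rightarrow}$$
where $Z'$ is the closed complement of $X_0(p)^{et, \dagger}$, the r\^oles of multiplicative and \'etale being exchanged and the relative dualizing twist changing the effective weight from $k$ to $2-k$, which shifts the bound on the slopes of the cofiber to $\geq 1-k$.

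The hard part is this local slope estimate. It requires choosing an integral model of the $U_p$-correspondence in a formal neighbourhood of each supersingular point, picking an explicit (for instance Cousin-type) representative of the local cohomology $\mathrm{R}\Gamma_Z$ (resp. $\mathrm{R}\Gamma_{Z'}$), and computing the $p$-adic valuations of the matrix coefficients of $(\pi_1)_\ast \pi_2^\ast$ on $\omega^k$ in these coordinates. This computation is the technical heart of the whole Coleman classicality phenomenon and is closely parallel to the local analysis underlying Hida's control theorem from the previous sections.
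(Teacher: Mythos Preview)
Your overall strategy---excision triangle plus a slope estimate on the complementary term---is exactly the paper's. Two points, however, need correction.

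First, only \emph{one} triangle is used, namely
\[
\mathrm{R}\Gamma_{X_0(p)_{[0,a[}}(X_0(p),\omega^k) \longrightarrow \mathrm{R}\Gamma(X_0(p),\omega^k) \longrightarrow \mathrm{R}\Gamma(X_0(p)_{[a,1]},\omega^k) \stackrel{+1}{\longrightarrow}.
\]
The finite-slope parts of all three terms are independent of $a\in(0,1)$, so the left term already computes $\mathrm{R}\Gamma_c(X_0(p)^{et,\dagger},\omega^k)^{fs}$ while the right term computes $\mathrm{R}\Gamma(X_0(p)^{m,\dagger},\omega^k)^{fs}$. Statement (1) then follows because the \emph{left} term has $U_p$-slopes $\geq k-1$; statement (2) because the \emph{right} term has $U_p$-slopes $\geq 1-k$. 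Your second triangle, with a term $\mathrm{R}\Gamma(Z',\omega^k)$ for an analytically closed but not Zariski-closed subset $Z'$, is not well-defined in this coherent/adic setting (there is no restriction functor to such a $Z'$), and no ``relative dualizing twist'' enters the argument.

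Second, the two slope estimates (Lemma \ref{lem-slope}) are obtained rather differently from your heuristic. On the open side $X_0(p)_{[a,1]}$, the finite-slope part concentrates in degree $0$ (the locus is affinoid for $a$ near $1$), and the bound is read off from the $q$-expansion formula $U_p^{naive}(\sum a_n q^n)=p\sum a_{np}q^n$, giving slopes $\geq 1$ for $U_p^{naive}$. On the support side, the crucial preliminary step is Proposition \ref{prop-U_p=Frob}: only the \emph{canonical} component $C^{can}_{[0,a]}$ of the correspondence contributes to $\mathrm{R}\Gamma_{X_0(p)_{[0,a[}}$. On that component $p_1^{can}$ is an \emph{isomorphism}, so there is no trace factor $p^{-1}$; the entire bound $\geq k$ for $U_p^{naive}$ comes from the differential of the canonical isogeny on $\omega^k$. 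Turning this integral statement into a slope bound on cohomology is itself nontrivial: one must show that the image of the integral cohomology $\HH^i_{X_0(p)_{[0,a[}}(X_0(p),\omega^{k,+})$ in any fixed slope part is a lattice, which uses a bounded-torsion comparison between \v{C}ech and derived cohomology. After normalizing $U_p=p^{-\inf\{1,k\}}U_p^{naive}$, both classicality bounds and the non-negativity of slopes on all three complexes follow at once.
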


The proof of this theorem is based on some simple estimates for the operator $U_p$ on the ordinary locus, reminiscent of \cite{MR2219265}. We can again deduce a vanishing theorem for the small slope classical cohomology (without appealing to the Riemann--Roch theorem).  

Coleman and Mazur constructed the eigencurve $\mathcal{C}$ of tame level $\Gamma_1(N)$. It carries a weight morphism $w: \mathcal{C} \rightarrow \mathcal{W}$ where $\mathcal{W}$ is the analytic adic space over $\Spa(\qq_p, \ZZ_p)$ associated with the Iwasawa algebra $\Lambda$. 
\begin{thm} The  eigencurve carries two coherent sheaves $\mathcal{M}$ and $\mathcal{N}$ interpolating the degree $0$ and $1$ finite slope cohomology. For any $k \in \ZZ$, we have 
\begin{enumerate} 
\item $\mathcal{M}_k^{< k-1} = \HH^0(X_0(p), \omega^k)^{< k-1}$,
\item $\mathcal{N}_k^{< k-1} = \HH^1(X_0(p), \omega^{2-k}(-D))^{< k-1}$,
\end{enumerate}
and there is a perfect pairing between $\mathcal{M}$ and $\mathcal{N}$, interpolating the usual Serre duality pairing. 
\end{thm}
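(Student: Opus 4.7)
The plan is to construct $\mathcal{M}$ and $\mathcal{N}$ as the finite slope parts of two families of overconvergent cohomology groups on $X_0(p)^{m,\dagger}$ and $X_0(p)^{et,\dagger}$ parameterized by weight space $\mathcal{W}$, and then to descend them to the eigencurve along the map $\mathcal{C} \to \mathcal{W}$ coming from the weight morphism. First I would build the family of modular sheaves in weight $\kappa$. Over strict neighborhoods of the multiplicative (resp.\ étale) locus, the canonical subgroup of the universal elliptic curve is multiplicative (resp.\ étale), so the Hodge line bundle $\omega$ acquires a canonical trivialization there; twisting this trivialization by the universal character $\kappa : \ZZ_p^\times \to \mathcal{O}_\mathcal{W}^\times$ produces invertible sheaves $\omega^\kappa$ on $X_0(p)^{m,\dagger}$ and $\omega^{2-\kappa}(-D)$ on $X_0(p)^{et,\dagger}$, compatible with the integer-weight constructions.

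Next I would show that $U_p$ acts compactly, with nonnegative slopes, on the families $\HH^0(X_0(p)^{m,\dagger}, \omega^\kappa)$ and $\HH^1_c(X_0(p)^{et,\dagger}, \omega^{2-\kappa}(-D))$, viewed as families of $\mathcal{O}_\mathcal{W}$-modules. This is the family version of the local analysis of $U_p$ underlying Theorem 1.3: the dynamics at supersingular points is unaffected by twisting by a character of $\ZZ_p^\times$, since such characters act on units. Applying the Coleman--Serre spectral theory of compact operators on Banach modules in families then yields locally on $\mathcal{W}$ a slope decomposition, and the finite slope pieces glue to coherent sheaves on the spectral variety $\mathcal{Z} \to \mathcal{W}$ of $U_p$. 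Pulling back along $\mathcal{C}\to \mathcal{Z}$ defines $\mathcal{M}$ and $\mathcal{N}$ on the eigencurve. The specialization identities at an integer weight $k$ follow immediately from Theorem 1.3: the fiber of $\mathcal{M}$ at $k$ is the finite slope part of $\HH^0(X_0(p)^{m,\dagger}, \omega^k)$, whose slope $<k-1$ subspace agrees with $\HH^0(X_0(p),\omega^k)^{<k-1}$ by Theorem 1.3(1); for $\mathcal{N}$ we invoke Theorem 1.3(2) at the weight $k' = 2-k$, where the slope bound $<1-k'$ becomes $<k-1$.

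The pairing would be constructed at the level of dagger cohomology. The two overconvergent neighborhoods $X_0(p)^{m,\dagger}$ and $X_0(p)^{et,\dagger}$ cover $X_0(p)$ after passing to a sufficiently large strict neighborhood, and their overlap is a tubular neighborhood of the supersingular locus; multiplying sections and composing with a Serre trace map yields a cup product
\[
\HH^0(X_0(p)^{m,\dagger}, \omega^\kappa) \otimes \HH^1_c(X_0(p)^{et,\dagger}, \omega^{2-\kappa}(-D)) \longrightarrow \mathcal{O}_\mathcal{W}.
\]
Adjunction between $U_p$ on $\omega^\kappa$ and its transpose on $\omega^{2-\kappa}(-D)$ ensures this pairing preserves finite slope parts, so it descends to a pairing $\mathcal{M}\otimes\mathcal{N}\to \mathcal{O}_\mathcal{C}$. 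Its interpolation of the classical Serre pairing at small slope classical weights is then a direct comparison via Theorem 1.3.

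The hard part will be upgrading this pairing from a map of coherent sheaves to a \emph{perfect} duality. Classical Serre duality gives perfectness only at classical weights of small slope, so one must bootstrap from a Zariski dense set of fibers to a statement about coherent sheaves over all of $\mathcal{C}$; this requires controlling the rank of $\mathcal{M}$ and $\mathcal{N}$ in families (i.e.\ verifying they are projective of the same rank in a neighborhood of each classical weight), and carefully matching the trace map used in the overconvergent construction with the one used in the classical Serre duality, including its behaviour near the supersingular locus where the two dagger spaces meet.
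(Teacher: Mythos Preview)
Your broad outline---build $\omega^{\kappa}$ on strict neighborhoods, show $U_p$ is compact, pass to the spectral variety, then pair---matches the paper's strategy through the construction of $\mathcal{M}$ and $\mathcal{N}$. The specialization identities (1) and (2) do follow from the classicality theorem exactly as you describe. However, your treatment of the pairing and its perfectness diverges from the paper's in a way that matters.

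First, your cup-product construction is not well-posed. The dagger spaces $X_0(p)^{m,\dagger}$ and $X_0(p)^{et,\dagger}$ are the inductive limits of \emph{shrinking} neighborhoods of the multiplicative and \'etale loci; they are disjoint, and the family sheaf $\omega^{\kappa}$ is only defined on small neighborhoods of each, so there is no ``sufficiently large strict neighborhood'' on which both live and over which you could multiply sections. The paper instead works on a \emph{single} dagger space: it defines a pairing
\[
\langle\,,\,\rangle_0:\HH^0(X_0(p)^{m,\dagger},\omega^{\kappa^{un}})\times\HH^1_c(X_0(p)^{m,\dagger},\omega^{2-\kappa^{un}}(-D))\longrightarrow\oscr_{\mathcal{W}_r}
\]
via the residue map, and then transports the second factor to $X_0(p)^{et,\dagger}$ using the Atkin--Lehner involution $w$, setting $\langle\,,\,\rangle=\langle\,\cdot\,,w^\star\cdot\,\rangle_0$. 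It is this Atkin--Lehner twist that makes $U_p$ self-adjoint, not a direct transpose-of-$U_p$ argument.

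Second, and more importantly, the paper does \emph{not} bootstrap perfectness from a Zariski-dense set of classical fibers. Perfectness is obtained directly at the level of the full (infinite-dimensional) overconvergent cohomology, before any slope decomposition, by invoking Gro\ss e-Kl\"onne's Serre duality theorem for smooth affinoid dagger spaces. The only subtlety is that this theorem is stated for coherent sheaves over $\qq_p$, not over $\oscr_{\mathcal{W}_r}$; the paper handles this by observing that on each connected component $\mathcal{W}^i_r$ of weight space the Eisenstein family trivializes $\omega^{\kappa^{un}}$ as $\omega^i\hat\otimes\oscr_{\mathcal{W}^i_r}$, reducing to the classical case. Once $\langle\,,\,\rangle_0$ is perfect on the full cohomology, the finite-slope pieces are canonically dual (hence $\mathcal{Z}=\mathcal{X}$, $\mathcal{C}=\mathcal{D}$, and $\mathcal{M}$, $\mathcal{N}$ are in perfect duality) without any density argument. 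Your proposed density method would require separate control of the local freeness and rank of $\mathcal{M}$ and $\mathcal{N}$, which the paper never needs.
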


\subsection{Acknowledgements}
We would like to thank Frank Calegari and Toby Gee for conversations about the topics in this paper, and David Loeffler for his comments.  We thank Ben Heuer and the anonymous referee for their corrections and suggestions. 

\section{Preliminaries}
\subsection{Finite flat cohomological   correspondences}
In this section all schemes are Noetherian.  We write $\mathrm{Coh}(X)$ for the category of coherent sheaves on a scheme $X$.
\subsubsection{The functors $f_\star$, $f^\star$ and $f^!$}
Let $f:X \rightarrow Y$ be a finite flat morphism of schemes.  We have a  functor $f_\star : \mathrm{Coh}(X) \rightarrow \mathrm{Coh}(Y)$.  It induces an equivalence of categories between $\mathrm{Coh}(X)$ and the category of coherent sheaves of $f_\star\oscr_X$-modules over $Y$.

The functor $f_\star$  has a left adjoint $f^\star : \mathrm{Coh}(Y) \rightarrow \mathrm{Coh}(X)$, given by $f_\star f^\star\mathscr{F}=\mathscr{F} \otimes_{\oscr_{Y}} f_\star \oscr_{X}$, as well as a right adjoint $f^! : \mathrm{Coh}(Y) \rightarrow \mathrm{Coh}(X)$ given by $f_\star f^! \mathscr{F} = \underline{\mathrm{Hom}}_{\oscr_Y}( f_\star \oscr_{X}, \mathscr{F})$.  For any $\mathscr{F} \in \mathrm{Coh}(X)$, we have an isomorphism  $f^! \mathscr{F} = f^! \oscr_{Y} \otimes_{\oscr_{X}} f^\star\mathscr{F}$. 

\medskip

A finite flat morphism $f : X \rightarrow Y$ has a trace map $\mathrm{tr}_f : f_\star \oscr_{X} \rightarrow \oscr_{Y}$. This trace is by definition a global section of $f^! \oscr_{Y}$ or equivalently a morphism $f^\star \oscr_{Y} \rightarrow f^! \oscr_{Y}$.  It follows that the trace map provides a natural transformation $f^\star \Rightarrow f^! $. 

\medskip

A finite flat morphism $ f: X \rightarrow Y$ is called Gorenstein if $f^! \oscr_X$ is an invertible sheaf. If $f : X \rightarrow Y$ if a local complete intersection morphism, then it is Gorenstein (see \cite[Corollary~21.19]{MR1322960}).

\subsubsection{Cohomological  correspondences}
\begin{defi} A finite flat correspondence over  a scheme $X$ is a scheme $C$ equipped with two finite flat morphisms  $X \stackrel{p_2}\leftarrow   C \stackrel{p_1}\rightarrow X$.
\end{defi}

\begin{defi} Let $\mathscr{F}$ be a coherent sheaf on $X$. A  finite flat cohomological correspondence  for $\mathscr{F}$ is the data of a pair $(C,T)$ consisting of a   finite flat correspondence $C$ and a map $ T : p_2^\star \mathscr{F} \rightarrow p_1^! \mathscr{F}$. 
\end{defi}

Given a finite flat cohomological correspondence $(C,T)$ on $\mathscr{F}$, we get a morphism  in cohomology that we also denote by $T$:
$$T : \mathrm{R}\Gamma(X, \mathscr{F}) \xrightarrow{ ~p_2^\star~ } \mathrm{R}\Gamma(C, p_2^\star \mathscr{F}) \xrightarrow{ ~T~ }  \mathrm{R}\Gamma(C, p_1^! \mathscr{F}) \xrightarrow{~\mathrm{tr}_{p_1}} \mathrm{R}\Gamma(X, \mathscr{F}).$$ 

\subsubsection{Restriction} Let $X$ be a scheme and $Y \hookrightarrow X$ be a closed subscheme defined by a sheaf of ideals $\mathscr{I} \subseteq \oscr_{X}$. For any quasi-coherent sheaf $\mathscr{F}$ over $X$, we let $$\underline{\Gamma}_Y(\mathscr{F}) = \mathrm{Ker} (\mathscr{F} \rightarrow \underline{\mathrm{Hom}}( \mathscr{I}, \mathscr{F}))$$ be the subsheaf of sections with scheme theoretic support in $Y$. 

\begin{prop}\label{prop-restriction-simple} Consider a  commutative diagram of schemes: 
\[
\xymatrix{ Y \ar[r]^{i} \ar[rd]_g & X \ar[d]^f \\
& Z}
\]
where $i$ is a closed immersion and $g$, $f$ are finite flat morphisms. 
Let $\mathscr{F}$ be a coherent sheaf on $Z$. Then $i_\star g^! \mathscr{F} = \underline{\Gamma}_Y f^! \mathscr{F}$.
\end{prop}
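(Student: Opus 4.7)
The plan is to verify the universal property characterizing $g^! \mathscr{F}$ as the right adjoint of $g_\star$, applied to the candidate sheaf $\underline{\Gamma}_Y f^! \mathscr{F}$. The first step is to observe that by its very definition $\underline{\Gamma}_Y f^! \mathscr{F}$ is the subsheaf of sections of $f^! \mathscr{F}$ annihilated by $\mathscr{I}$. Since $i$ is a closed immersion with ideal $\mathscr{I}$, the equivalence between coherent $\oscr_X$-modules killed by $\mathscr{I}$ and coherent sheaves on $Y$ produces a unique coherent sheaf $\mathscr{H}$ on $Y$ with $i_\star \mathscr{H} = \underline{\Gamma}_Y f^! \mathscr{F}$, and the proposition reduces to identifying $\mathscr{H}$ with $g^! \mathscr{F}$.

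To make this identification, I compute $\mathrm{Hom}_{\oscr_Y}(\mathscr{G}, \mathscr{H})$ for an arbitrary coherent sheaf $\mathscr{G}$ on $Y$. Using full faithfulness of $i_\star$, this is $\mathrm{Hom}_{\oscr_X}(i_\star \mathscr{G}, \underline{\Gamma}_Y f^! \mathscr{F})$. The key point is that $i_\star \mathscr{G}$ is itself annihilated by $\mathscr{I}$, so every $\oscr_X$-linear map out of it lands in the $\mathscr{I}$-torsion subsheaf of the target; this allows me to drop $\underline{\Gamma}_Y$ and obtain $\mathrm{Hom}_{\oscr_X}(i_\star \mathscr{G}, f^! \mathscr{F})$. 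Applying the $f_\star \dashv f^!$ adjunction together with the relation $f_\star \circ i_\star = g_\star$ (from $f \circ i = g$) turns this into $\mathrm{Hom}_{\oscr_Z}(g_\star \mathscr{G}, \mathscr{F})$. Since the chain of bijections is natural in $\mathscr{G}$, it exhibits $\mathscr{H}$ as the right adjoint of $g_\star$ applied to $\mathscr{F}$, hence $\mathscr{H} \simeq g^! \mathscr{F}$ and therefore $i_\star g^! \mathscr{F} = \underline{\Gamma}_Y f^! \mathscr{F}$.

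The argument is essentially formal adjunction manipulation; the only genuine content is the observation that any morphism from an $\mathscr{I}$-annihilated sheaf factors through the $\mathscr{I}$-torsion subsheaf of the target, which is immediate from the kernel description of $\underline{\Gamma}_Y$. I do not foresee a serious obstacle; as a cross-check one may verify the identity locally, where on an affine patch $\Spec A \leftarrow \Spec B \hookleftarrow \Spec B/I$ with $\mathscr{F}$ corresponding to an $A$-module $M$, the statement reduces to the elementary bijection $\mathrm{Hom}_A(B/I, M) = \{\phi \in \mathrm{Hom}_A(B, M) : I \cdot \phi = 0\}$, which is what the sheaf-theoretic argument is globalizing.
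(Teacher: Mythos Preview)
Your argument is correct. The paper proves the same identity in one line by applying $\underline{\mathrm{Hom}}_{\oscr_Z}(-,\mathscr{F})$ to the exact sequence $0 \to f_\star\mathscr{I} \to f_\star\oscr_X \to g_\star\oscr_Y \to 0$, obtaining
\[
0 \rightarrow \underline{\mathrm{Hom}}(g_\star\oscr_Y,\mathscr{F}) \rightarrow \underline{\mathrm{Hom}}(f_\star\oscr_X,\mathscr{F}) \rightarrow \underline{\mathrm{Hom}}(f_\star\mathscr{I},\mathscr{F}),
\]
which immediately identifies $i_\star g^!\mathscr{F}$ with the $\mathscr{I}$-torsion in $f^!\mathscr{F}$. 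Your approach is the Yoneda repackaging of the same computation: the ``key point'' you isolate (that maps out of an $\mathscr{I}$-annihilated sheaf factor through $\underline{\Gamma}_Y$) is exactly left-exactness of $\underline{\mathrm{Hom}}$ applied to that sequence, and your adjunction chain $\mathrm{Hom}_{\oscr_X}(i_\star\mathscr{G}, f^!\mathscr{F}) = \mathrm{Hom}_{\oscr_Z}(g_\star\mathscr{G},\mathscr{F})$ unwinds to the identification of the leftmost term. The paper's version is shorter and more concrete; yours makes the role of adjunction explicit and would transfer verbatim to any setting where $i_\star$, $f_\star$, $g_\star$ have right adjoints, without needing the explicit formula $f^!\mathscr{F}=\underline{\mathrm{Hom}}(f_\star\oscr_X,\mathscr{F})$.
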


\begin{proof} Let us denote by $\mathscr{I}$ the ideal sheaf of $Y$ in $X$. The claim follows from the exact sequence:
$$ 0 \longrightarrow  \underline{\mathrm{Hom}}( g_\star \oscr_{Y}, \mathscr{F}) \longrightarrow   \underline{\mathrm{Hom}}( f_\star \oscr_{X}, \mathscr{F}) \longrightarrow   \underline{\mathrm{Hom}}( f_\star \mathscr{I}, \mathscr{F}).$$
\end{proof}

\subsection{Local finiteness and ordinary projectors} Let $R$ be a finite Artinian ring.

\begin{lem}\label{lem-existence-proj} Suppose $M$ is a finite $R$-module and $T\in\End_R(M)$.  The sequence $(T^{n!})_{n \in \mathbb{N}}$ is eventually constant and converges to an idempotent $e(T) \in \mathrm{End}_{R}(M)$.
\end{lem}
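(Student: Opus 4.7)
The plan is to exploit the hypothesis that $R$ is artinian local with \emph{finite} residue field $k$ in order to reduce everything to a finiteness statement about a finite monoid.

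First I would observe that $R$ itself is a finite ring. Indeed, since $R$ is artinian local, it has a finite composition series as a module over itself, and each graded piece is a finite-dimensional $k$-vector space; as $k$ is finite, so is $R$. It follows that any finitely generated $R$-module $M$ is a finite set, and hence $\End_R(M)$ is a finite set. In particular, the orbit $\{T^n : n \geq 0\}$ of $T$ in $\End_R(M)$ is finite, so there exist integers $n_0 \geq 0$ and $p \geq 1$ such that $T^{n+p} = T^n$ for every $n \geq n_0$. Consequently, for $m, m' \geq n_0$ with $m \equiv m' \pmod{p}$ one has $T^m = T^{m'}$.

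Next I would check that the subsequence $T^{n!}$ stabilizes. For $n \geq \max(n_0, p)$, we have $n! \geq n_0$ and $p \mid n!$, so $T^{n!} = T^{m}$ for any fixed $m \geq n_0$ with $p \mid m$. In particular $T^{n!}$ is independent of $n$ once $n$ is large enough; call this common value $e(T)$. To see it is idempotent, note that $e(T)^2 = T^{2 \cdot n!}$, and $2 \cdot n!$ is again $\geq n_0$ and divisible by $p$, so by the same congruence argument $T^{2 \cdot n!} = T^{n!} = e(T)$.

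There is no real obstacle here beyond unpacking the finiteness of $R$; the argument is purely a finite-monoid argument applied to the cyclic submonoid generated by $T$ in the finite monoid $\End_R(M)$. The only point worth flagging is the choice of the factorial exponent: it is used exactly so that $n!$ is eventually divisible by every fixed period $p$, which is what lets us pass from eventual periodicity of $(T^n)$ to eventual constancy of $(T^{n!})$ without having to know $p$ in advance.
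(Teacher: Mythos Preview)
Your proof is correct. Both your argument and the paper's ultimately rest on the same key fact---that $M$ is a finite set because $R$ is a finite ring---but you use it differently. The paper first observes that the decreasing chain of images $T^{n!}(M)$ stabilizes to some submodule $M_0$ (by the artinian property), then uses finiteness of $M_0$ to argue that $T^{n!}$ restricted to $M_0$ is a permutation which, for $n$ large, is the identity; this exhibits $e(T)$ concretely as the projector onto $M_0$. Your route is more direct: once $\End_R(M)$ is known to be finite, the eventual periodicity of $(T^n)$ and the stabilization of $(T^{n!})$ are pure finite-monoid facts, with no need to analyze the image filtration. The advantage of your approach is economy; the paper's approach, on the other hand, makes the structure of $e(T)$ (projection onto the eventual image) visible, which is useful for the later statements about $T$ being invertible on $e(T)M$ and locally nilpotent on $(1-e(T))M$.
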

\begin{proof} We have a decreasing sequence of submodules $T^n(M)$ of $M$ which is eventually stationary since $M$ is artinian. Let $M_0$ be the limit.  $T$ restricts to a bijection on the finite set $M_0$, and hence for $n$ large enough $T^{n!}$ is the identity on $M_0$.  It follows that for all $n$ large enough, $T^{n!}$ is an idempotent projector from $M$ to $M_0$. 
\end{proof}

\begin{defi} Let $M$ be an $R$-module and let $T \in \mathrm{End}_{R}(M)$. We say that $T$ is locally finite on $M$ if $M$ is a union of finite $R$-modules which are stable under $T$. 
\end{defi}

\begin{rem} This is equivalent to: for any finite submodule $V \subseteq M$ there exists an $n>0$ such that $T^n V \subseteq \sum_{i=0}^{n-1} T^i V$. 
\end{rem}

Let $M$ be an $R$-module and let $T\in\mathrm{End}_R(M)$ be locally finite.  Then for each $v\in M$, Lemma~\ref{lem-existence-proj} implies that the sequence $T^{n!}v$ is eventually constant, and we define $e(T):M\to M$ by $e(T)v=T^{n!}v$ for $n$ sufficiently large.

Lemma~\ref{lem-existence-proj} further implies that $e(T)$ is an $R$-linear idempotent which commutes with $T$, and moreover $T$ is an isomorphism on $e(T)M$ and locally nilpotent on $(1-e(T))M$, in the sense that for each $v \in (1-e(T))M$, there exists $n>0$ such that $T^n v = 0$. 

If $f:M \rightarrow N$ is a morphism of $R$-modules, equivariant for a locally finite endomorphism $T$ (\textit{i.e.} we have locally finite endomorphisms $T\in\mathrm{End}_R(M)$ and $T\in\mathrm{End}_R(N)$ with $fT=Tf$) then we have $e(T)f=fe(T)$ and so $f(e(T)M)\subseteq e(T)N$ and $f((1-e(T))M)\subseteq (1-e(T))N$.

\begin{prop}\label{prop-disc-exact} Let $0 \rightarrow M \rightarrow N \rightarrow L \rightarrow 0$ be a short exact sequence of $R$-modules with an equivariant endomorphism $T$.
\begin{enumerate}
\item $T$ is locally finite on $N$ if and only if $T$ is locally finite on $M$ and $L$. 
\item If this holds, then $0 \rightarrow e(T) M \rightarrow e(T)N \rightarrow e(T)L \rightarrow 0$ is exact.
\end{enumerate}
\end{prop}

\begin{proof} The only point that requires a proof is that if $T$ is locally finite on $M$ and $L$, then it is locally finite on $N$.  Let $V \subseteq N$ be a finite submodule. Since $T$ is locally finite on $L$, there exists $n \geq 1$ such that $ T^n(V) \subseteq \sum_{i=0}^{n-1} T^iV + M$.  There is furthermore a finite submodule $W\subseteq M$ such that $$T^n(V)\subseteq\sum_{i=0}^{n-1}T^iV+W$$

Since $T$ is locally finite on $M$, there exists $m\geq 1$ such that:
$$ T^m(W) \subseteq \sum_{i=0}^{m-1}T^{i} W.$$
We finally conclude that $T^{n+m}(V+W) \subseteq \sum_{i=0}^{n+m-1} T^i(V +W)$, so that $\sum_{i=0}^{n+m-1} T^i (V +W)$ is a finite $R$-module, stable by $T$, containing $V$. 
\end{proof}

\bigskip

We will also need to work with a different notion of local finiteness, defined on certain topological $R$-modules.

\begin{defi}
A topological $R$-module $M$ is a profinite $R$-module if it is homeomorphic to a cofiltered limit $\lim_{i\in I}M_i$ of finite $R$-modules, or equivalently, if it is Hausdorff and has a basis of neighborhoods $\{N_i\}_{i\in I}$ of 0 consisting of open cofinite submodules.
\end{defi}

\begin{defi} Let $M$ be a profinite $R$-module, and let $T \in \mathrm{End}_{R}(M)$ be a continuous endomorphism. We say that $T$ is locally finite on $M$ if $M$ has a basis of neighborhoods of $0$ consisting of submodules $\{N_i\}$ such that  $T(N_i) \subseteq N_i$.
\end{defi}

\begin{rem} An endomorphism $T\in\mathrm{End}_R(M)$ is locally finite if and only if for any open submodule $V\subseteq M$, $\cap_{i=0}^\infty T^{-i}(V)\subseteq M$ is open.  Equivalently, it is locally finite if and only if for every open submodule $V\subseteq M$, there is some $n>0$ such that $\cap_{i=0}^{n-1}T^{-i}(V)\subseteq T^{-n}(V)$ (indeed, if $\cap_{i=0}^\infty T^{-i}(V)\subseteq M$ is open, then it is cofinite, and hence must be $\cap_{i=0}^{n-1} T^{-i}(V)$ for some $n>0$.)
\end{rem}

Let $M$ be a profinite $R$-module and let $T\in\mathrm{End}_R(M)$ be a locally finite endomorphism.  Pick a neighborhood basis $\{N_i\}_{i\in I}$ of 0 with $T(N_i)\subseteq N_i$.  Then for all $v\in M$ and each $i\in I$, Lemma~\ref{lem-existence-proj} implies that the sequence $T^{n!}v$ is eventually constant in $M/N_i$.  It follows that the sequence $T^{n!}v$ converges in $M$, to a limit that we denote $e(T)v$.

Lemma~\ref{lem-existence-proj} further implies that $e(T)$ is a continuous, $R$-linear idempotent which commutes with $T$, and moreover $T$ is an isomorphism on $e(T)M$ and topologically nilpotent on $(1-e(T))M$, in the sense that for each $v \in (1-e(T))M$, $\{T^nv\}_{n\in\mathbb{N}}$ converges to 0.

If $f:M \rightarrow N$ is a continuous morphism of profinite $R$-modules, equivariant for a locally finite endomorphism $T$ then we have $e(T)f=fe(T)$ and so $f(e(T)M)\subseteq e(T)N$ and $f((1-e(T))M)\subseteq (1-e(T))N$.

 \begin{prop}\label{prop-pro-exact} Let $0 \rightarrow M \rightarrow N \rightarrow L \rightarrow 0$ be a short exact sequence of profinite $R$-modules with an equivariant continuous endomorphism $T$. 
\begin{enumerate}
\item $T$ is locally finite on $N$ if and only if $T$ is locally finite on $M$ and $L$. 
\item If this holds, then $0 \rightarrow e(T) M \rightarrow e(T)N \rightarrow e(T)L \rightarrow 0$ is exact.
\end{enumerate}
\end{prop}
\begin{proof} The only point that requires a proof is that if $T$ is locally finite on $M$ and $L$, then it is locally finite on $N$.  Let $V \subset N$ be an open submodule. 
Since $T$ is locally finite on $M$, we deduce that there is $n \geq 1$ such that $\cap_{i=0}^{n-1} T^{-i} (V\cap M) \subseteq T^{-n}(V \cap M)$.  It follows that:
$$ (T^{-n} V + M) \bigcap \cap_{i=0}^{n-1} T^{-i} (V) \subseteq T^{-n}(V).$$
To shorten notations, let us denote by $W = T^{-n} V +M$. Since $W$ is open in $N$, its image $\overline{W}$ in $L$ is open (it is closed and finite index.) Since $T$ is locally finite on $L$, there is $m \geq 0$ such that $\cap_{i=0}^{m-1} T^{-i} (\overline{W}) \subseteq T^{-m}(\overline{W}) $. We deduce that $\cap_{i=0}^{m-1} T^{-i} ({W}) \subseteq T^{-m}({W}) $.
It follows that: 
\begin{eqnarray*}
T^{-(n+m)} (V \cap W) &=& T^{-m}(T^{-n} V) \cap T^{-(n+m)}(W) \\
&\supseteq & T^{-m} (W\bigcap \cap_{i=0}^{n-1} T^{-i} (V)) \bigcap \cap_{i=0}^{m-1} T^{-i} W \\
& \supseteq & T^{-m} (W) \bigcap \cap_{i=0}^{m+n-1} T^{-i} (V) \bigcap \cap_{i=0}^{m-1} T^{-i} W \\
& \supseteq &  \cap_{i=0}^{m+n-1} T^{-i} (V) \bigcap \cap_{i=0}^{m-1} T^{-i} W\\
& \supseteq &  \cap_{i=0}^{m+n-1} T^{-i} (V\cap W) 
\end{eqnarray*}
Therefore, $\cap_{i=0}^{m+n-1} T^{-i} (V\cap W)$ is an open submodule of $V$ which is stable under $T$.
\end{proof}

We will also consider the situation that $R=\lim_n R/\mathfrak{m}^n$ is a complete, semilocal, noetherian ring with Jacobson radical $\mathfrak{m}$, and $R/\mathfrak{m}$ is finite (\textit{e.g.} $R=\ZZ_p$ or $\ZZ_p[[\ZZ_p^\times]]$).  If $M=\lim_n M/\mathfrak{m}^nM$ is a complete $R$-module with an endomorphism $T$, we say that $T$ is locally finite if for all $n$ (or equivalently for a single $n$ by Propositions~\ref{prop-disc-exact} and~\ref{prop-pro-exact}) the induced endomorphism $T$ of $M/\mathfrak{m}^nM$ is locally finite in one of the two senses considered above.  In this case the idempotent endomorphisms $e(T)$ of $M/\mathfrak{m}^nM$ are compatible and define an idempotent endomorphism $e(T)$ of $M$.

\section{The cohomological correspondence $T_p$}

Let $N\geq 3$ be an integer and let $p$ be a prime.  Let $X \rightarrow \Spec\ZZ_p$ be the compactified  modular curve of level $\Gamma_1(N)$ (\cite{DR}). This is a proper smooth relative curve.  Denote by $D$ the boundary divisor, and by $E \rightarrow X$ the semi-abelian scheme which extends the universal elliptic curve and denote by $e$ the unit section.  We let $\omega_{E} = e^\star \Omega^1_{E/X}$.  For any $k \in \ZZ$, we denote by  $\omega^k = \omega_E^{\otimes k}$.

\subsection{The cohomological correspondences $T_p$}\label{section-T_p} 
We denote by $p_1, p_2 : X_0(p) \rightarrow X$ the Hecke correspondence which parametrizes an isogeny  $\pi : p_1^\star E \rightarrow p_2^\star E$  of degree $p$ (compatible with the $\Gamma_1(N)$ level structure). We denote by $D_0(p)$ the boundary divisor in $X_0(p)$ (which is reduced, so that $D_0(p) = (p_i^{-1} D)_{\mathrm{red}}$).  We let $\pi_k : p_2^\star \omega^k \dashrightarrow p_1^\star \omega^k$ be the rational map  which is deduced from the pull-back map on differentials $p_2^\star \omega_E \rightarrow p_1^\star \omega_E$ (this map is regular if $k \geq 0$, and is an isomorphism over $\qq_p$ for all $k$). We also denote by $\pi_k^{-1} : p_1^\star \omega^k \dashrightarrow p_2^\star \omega^k$ the inverse of $\pi_k$. 
We also have a dual isogeny $\pi^\vee : p_2^\star E \rightarrow p_1^\star E$ and we denote by $\pi^\vee_k : p_1^\star \omega^k \dashrightarrow p_2^\star \omega^k$ the rational map which is deduced from the pull-back map on differentials $p_1^\star \omega_E \rightarrow p_2^\star \omega_E$. We also denote by $(\pi^\vee_k)^{-1} : p_2^\star \omega^k \dashrightarrow p_1^\star \omega^k$ the inverse of $\pi^\vee_k$.  We have the following formula relating $\pi_k$ and $\pi^\vee_k$:
$$ \pi_k  \circ \pi^\vee_k = p^k \mathrm{Id}.$$

We have natural trace maps $ \mathrm{tr}_{p_1} : \oscr_{X_0(p)} \rightarrow p_1^! \oscr_{X}$ and $ \mathrm{tr}_{p_2} : \oscr_{X_0(p)} \rightarrow p_2^! \oscr_{X}$.  Note that since $p_1$ and $p_2$ are local complete intersection morphisms, $p_1^! \oscr_X$ and $p_2^! \oscr_X$ are invertible sheaves. 

\begin{lem} The restrictions of $\mathrm{tr}_{p_1}$ and $\mathrm{tr}_{p_2}$ to $\oscr_{X_0(p)}(-D_0(p))$ factor through maps $ \mathrm{tr}_{p_1} : \oscr_{X_0(p)}(-D_0(p)) \rightarrow p_1^! (\oscr_{X}(-D))$ and $ \mathrm{tr}_{p_2} : \oscr_{X_0(p)}(-D_0(p)) \rightarrow p_2^! (\oscr_{X}(-D))$.
\end{lem}

\begin{proof} The boundary divisors in $X$ and $X_0(p)$ are reduced. The lemma boils down to the statement that the trace of a function vanishing along the boundary on $X_0(p)$ vanishes on the boundary on $X$.
\end{proof}

\medskip

We have a ``naive'' (\textit{i.e.} unnormalized) cohomological correspondence:
\[\begin{tikzcd}[column sep=normal]
T^{\naive}_{p,k} :  p_2^\star \omega^k \ar[r,dashed] & p_1^! \omega^k
\end{tikzcd}\]
which is the rational map defined by taking the tensor product of the map $\pi_k:p_2^\star \omega^k \dashrightarrow p_1^\star \omega^k$ and the map $ \mathrm{tr}_{p_1} : \oscr_{X_0(p)} \rightarrow p_1^! \oscr_{X}$, and similarly a map $ T^{\naive}_{p,k} :  p_2^\star (\omega^k(-D) )\dashrightarrow p_1^! (\omega^k(-D))$ defined using $p_2^\star(\omega^k(-D))=(p_2^\star\omega^k)(-p_2^{-1}(D))\subseteq (p_2^\star\omega^k)(-D_0(p))$.  We finally let $T_{p,k} = p^{-\inf\{1,k\}} T^{\naive}_{p, k}$. 

\begin{prop}\label{prop-T_p-constructed} $T_{p, k}$ is a cohomological correspondence $p_2^\star \omega^k  \rightarrow p_1^! \omega^k$. 
\end{prop}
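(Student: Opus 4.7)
The plan is to verify the proposition by a divisor computation on the regular Deligne--Rapoport model of $X_0(p)$. On this model, the special fiber $X_0(p)_{\F_p}$ is a reduced Cartier divisor with two smooth irreducible components $X^m$ and $X^{et}$ (the multiplicative and \'etale loci of the universal subgroup) meeting transversally at the supersingular points; in particular, $(p) = [X^m] + [X^{et}]$ as Cartier divisors on $X_0(p)$. The proposition then amounts to checking that the divisor of $T^{naive}_{p,k} = \pi_k \otimes \mathrm{tr}_{p_1}$, viewed as a section of the invertible sheaf $\underline{\mathrm{Hom}}(p_2^\star \omega^k, p_1^! \omega^k)$, dominates $\inf\{1,k\}\cdot(p)$.

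The key computation is the pair of divisor identities $(\pi_k) = k[X^m]$ (with the convention that negative multiplicities indicate poles) and $(\mathrm{tr}_{p_1}) \geq [X^{et}]$. For the first, the relation $\pi_k \circ \pi^\vee_k = p^k \mathrm{Id}$ gives $(\pi_k) + (\pi^\vee_k) = k[X^m] + k[X^{et}]$; on $X^m$ the isogeny $\pi$ is the relative Frobenius $E \to E/\mu_p = E^{(p)}$ and hence $\pi_1$ annihilates differentials, so $(\pi_1) \geq [X^m]$, and symmetrically $(\pi^\vee_1) \geq [X^{et}]$, which combined with the sum identity forces equalities, hence the formula for all $k$ by tensoring. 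For the second, at the generic point of $X^{et}$ the map $p_1$ is a purely inseparable degree-$p$ extension of DVRs (consistent with the total degree $p+1 = 1+p$ split across the two components), whose residue-field extension is a relative Frobenius; the trace of such an extension lands in the maximal ideal $(p)$ of the base because all $p$ residue-field roots coincide modulo $p$, so $\mathrm{tr}_{p_1}$ is divisible by the uniformizer of $X^{et}$.

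Combining case by case: for $k \geq 1$ the divisor of $T^{naive}_{p,k}$ dominates $k[X^m] + [X^{et}] \geq (p)$, so $T^{naive}_{p,k}$ is divisible by $p$ and $T_{p,k} = p^{-1}T^{naive}_{p,k}$ is a morphism; for $k = 0$, $T^{naive}_{p,0} = \mathrm{tr}_{p_1}$ is tautologically a morphism; for $k \leq -1$ the factor $p^{-k}$ contributes $(-k)([X^m]+[X^{et}])$, which cancels the pole $(-k)[X^m]$ of $\pi_k$ and leaves the effective divisor $(-k)[X^{et}] + (\mathrm{tr}_{p_1}) \geq 0$. The most delicate step is the local analysis at supersingular points, where $X^m$ and $X^{et}$ meet at an ordinary double point: to confirm both the identity $(p) = [X^m]+[X^{et}]$ and that the generic-point vanishing orders propagate through the node, I would invoke the Serre--Tate description of the completed local ring of $X_0(p)$ at such a point as $W(\overline{\F}_p)[[x,y]]/(xy - p)$, which makes the relation $p = xy$ between the local equations of the two branches explicit.
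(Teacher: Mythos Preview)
Your argument is correct and follows essentially the same route as the paper: both proofs reduce to checking the map is regular at the two generic points of the special fiber of $X_0(p)$, and both compute that the differential $\pi_1$ contributes a factor of $p$ at the multiplicative component while the trace $\mathrm{tr}_{p_1}$ contributes a factor of $p$ at the \'etale component. Your divisorial bookkeeping $(\pi_k)=k[X^m]$, $(\mathrm{tr}_{p_1})\geq[X^{et}]$ is just a repackaging of the paper's local statements $(T^{naive}_{p,k})_\xi:(p_2^\star\omega^k)_\xi\tilde\rightarrow p^k(p_1^!\omega^k)_\xi$ at multiplicative $\xi$ and $\tilde\rightarrow p(p_1^!\omega^k)_\xi$ at \'etale $\xi$.

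One remark: your final paragraph about supersingular points is unnecessary. Since $X_0(p)$ is a regular scheme and $T_{p,k}$ is a rational map between invertible sheaves, regularity in codimension~$1$ (i.e.\ at the generic points of the special fiber, which lie in the ordinary locus) automatically implies regularity everywhere---this is the algebraic Hartogs lemma for normal schemes. The paper invokes exactly this principle (``it is enough to work in codimension~$1$'') and never touches the supersingular locus. So you do not need the Serre--Tate description, and the phrase ``the most delicate step'' is misplaced: there is no step at all at the nodes. The identity $(p)=[X^m]+[X^{et}]$ is likewise immediate from the reducedness of the Deligne--Rapoport special fiber, without any local computation.
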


\begin{proof} The map $T_{p, k}$ is a rational map between invertible sheaves over the regular scheme $X_0(p)$ so we can check that it is defined outside of codimension 2.  Since the map is defined over $\qq_p$, we can thus localize at a generic point $\xi$ of the special fiber and we are left to prove that it is well defined locally at these points. There are two types of generic points  corresponding to the possibility that the isogeny $p_1^\star E \rightarrow p_2^\star E$ is either multiplicative or \'etale. Let us first assume that $\xi$ is \'etale. 
The differential  map $(p_2^\star \omega)_{\xi} \tilde{\rightarrow} (p_1^\star \omega)_{\xi}$ is an isomorphism and the map $(tr_{p_1})_\xi : (p_1^\star \oscr_{X})_\xi \rightarrow (p_1^! \oscr_{X})_{\xi}$ factors into an isomorphism: $(tr_{p_1})_\xi : (p_1^\star \oscr_{X})_\xi \tilde{\rightarrow} p(p_1^! \oscr_{X})_{\xi}$.
It follows that $ (T^{\naive}_{p,k})_\xi  :  (p_2^\star \omega^k)_\xi \tilde{\rightarrow} p (p_1^! \omega^k)_\xi$.
Let us next assume that $\xi$ is multiplicative. The differential  map $(p_2^\star \omega)_{\xi} \tilde{\rightarrow} (p_1^\star \omega)_{\xi}$ factors into an isomorphism  $(p_2^\star \omega)_\xi \tilde{\rightarrow} p (p_1^! \omega)_\xi$ and the map $(tr_{p_1})_\xi : (p_1^\star \oscr_{X})_\xi \tilde{\rightarrow} (p_1^! \oscr_{X})_{\xi}$ is an isomorphism. It follows that $ (T^{\naive}_{p,k})_\xi  :  (p_2^\star \omega^k)_\xi \tilde{\rightarrow} p^k (p_1^! \omega^k)_\xi$. We deduce that $T_{p,k}$ is indeed a well defined map and that it is optimally integral. 
\end{proof} 

\bigskip

When the weight is clear, we often write $T_{p}$. The map $T_{p}$ induces a map on cohomology: 
$$ T_p \in \mathrm{End}\mathrm{R\Gamma}(X, \omega^k)~~~\textrm{and} ~~~\mathrm{End}\mathrm{R\Gamma}(X, \omega^k(-D))$$
obtained by composing the maps: 
$$ \mathrm{R}\Gamma(X, \omega^k) \xrightarrow{~p_2^\star~} \mathrm{R}\Gamma(X_0(p), p_2^\star\omega^k) \xrightarrow{~T_p~} \mathrm{R}\Gamma(X_0(p), p_1^!\omega^k) \xrightarrow{~\mathrm{tr}_{p_1}} \mathrm{R}\Gamma(X, \omega^k)$$
and similarly for cuspidal cohomology.

\begin{rem} Proposition~\ref{prop-T_p-constructed} is a particular instance of constructions performed in \cite{F-Pilloni}, where the problem of constructing Hecke operators on the integral coherent cohomology of more general Shimura varieties is considered. 
\end{rem}

\begin{rem}\label{rem-$q$-expansion} One can check that our map $T_{p, k}$ has the following effect on $q$-expansions (of given Nebentypus $\chi : \ZZ/N\ZZ^\times \rightarrow \overline{\ZZ}_p^\times$): it maps $\sum a_n q^n $ to $\sum a_{np} q^n + p^{k-1} \chi(p) \sum a_n q^{np}$ if $k \geq 1$ and to $p^{1-k}\sum a_{np} q^n +  \chi(p)\sum a_n q^{n p}$  if $k \leq 1$.
\end{rem}
\begin{rem} Our normalization is consistent with standard conjectures on the existence and properties  of Galois representations associated to automorphic forms (\cite{MR3444225}). The cohomology groups $\mathrm{H}^i(X, \omega^k) \otimes \C$ can be computed using automorphic forms and for any $\pi = \pi_\infty \otimes \pi_f$ contributing to the cohomology, we find that the infinitesimal character of $\pi_\infty$ is  given by $(t_1, t_2) \mapsto  t_1^{\frac{1}{2}} t_2^{k- \frac{1}{2}}$ and is indeed $C$-algebraic. By the Satake isomorphism, we know that $T^{\naive}_p \vert \pi_p = p^{1/2} \mathrm{Trace} ( \mathrm{Frob}^{-1}_p \vert \mathrm{rec} (\pi_p))$ (because $T_p^{\naive}$ corresponds to the co-character $t \mapsto (1, t^{-1})$  \cite[Rem. 5.6]{F-Pilloni}!). 
It is convenient to introduce the twist $\pi \otimes \vert \cdot\vert^{-\frac{1}{2}}$ which is $L$-algebraic, for which we find that the infinitesimal character of $\pi_\infty \otimes \vert \cdot\vert^{-\frac{1}{2}}$  is $(t_1, t_2) \mapsto   t_2^{k- 1}$. We make the following normalizations: the Hodge-Tate weight of the cyclotomic character is $-1$, and we normalize the reciprocity law by using geometric Frobenii. With these conventions,  the Hodge cocharacter is $t \mapsto ( 1, t^{1- k})$ and the corresponding Hodge polygon has slopes $1-k$ and $0$. We find that $T^{\naive}_p\vert \pi_p = p \mathrm{Trace} \big( \mathrm{Frob}^{-1}_p \vert \mathrm{rec} (\pi_p \otimes \vert\cdot \vert^{-\frac{1}{2}})\big)$.   
The Katz-Mazur inequality predicts that the Newton polygon (which has slopes the $p$-adic valuations of two eigenvalues of $\mathrm{Frob}_p$) is above the Hodge polygon with same ending and initial point, from which we find that $v(T^{\naive}_p \vert \pi_p) \geq  \inf\{1, k\}$ and that $T_p$ is indeed optimally integral. 
\end{rem}

 \subsection{Duality} We let $\DD_{\ZZ_p}  = \mathrm{RHom}_{\ZZ_p}( - , \ZZ_p)$ be the dualizing functor on the category of bounded complexes of  finite type $\ZZ_p$-modules \cite[Chapter~V]{Hartshorne}.  We let $\omega_{X/\ZZ_p}$ and $\omega_{X_0(p)/\ZZ_p}$ be the dualizing modules.  We recall that $\omega_{X/\ZZ_p}=\Omega^1_{X/\ZZ_p}$, while $\omega_{X_0(p)/\ZZ_p}=j_\star\Omega^1_{U/\ZZ_p}$, where $j:U\to X_0(p)$ is the complement of the supersingular locus in the special fiber, which is also the smooth locus of $X_0(p)\to \Spec(\ZZ_p)$.  Then we have dualizing functors $\DD_X=\mathrm{R\underline{Hom}}(- ,\omega_{X/\ZZ_p}[1])$ and $\DD_{X_0(p)}=\mathrm{R\underline{Hom}}(-,\omega_{X_0(p)/\ZZ_p}[1])$ on the derived category of bounded complexes of coherent sheaves on $X$ and $X_0(p)$. When the context is clear we only write $\DD$ for any of these dualizing functors.
 

We have the following Serre duality isomorphism  (\cite[Chapter~III, Theorem~11.1]{Hartshorne}):  $$\DD_{\ZZ_p}(\mathrm{R}\Gamma(X,\omega^k)) = \mathrm{R}\Gamma(X,\DD_X(\omega^k))$$ and similarly for cuspidal cohomology.  

We now want to understand how this duality isomorphism behaves with respect to Hecke operators. The Hecke $T_p$ is defined as a composition 
  $$ \mathrm{R}\Gamma(X, \omega^k) \xrightarrow{~p_2^\star~} \mathrm{R}\Gamma(X_0(p), p_2^\star \omega^k) \xrightarrow{~T_p~} \mathrm{R}\Gamma(X_0(p), p_1^!\omega^k) \xrightarrow{~\mathrm{tr}_{p_1}} \mathrm{R}\Gamma(X, \omega^k)$$
and hence dualizes to a composition: 
  $$\DD(\mathrm{R}\Gamma(X, \omega^k)) \xrightarrow{~p_1^\star~} \DD(\mathrm{R}\Gamma(X_0(p), p_1^!\omega^k) ) \xrightarrow{\DD(T_p)} \DD(\mathrm{R}\Gamma(X_0(p), p_2^\star\omega^k)) \xrightarrow{~\mathrm{tr}_{p_2}} \DD(\mathrm{R}\Gamma(X, \omega^k)).$$
  
  We have $$\DD(\mathrm{R}\Gamma(X_0(p), p_1^!\omega^k) ) = \mathrm{R}\Gamma((X_0(p), p_1^\star \DD(\omega^k)),$$ $$\DD(\mathrm{R}\Gamma(X_0(p), p_2^\star\omega^k) ) = \mathrm{R}\Gamma((X_0(p), p_2^! \DD(\omega^k)),$$
according to \cite[Chapter~III, Theorem~11.1 and Chapter~V, Proposition~8.5]{Hartshorne}, 
  and it remains to understand $\DD(T_p) : p_1^\star \DD(\omega^k) \rightarrow p_2^! \DD(\omega^k)$. We first  recall that we have the Kodaira--Spencer isomorphism over $X$ \cite[A.1.3.17]{MR0447119} $$KS : \omega^2 (-D) \rightarrow \Omega^1_{X/\ZZ_p}.$$
  We consider  the correspondence $T_p  : p_2^\star \omega^k \rightarrow p_1^! \omega^k$. Applying the functor $\DD_{X_0(p)}$ and making a $[-1]$-shift yields a map $\DD(T_p)  : p_1^\star (\omega^{-k} \otimes \omega_{X/\ZZ_p})  \rightarrow p_2^! (\omega^{-k} \otimes \omega_{X/\ZZ_p})$. 
If we use the Kodaira--Spencer isomorphism on both sides, we obtain a map: 
$\DD(T_p)  : p_1^\star (\omega^{-k+2}(-D))  \rightarrow p_2^! (\omega^{-k+2}(-D))$.

We can also consider the transpose correspondence $T_p^t:p_1^\star(\omega^k(-D))  \rightarrow p_2^! (\omega^k(-D))$, defined as $p^{-\inf\{1,k\}}T_p^{t,\naive}$, where $T_p^{t,\naive}$ is the tensor product of $\pi_k^\vee:p_1^\star\omega^k\dashrightarrow p_2^\star\omega^k$ and $tr_{p_1}:\oscr_{X_0(p)}(-D_0(p))\to p_1^!(\oscr_X(-D))$.  We can consider the diamond operator $\langle p\rangle$ on $X$ and $X_0(p)$ which multiplies the $\Gamma_1(N)$ level structure by $p$, as well as the Atkin--Lehner map $w:X_0(p)\to X_0(p)$, which sends the isogeny $\pi:p_1^\star E\to p_2^\star E$ to the dual isogeny $\pi^\vee:p_2^\star E\to p_1^\star E$.  We have the formulas $w^2=\langle p\rangle$, $p_1w=p_2$, and $p_2w=\langle p\rangle p_1$, and pulling back by $w$, we see that $T_p^t$ acts on cohomology as $\langle p\rangle^{-1}T_p$.

The main result of this section is now the following: 
\begin{prop}\label{prop-T_p-selfdual} We have $\DD(T_p) =  T_p^t$ as maps $p_1^\star(\omega^{-k+2}(-D))\to p_2^!(\omega^{-k+2}(-D))$, and hence we have $\DD(T_p)=\langle p\rangle^{-1} T_p$ as endomorphisms of $\DD_{\ZZ_p}(\mathrm{R}\Gamma(X,\omega^k))=\mathrm{R}\Gamma(X,\omega^{2-k}(-D))$.
\end{prop}

  \begin{lem}\label{lem-KS} The following diagram is commutative: 
\[
  \xymatrix{  & \omega_{X_0(p)/\ZZ_p}(D_0(p)) & \\
  p_2^\star \Omega^{1}_{X/\ZZ_p}(D)  \ar[ur]^{\mathrm{tr}_{p_2}} & &  p_2^\star \Omega^{1}_{X/\ZZ_p}(D) \ar[ul]_{\mathrm{tr}_{p_1}} \\
  p_2^\star \omega^2 \ar[u]^{p_2^\star KS} \ar[rr]^{  p^{-1}\pi_2} &&  p_1^\star \omega^2 .\ar[u]_{p_1^\star KS}}
\]
  \end{lem}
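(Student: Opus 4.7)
My plan is to reduce the assertion to a local computation at the multiplicative cusp of $X_0(p)$, using the Tate curve uniformization. All six sheaves in the diagram are invertible on the regular $2$-dimensional scheme $X_0(p)$, so each arrow is a section of an appropriate line bundle, and two such arrows agree once they agree on any dense open subset. Since $X_0(p)$ is $\ZZ_p$-flat with connected generic fiber, it suffices to verify commutativity in a formal neighborhood of a single convenient closed point of $X_0(p)_{\qq_p}$; the cusp $c_m$ above which the universal subgroup is identified with $\mu_p$ via the Tate curve is the natural choice.

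In Tate coordinates near $c_m$, let $q_X, q_0$ denote formal parameters of $X$ and $X_0(p)$. The moduli description gives $p_1^\star q_X=q_0$ (so $p_1$ is locally an isomorphism) and $p_2^\star q_X=q_0^p$ (so $p_2$ is the $p$-th power map), while the universal isogeny $\pi:p_1^\star E\to p_2^\star E$ is the quotient by $\mu_p$, realized in the ambient $\mathbb{G}_m$ as $t\mapsto t^p=s$. This gives $\pi_1(ds/s)=p\,dt/t$ and $\pi_2((ds/s)^{\otimes 2})=p^2(dt/t)^{\otimes 2}$. The Kodaira--Spencer map on $X$ takes the standard form $(dt/t)^{\otimes 2}\mapsto dq_X/q_X$. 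The identification of $\mathrm{tr}_{p_i}$ with the pullback of differential forms, under the canonical isomorphism $p_i^!\Omega^1_X(\log D)\cong\Omega^1_{X_0(p)/\ZZ_p}(\log(SS+D))$, can be unwound by exactly the generic-point analysis already used to prove Proposition~\ref{prop-T_p-constructed}: $\mathrm{tr}_{p_2}$ sends $p_2^\star(dq_X/q_X)$ to $d(q_0^p)/q_0^p=p\,dq_0/q_0$, and $\mathrm{tr}_{p_1}$ sends $p_1^\star(dq_X/q_X)$ to $dq_0/q_0$.

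With these ingredients, both legs of the diagram send the generator $(ds/s)^{\otimes 2}$ of $p_2^\star\omega^2$ to $p\,dq_0/q_0$: the left leg directly via $\mathrm{tr}_{p_2}\circ p_2^\star KS$, and the right leg via $(ds/s)^{\otimes 2}\mapsto p\,(dt/t)^{\otimes 2}\mapsto p\,p_1^\star(dq_X/q_X)\mapsto p\,dq_0/q_0$. Conceptually, the content of the lemma is the functoriality of Kodaira--Spencer under the degree-$p$ isogeny $\pi$; the factor $p^{-1}$ precisely compensates for the fact that the two pulled-back families $p_1^\star E$ and $p_2^\star E$ are parametrized by $q_0$ and $q_0^p$ respectively, which introduces an extra factor of $p$ in the Kodaira--Spencer map for $p_2^\star E$. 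I expect the main obstacle to be setting up the correct canonical identification $p_i^!\Omega^1_X(\log D)\cong\Omega^1_{X_0(p)/\ZZ_p}(\log(SS+D))$ so that $\mathrm{tr}_{p_i}$ literally becomes the differential pullback of forms; this is a standard consequence of Grothendieck--Serre duality for Gorenstein morphisms but needs to be tracked carefully at $SS$ and at ramified cusps to avoid spurious factors of $p$.
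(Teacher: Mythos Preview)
Your proof is correct and takes a genuinely different route from the paper's argument. The paper proceeds conceptually: it invokes the map $\pi^\star$ on relative de Rham cohomology $p_2^\star(\mathcal{H}^1_{dR}(E/X),\nabla)\to p_1^\star(\mathcal{H}^1_{dR}(E/X),\nabla)$, observes that the Kodaira--Spencer map is built from the Gauss--Manin connection, and deduces commutativity from the compatibility of $\pi^\star$ with $\nabla$ and with the Hodge filtration. Unwinding this yields a version of the diagram with $\pi_1\otimes(\pi_1^\vee)^{-1}$ along the bottom, and the factor $p^{-1}$ then drops out of the identity $\pi_1^\vee\pi_1=p$. By contrast, you reduce to an integrality argument (two sections of a line bundle on the regular, $\ZZ_p$-flat, generically irreducible scheme $X_0(p)$ coincide once they agree in a single formal neighborhood on the generic fiber) and then carry out an explicit $q$-expansion check at the multiplicative cusp.

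Both approaches have their merits. The paper's Gauss--Manin argument is coordinate-free and explains \emph{why} the diagram commutes (functoriality of KS under isogeny); it also generalizes directly to higher-dimensional Shimura varieties where no convenient cusp uniformization is available. Your $q$-expansion computation is more elementary and entirely self-contained, and it makes the factor $p^{-1}$ transparent as the ratio between $d(q_0^p)/q_0^p$ and $\pi_2$ applied to the canonical generator. The point you flag as the main obstacle---that under the canonical isomorphism $p_i^!\Omega^1_X(\log D)\cong\Omega^1_{X_0(p)/\ZZ_p}(\log(SS+D))$ the trace map becomes the usual pullback of differential forms---is correct and standard for finite morphisms between regular schemes; once this is accepted, your computation is complete as written.
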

\begin{proof} Over $X_0(p)$ we have a map $\pi^\star : p_2^\star( \mathcal{H}^1_{dR} (E/X), \nabla) \rightarrow p_1^\star( \mathcal{H}^1_{dR} (E/X), \nabla)$ which induces a commutative diagram: 
\[
  \xymatrix{  \omega_{X_0(p)/\ZZ_p}(D_0(p))  \otimes p_2^\star \omega^{-1} \ar[rr]^{1 \otimes (\pi^\vee_{-1})^{-1}} & & \omega_{X_0(p)/\ZZ_p}(D_0(p)) \otimes p_1^\star \omega^{-1}   \\
  p_2^\star \Omega^{1}_{X/\ZZ_p}(D) \otimes p_2^\star \omega^{-1} \ar[u]^{\mathrm{tr}_{p_2}}  &&   p_1^\star \Omega^{1}_{X/\ZZ_p}(D) \otimes p_1^\star \omega^{-1}   \ar[u]_{\mathrm{tr}_{p_1}} \\
  p_2^\star \omega \ar[u]^{p_2^\star KS} \ar[rr]^{ \pi_1 } &&  p_1^\star \omega \ar[u]_{p_1^\star KS}}
\]
  or equivalently 
\[
  \xymatrix{  & \omega_{X_0(p)/\ZZ_p}(D_0(p)) & \\
  p_2^\star \Omega^{1}_{X/\ZZ_p}(D)  \ar[ur]^{\mathrm{tr}_{p_2}} & &  p_2^\star \Omega^{1}_{X/\ZZ_p}(D) \ar[ul]_{\mathrm{tr}_{p_1}} \\
  p_2^\star \omega^2 \ar[u]^{p_2^\star KS} \ar[rr]^{ \pi_1 \otimes (\pi_1^\vee)^{-1}} &&  p_1^\star \omega^2 .\ar[u]_{p_1^\star KS}}
\]
It remains to observe that $\pi_1^\vee \pi_1 = p$. 
\end{proof}

\begin{lem} $\DD(T_p^{\naive}) = p^{k-1} T^{t,\naive}_p$. 
\end{lem}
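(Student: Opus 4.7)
I would prove this by computing $D(T_p^{naive})$ from the natural factorization $T_p^{naive} = (\otimes \mathrm{tr}_{p_1}) \circ \pi_k$ and comparing with $T_{p,2-k}^{naive}$. Precisely, I would decompose $T_p^{naive}$ as
$$p_2^\star \omega^k \xrightarrow{\pi_k} p_1^\star \omega^k \xrightarrow{\otimes \mathrm{tr}_{p_1}} p_1^! \omega^k,$$
apply $D_{X_0(p)}$ using the identifications $D_{X_0(p)} p_2^\star = p_2^! D_X$ and $D_{X_0(p)} p_1^! = p_1^\star D_X$ from \cite[V, Prop.~8.5]{Hartshorne}, then use the Kodaira--Spencer isomorphism $\omega_{X/\ZZ_p} = \omega^2(-D)[1]$ and a $[-1]$ shift to obtain
$$D(T_p^{naive}) : p_1^\star \omega^{2-k}(-D) \to p_2^! \omega^{2-k}(-D).$$
Pulling back along the involution $w : X_0(p) \to X_0(p)$ (sending an isogeny to its dual, hence swapping $p_1$ and $p_2$) rewrites this as a map $p_2^\star \omega^{2-k}(-D) \to p_1^! \omega^{2-k}(-D)$ of the same shape as $T_{p,2-k}^{naive}$.

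Next I would identify the two tensor factors. On the trace side, the Grothendieck dual of the natural transformation $p_1^\star \Rightarrow p_1^!$ given by multiplication by $\mathrm{tr}_{p_1}$ is itself, and pulling back by $w$ turns $\mathrm{tr}_{p_1}$ into $\mathrm{tr}_{p_2}$; Lemma \ref{lem-KS} then rewrites $\mathrm{tr}_{p_2}\circ p_2^\star KS$ as $\mathrm{tr}_{p_1}\circ p_1^\star KS \circ p^{-1}\pi_2$, recovering the $\mathrm{tr}_{p_1}$ in the definition of $T_{p,2-k}^{naive}$ at the cost of inserting a factor $p^{-1}\pi_2$ on the differential side. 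Dualizing $\pi_k$ and applying $w^\star$ produces a map involving the dual isogeny $\pi_k^\vee$; combining this with $\pi_k \circ \pi_k^\vee = p^k\mathrm{Id}$ and with the $p^{-1}\pi_2$ extracted above yields $\pi_{2-k}$ with overall coefficient $p^{k-1}$.

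The main obstacle is the bookkeeping of all these canonical identifications (duality on both sides, Kodaira--Spencer, the shift, $w^\star$, and the change of trace) without losing track of powers of $p$. A cleaner route, which I would take in practice, is to exploit that all sheaves in sight are invertible on the regular scheme $X_0(p)$, so both $w^\star D(T_p^{naive})$ and $p^{k-1} T_{p,2-k}^{naive}$ are $\mathcal{O}_{X_0(p)}$-linear maps between the same pair of invertible sheaves and agree globally as soon as they agree at a single generic point of the special fibre. Reducing to such a point, one is in the situation already analysed in the proof of Proposition \ref{prop-T_p-constructed}: at an \'etale generic point $\pi_k$ is an isomorphism and the trace contributes a factor $p$, while at a multiplicative generic point the trace is an isomorphism and $\pi_k$ contributes a factor $p^k$. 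In either case the duality pairing between these local isomorphisms yields the scalar $p^{k-1}$ by inspection, completing the proof.
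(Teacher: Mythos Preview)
Your main approach (the first two paragraphs) is essentially the paper's: factor $T_p^{naive}$ as $(\otimes\,\mathrm{tr}_{p_1})\circ\pi_k$, dualize via $D_{X_0(p)}p_2^\star \simeq p_2^! D_X$ and $D_{X_0(p)}p_1^! \simeq p_1^\star D_X$, apply Kodaira--Spencer, and use Lemma~\ref{lem-KS} to pass between the $p_1$- and $p_2$-trivializations of $\omega_{X_0(p)/\ZZ_p}$, collecting the resulting powers of $p$ via $\pi_j\pi_j^\vee=p^j$. The paper carries out exactly this computation, keeping $D(T_p^{naive})$ in the form $p_1^\star(\omega^{2-k}(-D))\to p_2^!(\omega^{2-k}(-D))$ and postponing the involution to Proposition~\ref{prop-T_p-selfdual}; but the substance is the same.

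Your alternative ``cleaner route'' via a generic point of the special fibre has a gap. It is true that two maps between invertible sheaves on an integral proper scheme agree once they agree at a single point (their ratio is a global constant). But the input you extract from Proposition~\ref{prop-T_p-constructed} is only the $p$-adic \emph{valuation} of $T_p^{naive}$ at each component, not the map itself; and since $w$ swaps the two components, the valuation comparison merely shows the constant ratio lies in $\ZZ_p^\times$. Pinning it down to $1$ still requires computing $D(T_p^{naive})$ exactly at such a point, and that computation --- identifying how local duality interacts with the two Kodaira--Spencer trivializations --- is precisely the content of Lemma~\ref{lem-KS}. So the ``by inspection'' step conceals the same work as the direct argument; the alternative route is not a genuine shortcut.
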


\begin{proof}The dual of the map $p_2^\star \omega^k \dashrightarrow p_1^\star \omega^k \rightarrow p_1^! \omega^k$ is 
\begin{equation*}
\begin{tikzcd}
p_1^\star ( \omega^{-k} \otimes \omega_{X/\ZZ_p}) \ar[r] & p_1^! (\omega^{-k} \otimes \omega_{X/\ZZ_p}) \ar[r,dashed] & p_2^! (\omega^{-k} \otimes \omega_{X/\ZZ_p}).
\end{tikzcd}
\end{equation*}
The first map $p_1^\star  \omega^{-k} \otimes p_1^\star \omega_{X/\ZZ_p} \rightarrow p_1^\star \omega^{-k} \otimes \omega_{X_0(p)/\ZZ_p}$ is just $1 \otimes \mathrm{tr}_{p_1}$. 

The second map $p_1^! (\omega^{-k} \otimes \omega_{X/\ZZ_p}) \dashrightarrow p_2^! (\omega^{-k} \otimes \omega_{X/\ZZ_p})$ is also 
\[
\begin{tikzcd}
\pi_{-k}^{-1} \otimes 1:p_1^\star \omega^{-k} \otimes \omega_{X_0(p)/\ZZ_p} \ar[r,dashed] & p_2^\star  \omega^{-k} \otimes \omega_{X_0(p)/\ZZ_p}.
\end{tikzcd}
\]

On the other hand, using the Kodaira--Spencer isomorphism we have
\[\omega_{X_0(p)/\ZZ_p} = p_1^! \oscr_{X} \otimes p_1^\star \omega_{X/\ZZ_p} = p_1^! \oscr_{X} \otimes p_1^\star  \omega^2(-D)\]
and
\[\omega_{X_0(p)/\ZZ_p} = p_2^! \oscr_{X} \otimes p_2^\star \omega_{X/\ZZ_p} = p_2^! \oscr_{X} \otimes p_2^\star  \omega^2(-D).\]
The identity  map: 
$ p_1^! \oscr_{X} \otimes p_1^\star  \omega^2(-D) \rightarrow  p_2^! \oscr_{X} \otimes p_2^\star  \omega^2(-D)$ decomposes into:
$$ \mathrm{tr}_{p_2} \mathrm{tr}_{p_1}^{-1} \otimes (\pi^\vee_1 \otimes (\pi_{1})^{-1})$$ according to Lemma~\ref{lem-KS}. 
We get that $\DD(T_p^{\naive}) :  p_1^\star (\omega^{-k+2}(-D) )  \dashrightarrow p_2^! (\omega^{-k+2}(-D))$ is
\[\mathrm{tr}_{p_2} \mathrm{tr}_{p_1}^{-1} \circ \mathrm{tr}_{p_1}\otimes (\pi^\vee_1 \otimes (\pi_{1})^{-1})  \otimes (\pi_{-k})^{-1}.\] 
It remains to observe that $\pi_{1-k}\pi^{\vee}_{1-k} = p^{1-k}$. 
\end{proof}

\begin{proof}[Proof of Proposition \ref{prop-T_p-selfdual}] This follows from the identity: $-\inf \{ 1,k\}  + k-1 = - \inf \{ 1, 2-k\}$.
\end{proof}

\section{Higher Hida theory}
\subsection{The mod $p$ theory}
We write $X_1\to\Spec\F_p$ and $X_0(p)_1\to\Spec\F_p$ for the special fibers of $X$ and $X_0(p)$.  We write $X_1^{\ord}\subseteq X_1$ for the ordinary locus.

We recall that $X_0(p)_1 = X_0(p)_1^{F} \cup X_0(p)_1^{V}$ is the union of the Frobenius and Verschiebung correspondences.  We let $p_i^{F}$ and $p_i^{V}$ be the restrictions of the projections $p_i$ to these components. The projection
\[p^V_2 : X_0(p)_1^{V} \longrightarrow X_1\]
is an isomorphism (and $X_0(p)_1^{V}$ parametrizes the Verschiebung isogeny $(p^V_1)^\star E \simeq (p^V_2)^\star E^{(p)}  \rightarrow (p^V_2)^\star E$). The projection
\[p^F_1 : X_0(p)_1^{F} \longrightarrow X_1\]  is an isomorphism (and $X_0(p)_1^{F}$ parametrizes the Frobenius isogeny $(p^F_1)^\star E \rightarrow (p^F_2)^\star E \simeq (p^F_1)^\star E^{(p)}$).  We denote by $i^F$ and $i^V $ the inclusions
\[X_0(p)_1^{F}\hookrightarrow X_0(p)_1\quad\text{and}\quad X_0(p)_1^{V} \hookrightarrow X_0(p)_1.\] 

\begin{lem}\label{lem-congruence}  If $k \geq 2$, we have a factorization on $X_1$:
\[
\xymatrix{ p_2^\star \omega^k \ar[r]^{T_p} \ar[d]& p_1^! \omega^k \\
i^V_\star  (p^V_2)^\star \omega^k \ar[r] & i^V_\star (p^V_1)^! \omega^k .\ar[u]}
\]

If $k \leq 0$, we have a factorization on $X_1$: 
\[
\xymatrix{ p_2^\star \omega^k \ar[r]^{T_p}\ar[d] & p_1^! \omega^k \\
i^F_\star  (p^F_2)^\star \omega^k \ar[r] & i^F_\star (p^F_1)^! \omega^k .\ar[u]}
\]
\end{lem}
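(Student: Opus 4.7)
The plan is to deduce both factorizations from the generic-point valuation estimates already carried out in the proof of Proposition~\ref{prop-T_p-constructed}: after the normalization $p^{-\inf\{1,k\}}$, the map $T_p$ will be forced to vanish identically on the ``wrong'' component for $k$ in the appropriate range. I will concentrate on $k \geq 2$; the case $k \leq 0$ will be exactly symmetric with the roles of $X_0(p)_1^F$ and $X_0(p)_1^V$ exchanged.

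The first step is to identify the target subsheaf $i^V_\star (p_1^V)^!\omega^k$ intrinsically inside $p_1^!\omega^k$. On the nodal curve $X_0(p)_1$, a direct local computation at a supersingular point shows the identity $\mathscr{I}_{X_0(p)_1^F} = \underline{\Gamma}_{X_0(p)_1^V}(\oscr_{X_0(p)_1})$, and tensoring with $p_1^!\omega^k$ yields $\underline{\Gamma}_{X_0(p)_1^V}(p_1^!\omega^k) = \mathscr{I}_{X_0(p)_1^F}\cdot p_1^!\omega^k$. Combined with Proposition~\ref{prop-restriction-simple}, this identifies $i^V_\star (p_1^V)^!\omega^k$ with the kernel of the natural restriction map $p_1^!\omega^k \to i^F_\star i^{F,\star}p_1^!\omega^k$.

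Granting this identification, the adjunction $i^{V,\star} \dashv i^V_\star$ reduces the existence of the claimed factorization to a single vanishing statement: any morphism $p_2^\star\omega^k \to i^V_\star\mathscr{G}$ factors canonically through the unit $p_2^\star\omega^k \to i^V_\star i^{V,\star}p_2^\star\omega^k = i^V_\star (p_2^V)^\star\omega^k$, so it is enough to prove $\mathrm{im}(T_p) \subseteq i^V_\star (p_1^V)^!\omega^k$, equivalently that $i^{F,\star}T_p = 0$ on $X_0(p)_1^F$. Since $i^{F,\star}p_2^\star\omega^k$ and $i^{F,\star}p_1^!\omega^k$ are line bundles on the smooth irreducible curve $X_0(p)_1^F \simeq X_1$, this vanishing is detected at the generic point. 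That generic point is precisely the multiplicative generic point treated in Proposition~\ref{prop-T_p-constructed}, where $T^{naive}_{p,k}$ was shown to take values in $p^k \cdot p_1^!\omega^k$; hence $T_{p,k} = p^{-1}T^{naive}_{p,k}$ takes values in $p^{k-1}\cdot p_1^!\omega^k$, which vanishes modulo $p$ as soon as $k \geq 2$. The symmetric argument for $k \leq 0$ uses the \'etale generic point, where $T^{naive}_{p,k}$ takes values in $p\cdot p_1^!\omega^k$, so that $T_{p,k} = p^{-k}T^{naive}_{p,k}$ lands in $p^{1-k}\cdot p_1^!\omega^k$, which is divisible by $p$ when $k \leq 0$.

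The only real subtlety I anticipate is the bookkeeping in the first step: matching the three formally different avatars of ``sections of $p_1^!\omega^k$ supported on $X_0(p)_1^V$''---the pushforward $i^V_\star (p_1^V)^!\omega^k$ coming from Proposition~\ref{prop-restriction-simple}, the local-cohomology sheaf $\underline{\Gamma}_{X_0(p)_1^V}(p_1^!\omega^k)$, and the ideal-sheaf product $\mathscr{I}_{X_0(p)_1^F}\cdot p_1^!\omega^k$. Once these identifications are in place, the statement is essentially a reformulation of the $p$-adic valuation computation already performed in Proposition~\ref{prop-T_p-constructed}, and no further difficulty is expected.
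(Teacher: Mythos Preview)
Your proposal is correct and follows essentially the same approach as the paper's proof: both invoke Proposition~\ref{prop-restriction-simple} to reduce the factorization to a vanishing of $T_p$ at the generic point of the ``wrong'' component, and both deduce that vanishing from the valuation computation in the proof of Proposition~\ref{prop-T_p-constructed}. You spell out more of the bookkeeping (the identification $\underline{\Gamma}_{X_0(p)_1^V}(p_1^!\omega^k) = \mathscr{I}_{X_0(p)_1^F}\cdot p_1^!\omega^k$ and the adjunction argument on the source side), but the substance is the same.
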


\begin{proof} By Proposition \ref{prop-restriction-simple}, this amounts to checking that the cohomological correspondence $T_p : p_2^\star \omega^k \rightarrow p_1^! \omega^k$ vanishes at any generic point of multiplicative type in $X_0(p)_1$ if $k \geq 2$, and at any generic point of \'etale type in $X_0(p)_1$ if $k \leq 0$. This follows from the normalization of the correspondence as explained in the proof of Proposition \ref{prop-T_p-constructed}. 
\end{proof}

\begin{rem}We can informally rephrase this lemma by saying that we have congruences: $T_p = U_p\mod p$ if $k \geq 2$ and $T_p = \mathrm{Frob}\mod p$ if $k \leq 0$, see Remark \ref{rem-$q$-expansion}. 
\end{rem}

\begin{prop}\label{prop-Tp-local} For all $k \geq 2$ and $n\in\ZZ$, the cohomological correspondence $T_p$ induces a map:
$$ p_2^\star (\omega^{k} ( (np + k-2) SS ) ) \longrightarrow p_1^! (\omega^{k}( n SS)).$$
For all $k \leq 0$ and $n\in\ZZ$, the cohomological correspondence $T_p$ induces a map:
$$ p_2^\star (\omega^{k} ( -n SS ) ) \longrightarrow p_1^! (\omega^{k}( (-np +k) SS)).$$
\end{prop}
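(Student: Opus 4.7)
The plan is to prove the proposition by an explicit local analysis at each supersingular point $s$ of $X_0(p)$.  The completed local ring is $R = W(k_s)[[x,y]]/(xy - p)$, with $\{x=0\}$ and $\{y=0\}$ corresponding to the Frobenius component $X_0(p)_1^F$ and the Verschiebung component $X_0(p)_1^V$ respectively; local parameters $t$ of $X$ at the corresponding supersingular points may be chosen so that $p_1^\star t = y + x^p$ and $p_2^\star t = x + y^p$ modulo units, giving the explicit presentation $R = W[[t]][x]/(x^{p+1} - tx + p)$.

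For $k \geq 2$, I would compute $T_p$ in these local coordinates.  The differential pullback $\pi^\star : p_2^\star \omega \to p_1^\star \omega$ vanishes to order exactly $1$ along the Frobenius component---its reduction modulo $p$ vanishes on the Frobenius side because Frobenius kills differentials, and the multiplicity is $1$ because the restriction $\pi^\star|_{X_0(p)_1^V}$ is (up to a unit) the Hasse invariant, which has simple zeros at supersingular points.  Hence locally $\pi^\star = x \cdot u_1$ for a unit $u_1 \in R$.  The trace $\mathrm{tr}_{p_1}$, computed in the above presentation via the dual basis together with Newton's identities for the power sums of the roots of $x^{p+1} - tx + p$, takes the form $\mathrm{tr}_{p_1} = y \cdot u_2 \cdot \sigma$, where $u_2 \in R^\times$ and $\sigma$ is a local generator of $p_1^! \mathcal{O}_X$.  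Combining and using $p = xy$ yields
\[
T_p \;=\; p^{-1}\, \pi_k \, \mathrm{tr}_{p_1} \;=\; x^{k-1} \cdot (u_1^k u_2) \cdot \sigma,
\]
so $T_p$ vanishes to order exactly $k-1$ along the Frobenius component near $s$, with no other vanishing.

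I would then deduce the claimed extension from this divisor formula.  The local structure of the sheaves $p_2^\star(\omega^k((np+k-2)SS))$ and $p_1^!(\omega^k(n SS))$ near $s$ is governed by the pullbacks of a local parameter at the corresponding supersingular point of $X$: since $p_1$ (resp.~$p_2$) is étale on one component of $X_0(p)_1$ and purely inseparable on the other, the pullbacks $p_1^\star t$ and $p_2^\star t$ have asymmetric intersection multiplicities with the two components ($1$ on the étale side, $p$ on the Frobenius side).  The exponents $np+k-2$ on the source and $n$ on the target are exactly those needed so that the combined divisor, together with the $x^{k-1}$ vanishing of $T_p$, becomes effective on $X_0(p)$ near $s$.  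For $k \leq 0$, the normalization $T_p = p^{-k} T_p^{\mathrm{naive}}$ yields the analogous computation $T_p = y^{1-k} \cdot (\mathrm{unit}) \cdot \sigma'$ (now vanishing on the Verschiebung component), and the symmetric bookkeeping gives the second assertion.

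The main obstacle will be precisely this bookkeeping: matching the vanishing order of $T_p$ along the appropriate component of $X_0(p)_1$ against the asymmetric pullbacks of the supersingular divisor on $X$ under $p_1$ versus $p_2$, so that the resulting divisor on $X_0(p)$ is effective and has exactly the shape prescribed by the exponents $(np+k-2, n)$ in the statement.
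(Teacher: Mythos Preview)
Your approach is correct and genuinely different from the paper's. The paper never writes down the completed local ring at a supersingular point; instead it uses the factorization lemma (that $T_p$ mod $p$ is supported on $X_0(p)_1^V$ for $k\geq 2$, resp.\ on $X_0(p)_1^F$ for $k\leq 0$) and then works entirely on that single smooth component. There the argument is structural: the pullback formula $(p_1^V)^\star SS = p\,(p_2^V)^\star SS$ accounts for the ``$np$'' in the exponent, and the fact that the differential map $(p_2^V)^\star\omega\to(p_1^V)^\star\omega$ is the Hasse invariant (hence an isomorphism after twisting the source by $SS$) accounts for the ``$k-2$''. Your route recovers the same two ingredients implicitly through the formula $T_p=x^{k-1}u\sigma$, together with the asymmetric orders of $p_1^\star t$ and $p_2^\star t$ on the two branches. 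Your approach has the advantage of giving the exact order of vanishing of $T_p$ at once; the paper's is more portable to situations where one does not have such a clean local model.

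Two small points. First, your justification that the multiplicity of $\pi^\star$ along $\{x=0\}$ is exactly $1$ mixes two different things: the order along a component is read off at the \emph{generic} point of that component, not at the supersingular point of the other component. A clean way to argue is to note that $\pi^\star$ is an isomorphism on the generic fibre, so its divisor has the form $a[\{x=0\}]+b[\{y=0\}]$; then $b=0$ because $\pi^\star$ is generically nonzero on $X_0(p)_1^V$, and $a=1$ because its restriction there is the Hasse invariant with a simple zero. (Alternatively, quote the generic-point computation in the construction of $T_p$.) The Euler-formula identity $\mathrm{tr}_{p_1}=f'(x)\sigma=((p+1)x^p-t)\sigma=-y(1-x^{p+1})\sigma$ makes your trace claim immediate.

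Second, the ``bookkeeping'' you flag as the main obstacle is not an obstacle at all once you remember that the statement lives on the \emph{special fibre}, where $X_0(p)_1$ is a nodal curve. Divisors do not behave well for a section vanishing identically on one branch, so rather than checking ``effectiveness on $X_0(p)$'' you should simply check regularity branch by branch. On $\{y=0\}$ one has $p_2^\star t\sim x$, $p_1^\star t\sim x^p$, so the twisted section is $x^{k-1}\cdot x^{-(np+k-2)}\cdot x^{pn}=x$; on $\{x=0\}$ the map is identically zero. Both vanish at the node, so the section glues. The $k\leq 0$ case is symmetric with $y^{1-k}$ in place of $x^{k-1}$.
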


\begin{proof} We first prove the claim when $k\geq 2$. The cohomological correspondence is supported on $X_0(p)_1^V$ by Lemma~\ref{lem-congruence}. The map $p^V_1$ is totally ramified of degree $p$ and the map $p_2^V$ is an isomorphism. It follows that  we have an equality of divisors $(p^V_1)^\star(SS) = p (p^V_2)^\star(SS)$.  We deduce that the map $(p^V_2)^\star \omega^2 \rightarrow (p^V_1)^! \omega^2$ induces a morphism $(p^V_2)^\star (\omega^2(npSS)) \rightarrow (p^V_1)^!(\omega^2(nSS))$. 

This proves the claim for $k=2$. For $k \geq 3$, we remark that the cohomological correspondence 
$(p^V_2)^\star \omega^k \rightarrow (p^V_1)^! \omega^k$ is the tensor product of the map 
$(p^V_2)^\star \omega^2 \rightarrow (p^V_1)^! \omega^2$ and the map $(p^V_{2})^\star \omega^{k-2} \rightarrow (p^V_{1})^\star \omega^{k-2}$.   But $(p^V_{1})^\star \omega_E \simeq ((p^V_{2})^\star \omega_E)^{p}$ and the differential of the isogeny $(p^V_{2})^\star E \rightarrow (p^V_{1})^\star E$ identifies with the  Hasse invariant and induces an isomorphism: $(p^V_{2})^\star (\omega_E(SS)) \rightarrow (p^V_{1})^\star \omega_E$. We deduce that there is a map
$p_2^\star (\omega^{k} ( (np + k-2) SS ) ) \rightarrow p_1^! (\omega^{k}( n SS))$. 

We now prove the claim when $k\leq 0$. The cohomological correspondence is supported on $X_0(p)_1^F$ by Lemma~\ref{lem-congruence}. The map $p^F_2$ is totally ramified of degree $p$ and the map $p_1^F$ is an isomorphism. It follows that  we have an equality of divisors $(p^F_2)^\star(SS) = p (p^F_1)^\star(SS)$.  We deduce that the map $(p^F_2)^\star \oscr_X \rightarrow (p^F_1)^! \oscr_X$ induces a morphism $(p^F_2)^\star (\oscr_X(-nSS)) \rightarrow (p^F_1)^! (\oscr_X(-npSS))$. 

This proves the claim for $k=0$. For $k \leq -1$, we remark that the cohomological correspondence 
$(p^F_2)^\star \omega^k \rightarrow (p^F_1)^! \omega^k$ is the tensor product of the map 
$(p^F_2)^\star \oscr_X \rightarrow (p^F_1)^! \oscr_X$ (the cohomological correspondence for $k=0$) and a map $(p^F_{2})^\star \omega^{k} \rightarrow (p^F_{1})^\star \omega^{k}$ that we now describe.   Informally, this map is deduced from the differential of the isogeny $(p_1^F)^\star E \rightarrow (p_2^F)^\star E$, after normalizing by a factor $p^{-1}$ (one can make sense of this over the formal scheme ordinary locus). Equivalently, it is deduced from the differential of the isogeny of the dual map $(p_2^F)^\star E \rightarrow  (p_1^F)^\star E$ (the Verschiebung map). 

We observe that  $(p^F_{2})^\star \omega_E \simeq ((p^F_{1})^\star \omega_E)^{p}$ and there is a natural isomorphism:
\[(p^F_{2})^\star \omega_E \xrightarrow{(p^F_{1})^\star Ha^{-1}} (p^F_{1})^\star (\omega_{E}(SS)),\]
and therefore, for all $k \leq 0$, an isomorphism:
\[(p^F_{2})^\star \omega^k \xrightarrow{(p^F_{1})^\star Ha^{k}} (p^F_{1})^\star (\omega^{k}(-kSS))\]
which factors the  map
\[(p^F_{2})^\star \omega^k \xrightarrow{(p^F_{1})^\star Ha^{k}} (p^F_{1})^\star \omega^{k}.\]
 We deduce that there is a map: 
$p_2^\star (\omega^{k} ( -nSS )) \longrightarrow p_1^! (\omega^{k}( (-np + k) SS))$.
\end{proof}

\begin{coro}
\leavevmode
\begin{enumerate}
\item The $T_p$ operator acts on $\mathrm{R}\Gamma(X_1, \omega^k(nSS))$ for all $n \geq 0$ and $k\geq 2$, and the maps $\mathrm{R}\Gamma(X_1, \omega^k(nSS)) \rightarrow \mathrm{R}\Gamma(X_1, \omega^k(n'SS))$ are equivariant for $0 \leq n \leq n'$.

 \item We have commutative diagrams for all $n \geq 0$  and $k \geq 2$
\[
\xymatrix{ \mathrm{R}\Gamma(X_1, \omega^k((np + k-2)SS)) \ar[r]^{T_p}  \ar[dr] & \mathrm{R}\Gamma(X_1, \omega^k((np + k-2)SS)) \\
\mathrm{R}\Gamma(X_1, \omega^k(nSS)) \ar[r]^{T_p} \ar[u] & \mathrm{R}\Gamma(X_1, \omega^k(nSS))\ar[u]}
\]
where the diagonal arrow is the map of Proposition~\ref{prop-Tp-local}.
\item The $T_p$ operator acts on $\mathrm{R}\Gamma(X_1, \omega^k(-nSS))$ for all $n \geq 0$ and $k\leq 0$, and the maps $\mathrm{R}\Gamma(X_1, \omega^k(-n'SS)) \rightarrow \mathrm{R}\Gamma(X_1, \omega^k(-nSS))$ are equivariant for $0\leq n\leq n'$.

\item  We have commutative diagrams for all $n \geq 0$  and $k \leq 0$
\[
\xymatrix{ \mathrm{R}\Gamma(X_1, \omega^k(-nSS)) \ar[r]^{T_p}  \ar[dr] & \mathrm{R}\Gamma(X_1, \omega^k(-nSS)) \\
\mathrm{R}\Gamma(X_1, \omega^k((-np+k)SS)) \ar[r]^{T_p} \ar[u]  & \mathrm{R}\Gamma(X_1, \omega^k((-np+k)SS)) \ar[u] }
\]
where the diagonal arrow is the map of Proposition~\ref{prop-Tp-local}.
\end{enumerate}
\end{coro}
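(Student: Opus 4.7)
The plan is to deduce all four parts of the corollary formally from the preceding Proposition, using the standard construction of a cohomology endomorphism from a cohomological correspondence as recalled in Section~\ref{section-T_p}. For each $m \geq 0$, the Proposition provides a correspondence $T_p^{(m)} : p_2^\star \omega^k((mp+k-2)SS) \to p_1^! \omega^k(mSS)$ when $k \geq 2$, and $T_p^{(m)} : p_2^\star \omega^k(-mSS) \to p_1^! \omega^k((-mp+k)SS)$ when $k \leq 0$. Each of these induces a map between the corresponding cohomology groups $\mathrm{R}\Gamma(X_1, -)$ on $X_1$, via the usual composition with $p_2^\star$ and $\mathrm{tr}_{p_1}$.

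To define the endomorphism asserted in (1), I would precompose $T_p^{(n)}$ with the canonical inclusion $\mathrm{R}\Gamma(X_1, \omega^k(nSS)) \hookrightarrow \mathrm{R}\Gamma(X_1, \omega^k((np+k-2)SS))$, which is well defined because $k \geq 2$ forces $n \leq np+k-2$. In the case $n \leq 0$ of (3), writing $m = -n \geq 0$, the direction of the inclusion is reversed: I would postcompose $T_p^{(m)}$ with $\mathrm{R}\Gamma(X_1, \omega^k((-mp+k)SS)) \hookrightarrow \mathrm{R}\Gamma(X_1, \omega^k(-mSS))$, using that $-mp+k \leq -m$ when $m \geq 0$ and $k \leq 0$.

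The content of the equivariance assertion in (1) and of the commutative squares in (2) (and the symmetric claims in (3), (4)) reduces to the fact that the $T_p^{(m)}$ for varying $m$ are mutually compatible with the natural inclusions. Concretely, for $m_1 \leq m_2$ and $k \geq 2$, the sheaf-level square
$$ \xymatrix{ p_2^\star \omega^k((m_1 p+k-2)SS) \ar[r]^-{T_p^{(m_1)}} \ar[d] & p_1^!\omega^k(m_1 SS) \ar[d] \\ p_2^\star \omega^k((m_2 p+k-2)SS) \ar[r]^-{T_p^{(m_2)}} & p_1^!\omega^k(m_2 SS) } $$
commutes, with vertical arrows the canonical inclusions. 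This is immediate from the construction of the $T_p^{(m)}$ in the Proposition, since all four arrows are restrictions of the same ambient rational correspondence $T_p : p_2^\star \omega^k \dashrightarrow p_1^! \omega^k$ of Section~\ref{section-T_p}. The diagonal arrow in the diagram of (2) is then precisely the raw $T_p^{(n)}$, and both paths around the square are identified with it by inserting this compatibility square into the definitions of the top and bottom $T_p$, yielding commutativity. Parts (3) and (4) are handled by the same argument with inclusions reversed. I do not expect any genuine obstacle: the mathematical content is concentrated in the Proposition, and the corollary is pure bookkeeping with natural inclusions of sheaves with controlled poles/vanishing along $SS$.
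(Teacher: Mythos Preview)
Your proposal is correct and matches the paper's intended argument: the paper states this corollary without proof, treating it as an immediate formal consequence of the preceding Proposition, and your write-up supplies exactly the bookkeeping with natural inclusions that makes this precise. In particular, your observation that all the maps $T_p^{(m)}$ are restrictions of the same ambient rational correspondence is the key point ensuring compatibility, and nothing further is needed.
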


For any $k$, we define as usual   $\HH^i_c(X^{\ord}_1, \omega^k) = \lim_n \HH^i(X_1, \omega^{k}(-nSS))$ following \cite{MR0297775}. This is a profinite $\F_p$-vector space.   We also recall that $\HH^i(X^{\ord}_1, \omega^k) = \colim_n \HH^i(X_1, \omega^{k}(nSS))$.

\begin{coro}\label{coro-mod-p-control}
\leavevmode
\begin{enumerate}
\item If $k \geq 2$, $T_p$ is locally finite on $\HH^i(X^{\ord}_1, \omega^k)$.
\item If $k \leq 0$, $T_p$ is locally finite on $\HH^i_c(X^{\ord}_1, \omega^k)$.
\item If $k \geq 3$, we have $e(T_p) \HH^i(X^{\ord}_1, \omega^k) = e(T_p) \HH^i(X_1, \omega^k)$.
\item If $k = 2$, we have $e(T_p) \HH^i(X^{\ord}_1, \omega^2) = e(T_p) \HH^i(X_1, \omega^2(SS))$.
\item If $k \leq -1$, we have $e(T_p) \HH^i_c(X^{\ord}_1, \omega^k) = e(T_p) \HH^i(X_1, \omega^k)$.
\item If $k  =  0$, we have $e(T_p) \HH^i_c(X^{\ord}_1, \oscr_X) = e(T_p) \HH^i(X_1, \oscr_X(-SS))$.
\end{enumerate}
\end{coro}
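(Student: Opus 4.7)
The plan is to deduce all six claims from the following elementary observation: if $T$ is locally finite on modules $M$ and $N$ (discrete or profinite $R$-modules) and there exist maps $f : M \to N$, $g : N \to M$ satisfying $gf = T|_M$ and $fg = T|_N$, then $f$ and $g$ are automatically $T$-equivariant (since $fT_M = fgf = T_Nf$ and similarly for $g$) and restrict to mutually inverse isomorphisms $e(T)M \xrightarrow{\sim} e(T)N$ (with inverses $T^{-1}g$ and $T^{-1}f$, which make sense because $T$ is invertible on the ordinary parts).

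For (1) and (2), local finiteness is immediate from the preceding corollary: items (1)--(2) exhibit $\HH^i(X^{ord}_1, \omega^k) = \mathrm{colim}_n \HH^i(X_1, \omega^k(nSS))$ as a directed union of finite, $T_p$-stable $\F_p$-subspaces (for $k \geq 2$), while items (3)--(4) show that the kernels of the natural projections from $\HH^i_c(X^{ord}_1, \omega^k) = \lim_n \HH^i(X_1, \omega^k(-nSS))$ to its finite quotients form a neighborhood basis at $0$ of $T_p$-stable open submodules (for $k \leq 0$).

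For (3)--(4), set $V_n = \HH^i(X_1, \omega^k(nSS))$ with $k \geq 2$. The preceding proposition produces on cohomology a map $g : V_{np+k-2} \to V_n$, and the sheaf inclusion $\omega^k(nSS) \hookrightarrow \omega^k((np+k-2)SS)$ yields $f : V_n \to V_{np+k-2}$. The two commutative triangles of item~(2) in the preceding corollary read exactly $fg = T_p|_{V_{np+k-2}}$ and $gf = T_p|_{V_n}$, so the abstract observation gives an isomorphism $f : e(T_p)V_n \xrightarrow{\sim} e(T_p)V_{np+k-2}$. Since $e(T_p)$ commutes with filtered colimits, it suffices to exhibit a cofinal subsystem of indices along which to iterate: for $k \geq 3$ one starts at $n = 0$ and follows $0 \mapsto k-2 \mapsto (k-2)(p+1) \mapsto \cdots$, which diverges because $k - 2 \geq 1$, giving (3); for $k = 2$ the map at $n = 0$ is trivial, so one starts at $n = 1$ with the cofinal chain $1, p, p^2, \ldots$, giving (4).

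Parts (5) and (6) are proved identically using item~(4) of the preceding corollary, with $W_n = \HH^i(X_1, \omega^k(-nSS))$, the map $g : W_n \to W_{np-k}$ coming from the cohomological correspondence, and $f : W_{np-k} \to W_n$ induced by the sheaf inclusion $\omega^k(-(np-k)SS) \hookrightarrow \omega^k(-nSS)$. Since $e(T_p)$ also commutes with the inverse limit defining $\HH^i_c$, the cofinal chain $0, -k, -k(p+1), \ldots$ (which diverges because $-k \geq 1$) handles $k \leq -1$, and the chain $1, p, p^2, \ldots$ handles $k = 0$. The only real obstacle is organizing the bookkeeping in the four regimes $k \geq 3$, $k = 2$, $k \leq -1$, $k = 0$ and checking, in each, that the iteration starting from the appropriate base case produces a cofinal subsystem of the inductive (resp.\ projective) system computing $\HH^i(X^{ord}_1, \omega^k)$ (resp.\ $\HH^i_c(X^{ord}_1, \omega^k)$).
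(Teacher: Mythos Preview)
Your argument is correct and is precisely the intended one: the paper states this corollary without proof, as an immediate consequence of the preceding corollary, and your write-up supplies the natural details. The abstract observation (that maps $f,g$ with $gf=T$ and $fg=T$ induce mutually inverse isomorphisms on ordinary parts) together with the cofinal iteration $n\mapsto np+k-2$ (resp.\ $n\mapsto np-k$) is exactly what the commutative triangles in the preceding corollary are designed to give.

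One small terminological point: for $i=1$ the transition maps $\HH^1(X_1,\omega^k(nSS))\to\HH^1(X_1,\omega^k(n'SS))$ need not be injective, so ``directed union'' is slightly imprecise; what you actually use (and what suffices) is that the images in the colimit are finite $T_p$-stable subspaces exhausting it. This does not affect the argument.
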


\begin{coro} 
\leavevmode 
\begin{enumerate}
\item  If $k \leq -1$, $e(T_p) \mathrm{R}\Gamma (X_1, \omega^k)$ is concentrated in degree $1$,
\item If $k \geq 3$, $e(T_p) \mathrm{R}\Gamma (X_1, \omega^k)$ is concentrated in degree $0$.
\end{enumerate}
\end{coro}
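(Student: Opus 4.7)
The plan is to deduce both statements directly from the control theorem of Corollary \ref{coro-mod-p-control} together with Serre's affine vanishing. The crucial geometric input is that the ordinary locus $X_1^{ord}$ is affine: it is the nonvanishing locus of the Hasse invariant $Ha \in \HH^0(X_1, \omega^{p-1})$, so $X_1^{ord} = X_1 \setminus SS$ is the complement of a nonempty finite set in a smooth proper curve, and the complement of any nonempty finite subset of a smooth proper curve is affine.

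For $k \geq 3$, I would argue as follows. Since $X_1^{ord}$ is affine and $\omega^k$ restricts to a coherent sheaf on it, Serre's vanishing gives $\HH^i(X_1^{ord}, \omega^k) = 0$ for all $i \geq 1$. Applying Corollary \ref{coro-mod-p-control}(3), we conclude $e(T_p)\HH^i(X_1,\omega^k) = e(T_p)\HH^i(X_1^{ord},\omega^k) = 0$ for $i \geq 1$. Combined with the trivial vanishing $\HH^i(X_1,\omega^k)=0$ for $i \geq 2$ (dimension reasons), this shows that $e(T_p)\mathrm{R}\Gamma(X_1,\omega^k)$ is concentrated in degree $0$.

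For $k \leq -1$, Corollary \ref{coro-mod-p-control}(5) reduces the statement to the vanishing $e(T_p)\HH^0_c(X_1^{ord},\omega^k) = 0$, together with the automatic vanishing of $\HH^i(X_1,\omega^k)$ for $i \geq 2$. By definition,
\[
\HH^0_c(X_1^{ord}, \omega^k) \;=\; \lim_n \HH^0(X_1, \omega^k(-nSS)),
\]
which sits inside $\HH^0(X_1, \omega^k)$ as the intersection of the decreasing chain of subspaces of sections vanishing to order $\geq n$ along each supersingular point. Since $\omega^k$ is a line bundle on the smooth curve $X_1$, it is torsion-free, so any section vanishing to infinite order at any point is zero. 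Hence this inverse limit is $0$, and $e(T_p)\mathrm{R}\Gamma(X_1,\omega^k)$ is concentrated in degree $1$.

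In both cases the proof is essentially a formal consequence of the control theorem: the main substantive work has already been done in constructing the normalized cohomological correspondence $T_p$ and proving the comparison isomorphisms of Corollary \ref{coro-mod-p-control}. The only slightly delicate point is the vanishing of $\HH^0_c$ in the negative weight case, but this is handled by the torsion-free argument above rather than any further Hecke-theoretic input.
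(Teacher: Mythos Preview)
Your proof is correct and follows the same approach as the paper: both vanishings come from Corollary~\ref{coro-mod-p-control} together with the affineness of $X_1^{ord}$. The paper's own proof is a single sentence attributing both $\HH^1(X_1^{ord},\omega^k)=0$ and $\HH^0_c(X_1^{ord},\omega^k)=0$ to the affineness of $X_1^{ord}$; you have simply unpacked the second of these via the concrete description $\HH^0_c=\lim_n \HH^0(X_1,\omega^k(-nSS))$ and Krull's intersection theorem, which is a perfectly good way to see it (and implicitly uses that $SS$ meets the curve, i.e.\ that $X_1^{ord}$ is a strict open).
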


\begin{proof} This follows from  $\HH^1(X^{\ord}_1, \omega^k) =0$  and $ \HH^0_c(X^{\ord}_1, \omega^k) = 0$ because $X^{\ord}_1$ is affine.
\end{proof} 

\begin{rem} Of course, we even have $\HH^1(X_1,\omega^k)=0$ for $k \geq 2$ and $\HH^0(X_1,\omega^k)=0$ for $k<0$ by the Riemann--Roch theorem and the Kodaira--Spencer isomorphism, but the given proof is independent of this, and more suitable for generalizations in higher dimension.
\end{rem}

\subsection{The $p$-adic theory}
Let $\mathfrak{X}$ be the $p$-adic completion of $X$ and let $X_n \rightarrow \Spec\ZZ/p^n\ZZ$ be the scheme obtained by reduction modulo $p^n$.  Let $\mathfrak{X}^{\ord}$ and $X_n^{\ord}$ denote the ordinary loci.

\subsubsection{The Igusa tower} For all $n\geq 1$, we have over $\mathfrak{X}^{\ord}$ a canonical multiplicative subgroup $H_n^{\can}\subset E[p^n]$, which is \'etale locally isomorphic to $\mu_{p^n}$.  We can consider the \'etale Cartier dual $(H_n^{\can})^D=\mathrm{Hom}(H_n^{\can},\mu_{p^n})$ (away from the boundary, the canonical principal polarization on $E$ defines an isomorphism between $(H_n^{\can})^D$ and the \'etale quotient $E[p^n]^{\et}=E[p^n]/H_n^{\can}$).  We can consider the Hodge-Tate map:
$$\mathrm{HT}:(H_n^{\can})^D\longrightarrow\omega_E/p^n$$
which sends a local section $s:H_n^{\can}\to\mu_{p^n}$ of $(H_n^{\can})^D$ to $s^\star\frac{dx}{x}\in\omega_{H_n^{\can}}=\omega_{E}/p^n$, where $\frac{dx}{x}$ is the canonical generator of the co-lie algebra of $\mu_{p^n}=\Spec\ZZ[x]/(x^{p^n}-1)$.  If $s$ is a local generator of $(H_n^{\can})^D$, then $\mathrm{HT}(s)$ is a local generator of $\omega_E/p^n$, and hence the linearized Hodge-Tate map
$$\mathrm{HT}\otimes 1:(H_n^{\can})^D\otimes_{\ZZ/p^n\ZZ}\oscr_{X_n^{\ord}}\longrightarrow \omega_E/p^n$$
is an isomorphism.

Passing to the limit over $n$ we obtain the Hodge-Tate map
$$\mathrm{HT}:T_p\left((H^{\can})^D\right)\longrightarrow \omega_E$$
where $T_p\left((H^{\can})^D\right)$ is the pro-\'etale sheaf $\lim_n (H_n^{\can})^D$ (pro-\'etale locally $\ZZ_p$), which induces an isomorphism
$$\mathrm{HT}\otimes 1:T_p\left((H^{\can})^D\right)\otimes_{\ZZ_p}\oscr_{\mathfrak{X}^{\ord}}\longrightarrow\omega_E.$$
This defines a $\ZZ_p^\times$ reduction of the principal $\mathbb{G}_m$-torsor $\omega_E$ over $\mathfrak{X}^{\ord}$ (in the pro-\'etale topology). 

We can form $\pi : \mathfrak{IG} = \mathrm{Isom} ( \ZZ_p, T_p\left((H^{\can})^D)\right) \rightarrow \mathfrak{X}^{\ord}$, the Igusa tower.  This is a $p$-adic formal scheme, and a profinite \'etale cover of $\mathfrak{X}^{\ord}$. 

 Let $\Lambda = \ZZ_p[[\ZZ_p^\times]]$ be the Iwasawa algebra, and let  $\kappa^{\un} : \ZZ_p^\times \rightarrow \Lambda^\times$ be the universal character. For any $k \in \ZZ$, we have an algebraic character $\ZZ_p^\times \rightarrow \ZZ_p^\times$, given by $x \mapsto x^k$ and we denote by $k : \Lambda \rightarrow \ZZ_p$ the corresponding algebra morphism. 
We let $\omega^{\kappa^{\un}} = \left(\oscr_{\mathfrak{IG}} \hat{\otimes} \Lambda\right)^{\ZZ_p^\times}$ where the invariants are taken for the diagonal action. 

\begin{lem} The sheaf $\omega^{\kappa^{\un}}$  is an invertible  sheaf of $\oscr_{\mathfrak{X}^{\ord}}\hat{\otimes}\Lambda$-modules  and for any $k \in \ZZ$, there is a canonical isomorphism of invertible sheaves over $\mathfrak{X}^{\ord}$: 
$$ \omega^k \rightarrow  \omega^{\kappa^{\un}} \otimes_{\Lambda, k} \ZZ_p. $$
 \end{lem}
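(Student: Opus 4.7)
The sheaf $\omega^{\kappa^{un}}$ is defined as the $\ZZ_p^\times$-invariants of $\pi_\star\ocal_{\mathfrak{IG}} \hat\otimes \Lambda$ under the diagonal action, where $\ZZ_p^\times$ acts on $\pi_\star\ocal_{\mathfrak{IG}}$ via the torsor structure on $\mathfrak{IG}$ and on $\Lambda$ via multiplication by $\kappa^{un}(g)=[g]$. My plan is to verify both assertions by descending from a local trivialization of this torsor. First, I would choose an affine open cover $\{U\}$ of $\mathfrak{X}^{ord}$ small enough that each finite level Igusa scheme $\mathfrak{IG}_n \to \mathfrak{X}^{ord}$ (a $(\ZZ/p^n\ZZ)^\times$-torsor) trivializes over $U$ compatibly in $n$, yielding a $\ZZ_p^\times$-equivariant identification $\pi_\star\ocal_{\mathfrak{IG}}|_U \simeq \mathrm{Map}_{cts}(\ZZ_p^\times, \ocal_U)$ under which the torsor action becomes translation $(g\cdot f)(h)=f(g^{-1}h)$.

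For the invertibility, the diagonal invariants in $\mathrm{Map}_{cts}(\ZZ_p^\times,\ocal_U) \hat\otimes \Lambda = \mathrm{Map}_{cts}(\ZZ_p^\times,\ocal_U \hat\otimes \Lambda)$ consist of continuous functions $f$ satisfying $f(gh) = [g] f(h)$ for all $g,h \in \ZZ_p^\times$. Such an $f$ is determined by $f(1) \in \ocal_U \hat\otimes \Lambda$, and every value occurs. Hence evaluation at $1$ gives a natural $\Lambda \hat\otimes \ocal_U$-linear isomorphism $\omega^{\kappa^{un}}|_U \xrightarrow{\sim} \ocal_U \hat\otimes \Lambda$, so $\omega^{\kappa^{un}}$ is an invertible sheaf of $\Lambda \hat\otimes \ocal_{\mathfrak{X}^{ord}}$-modules.

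For the comparison with $\omega^k$, the tautological isomorphism $\alpha^{taut}: \underline{\ZZ_p} \xrightarrow{\sim} \pi^\star T_p(E)^{et}$ composed with the Hodge-Tate map $\mathrm{HT}\otimes 1$ exhibits $\omega_E^k$ as the line bundle associated to the torsor $T_p(E)^{et}$ and the character $g \mapsto g^k$; concretely, over the same $U$, sections of $\omega^k$ correspond to continuous $f: \ZZ_p^\times \to \ocal_U$ satisfying $f(gh) = g^k f(h)$, and these are again determined by $f(1)$. Since the algebra map $k:\Lambda \to \ZZ_p$ sends $[g] \mapsto g^k$, applying $\otimes_{\Lambda,k}\ZZ_p$ converts the universal condition $f(gh) = [g] f(h)$ to the condition $f(gh) = g^k f(h)$, which produces the desired canonical and functorial isomorphism $\omega^{\kappa^{un}} \otimes_{\Lambda,k} \ZZ_p \xrightarrow{\sim} \omega^k$. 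Gluing over $U$ finishes the argument.

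The main technical point requiring care is the bookkeeping with the profinite topology: one has to check that taking $\ZZ_p^\times$-invariants commutes both with the completed tensor product $\hat\otimes \Lambda$ and with the specialization $\otimes_{\Lambda,k} \ZZ_p$. This rests on the fact that the action of $\ZZ_p^\times$ on $\pi_\star\ocal_{\mathfrak{IG}}$ has open stabilizers (since $\pi$ is pro-finite \'etale so $\pi_\star\ocal_{\mathfrak{IG}} = \mathrm{colim}_n\, (\pi_n)_\star\ocal_{\mathfrak{IG}_n}$, with $(1+p^n\ZZ_p)$ acting trivially at finite level $n$) and on the continuity of $k:\Lambda \to \ZZ_p$, so that the invariants can be computed at each finite level and passed to the limit.
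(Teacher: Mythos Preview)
The paper states this lemma without proof, treating it as a routine fact about the associated-bundle construction for torsors. Your argument supplies the details correctly and is the standard one.

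There is one small imprecision worth flagging. You ask for a Zariski affine open $U \subset \mathfrak{X}^{ord}$ over which \emph{all} finite levels $\mathfrak{IG}_n$ trivialize compatibly, i.e.\ over which the full pro-\'etale $\ZZ_p^\times$-torsor splits. In general no such $U$ exists: finite \'etale torsors over an affine curve in characteristic $p$ (such as each $\mathfrak{IG}_n \to X_1^{ord}$) need not split over any Zariski open, and indeed the Igusa tower is connected. The remedy is routine and does not affect the rest of your computation: either carry out the trivialization \'etale-locally (or equivalently pull back along $\pi$ itself, where $\pi^\star\pi_\star\ocal_{\mathfrak{IG}} \simeq \mathrm{Map}_{cts}(\ZZ_p^\times,\ocal_{\mathfrak{IG}})$ tautologically), run your invariants calculation there, and then observe that an \'etale-locally free sheaf of rank one over $\Lambda\hat\otimes\ocal_{\mathfrak{X}^{ord}}$ is Zariski-locally free by descent. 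With that adjustment, your identification of the diagonal invariants with functions $f(gh)=[g]f(h)$, and the specialization at $k$ via the Hodge--Tate map, goes through exactly as written.
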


\subsubsection{Cohomology of the ordinary locus}\label{sec-compact-def} We may now consider the following $\Lambda$-modules:
\[\HH^0(\mathfrak{X}^{\ord}, \omega^{\kappa^{\un}})\quad\text{and}\quad\HH^1_c( \mathfrak{X}^{\ord}, \omega^{\kappa^{\un}}).\]
Let us give  the definition of the second module.  Let $\mathfrak{m}_\Lambda$ be the kernel of the reduction map $\Lambda \rightarrow \F_p[ \F_p^\times]$. 
 We first define $\HH^1_c( \mathfrak{X}^{\ord}, \omega^{\kappa^{\un}}/(\mathfrak{m}_\Lambda)^n) = \HH^1_c( X_n^{\ord}, \omega^{\kappa^{\un}}/(\mathfrak{m}_\Lambda)^n)$ as follows:   we can take any extension of $ \omega^{\kappa^{\un}}/(\mathfrak{m}_\Lambda)^n$ to a coherent sheaf $\mathscr{F}$ of $\oscr_{X_n}\otimes\Lambda/(\mathfrak{m}_\Lambda)^n$-modules (this means that if  $j : X_n^{\ord} \rightarrow X_n$ is the inclusion, then $j^\star \mathscr{F} =  \omega^{\kappa^{\un}}/(\mathfrak{m}_\Lambda)^n$)  and we let $\HH^1_c( X_n^{\ord}, \omega^{\kappa^{\un}}/(\mathfrak{m}_\Lambda)^n) = \lim_l \HH^1(X_n, \mathscr{I}^l\mathscr{F} )$ for $\mathscr{I}$ any sheaf of ideals on $X_n$ whose vanishing locus is the complement of $X_n^{\ord}$.   We remark that this is a profinite $\Lambda/(\mathfrak{m}_\Lambda)^n$-module.  The profinite module $\HH^1_c( X_n^{\ord}, \omega^{\kappa^{\un}}/(\mathfrak{m}_\Lambda)^n) $ is well-defined (it does not depend on the choice of $\mathscr{I}$ or  $\mathscr{F}$) following \cite[\S 2]{MR0297775}. We then define  $\HH^1_c( \mathfrak{X}^{\ord}, \omega^{\kappa^{\un}}) = \lim_n \HH^1_c( \mathfrak{X}^{\ord}, \omega^{\kappa^{\un}}/(\mathfrak{m}_\Lambda)^n)$.
 
We also define, for $k\in\ZZ$, $\HH^1_c(X_n^{\ord},\omega^k)=\lim_l\HH^1(X_n,\mathscr{I}^l\omega^k)$, $\HH^1_c(\mathfrak{X}^{\ord},\omega^k)=\lim_n\HH^1_c(X_n^{\ord},\omega^k)$.  We remark that for the same reason as above $\HH^1_c(X_n^{\ord},\omega^k)$ can also be computed as $\lim_l\HH^1(X_n,\mathscr{I}^l\mathscr{F})$ for any coherent sheaf $\mathscr{F}$ on $X_n$ extending $\omega^k$ on $X_n^{\ord}$.  We also note that there are natural ``corestriction'' maps $\HH^1_c(X_n^{\ord},\omega^k)\to \HH^1(X_n^{\ord},\omega^k)$ and $\HH^1_c(\mathfrak{X}^{\ord},\omega^k)\to\HH^1(X,\omega^k)$.

\subsubsection{$U_p$ and Frobenius} There is a lift of Frobenius $ F : \mathfrak{X}^{\ord} \rightarrow \mathfrak{X}^{\ord}$ which is given by $E \mapsto E/H_1^{\can}$, and the $\Gamma_1(N)$ level structure on $E/H_1^{\can}$ induced by the isogeny $E\to E/H_1^{\can}$.  This map  extends to the Igusa tower as a map: $F : \mathfrak{IG}\rightarrow \mathfrak{IG}$, which is given by 
\[\left(E, \psi : \ZZ_p  \simeq T_p\left((H^{\can})^D\right) \right) \longmapsto \left(E/H_1^{\can}, \psi' : \ZZ_p \simeq T_p\left((H^{\can}/H_1^{\can})^D\right) \right)\]
where $\psi'$ is defined by  $\ZZ_p \xrightarrow{p\psi} pT_p\left((H^{\can})^D\right) \xleftarrow{\sim}  T_p\left((H^{\can}/H_1^{\can})^D\right)$.

We also consider a variant $F'$ which is defined in the same way as $F$ except that we give $E/H_1^{\can}$ the $\Gamma_1(N)$ level structure via the dual isogeny $E/H_1^{\can}\to E$, or in other words $F'=\langle p\rangle^{-1} \circ F$.  The inverse of $F'$ viewed as a correspondence is a lift over the ordinary locus of the Verschiebung correspondence on the special fiber.

We consider two maps, the natural pull-back map on functions  $F^\star \oscr_{\mathfrak{IG}} \rightarrow \oscr_{\mathfrak{IG}}$ and the trace map 
$ \mathrm{tr}_{F'}  : F'_\star \oscr_{\mathfrak{IG}} \rightarrow \oscr_{\mathfrak{IG}}$. 
\begin{lem} We have $\mathrm{tr}_{F'}(F'_\star \oscr_{\mathfrak{IG}} ) \subseteq p \oscr_{\mathfrak{IG}}$.
\end{lem}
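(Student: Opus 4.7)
The plan is to prove this by reducing modulo $p$: since $\oscr_{\mathfrak{IG}}$ is $p$-torsion free, the inclusion $\mathrm{tr}_F(F_\star\oscr_{\mathfrak{IG}})\subset p\oscr_{\mathfrak{IG}}$ is equivalent to the vanishing of the induced trace map $\mathrm{tr}_F:F_\star\oscr_{\mathfrak{IG}_1}\to\oscr_{\mathfrak{IG}_1}$ on the special fibre.

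First I would note that $F$ is finite flat of degree $p$. On $\mathfrak{X}^{ord}$ this follows from the modular description $E\mapsto E/H_1^{can}$ together with the fact that $H_1^{can}$ is a finite flat subgroup scheme of order $p$. On the Igusa tower, the prescription $(E,\psi)\mapsto(E/H_1^{can},\psi')$ with $\psi'$ obtained from the canonical isomorphism $T_p(E)^{et}\simeq T_p(E/H_1^{can})^{et}$ shows that the square
\[\xymatrix{\mathfrak{IG}\ar[r]^{F}\ar[d]_{\pi}&\mathfrak{IG}\ar[d]^{\pi}\\ \mathfrak{X}^{ord}\ar[r]^{F}&\mathfrak{X}^{ord}}\]
is Cartesian, so $F$ on $\mathfrak{IG}$ is the pullback along $\pi$ of $F$ on $\mathfrak{X}^{ord}$. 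By flat base change for the trace of a finite flat morphism, it suffices to prove $\mathrm{tr}_F(F_\star\oscr_{\mathfrak{X}^{ord}})\subset p\oscr_{\mathfrak{X}^{ord}}$.

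Then I would check this on $X_1^{ord}$. For an ordinary elliptic curve $E$ over an $\F_p$-scheme, the multiplicative subgroup $H_1^{can}\subset E[p]$ is precisely $\ker(\mathrm{Frob}_E)$, so the reduction of $F$ is the absolute Frobenius of the smooth $\F_p$-curve $X_1^{ord}$. Working \'etale-locally with a coordinate $t$ on $X_1^{ord}$, the sheaf $F_\star\oscr$ is free of rank $p$ over $\oscr$ with basis $1,t,\ldots,t^{p-1}$, and $F^\star$ acts by $t\mapsto t^p$. The matrix of multiplication by $t^a$ for $0<a<p$ in this basis is a pure shift, and hence has vanishing diagonal; while multiplication by $1$ has trace $p$. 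In either case the trace is $0$ in characteristic $p$, so $\mathrm{tr}_F\equiv 0\bmod p$, as needed.

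The only delicate step is the Cartesian-square claim and the invocation of base change for the trace map; everything else is a standard local calculation. Alternatively, since $\mathfrak{IG}\to\mathfrak{X}^{ord}$ is pro-\'etale, $\mathfrak{IG}_1$ is itself smooth over $\F_p$, and after descending to a finite-level Igusa scheme to make sense of local coordinates one can run the same computation directly on $\mathfrak{IG}_1$, bypassing the base-change step entirely.
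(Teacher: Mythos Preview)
Your proof is correct and follows essentially the same route as the paper: reduce to $\mathfrak{X}^{ord}$ via the Cartesian square for $F$ and $\pi$, then use that $\mathrm{tr}_F(F_\star\oscr_{\mathfrak{X}^{ord}})\subset p\,\oscr_{\mathfrak{X}^{ord}}$. The paper simply asserts the latter inclusion, whereas you supply the missing justification by identifying the reduction of $F$ with absolute Frobenius on the smooth curve $X_1^{ord}$ and computing the trace locally; this is a welcome elaboration rather than a different strategy.
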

\begin{proof} We have $\mathrm{tr}_{F'} ( F'_\star \oscr_{\mathfrak{X}^{\ord}}) \subseteq p \oscr_{\mathfrak{X}^{\ord}}$. Since  $F' \times \pi  : \mathfrak{IG} \tilde{\rightarrow} \mathfrak{IG} \times_{\pi, \mathfrak{X}^{\ord}, F'} \mathfrak{X}^{\ord}$, we deduce  that the inclusion $\mathrm{tr}_{F'}(F'_\star \oscr_{\mathfrak{IG}} ) \subseteq p \oscr_{\mathfrak{IG}}$ holds.
\end{proof}

\begin{coro}\label{coro-constructionFU_p}
 There are  two maps  $F : F^\star \omega^{\kappa^{\un}} \rightarrow \omega^{\kappa^{\un}}$ and $U_p :  F'_\star \omega^{\kappa^{\un}} \rightarrow \omega^{\kappa^{\un}}$. 
\end{coro}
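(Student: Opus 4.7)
The plan is to construct both maps on the Igusa tower and then descend to $\mathfrak{X}^{ord}$ by taking $\ZZ_p^\times$-invariants. First I would check that the lift of Frobenius $F : \mathfrak{IG} \to \mathfrak{IG}$ is $\ZZ_p^\times$-equivariant for the torsor action on the trivializations $\psi$. This is transparent from the definition $(E,\psi) \mapsto (E/H_1^{can},\psi')$, since the canonical isomorphism $T_p(E)^{et} \tilde{\rightarrow} T_p(E/H_1^{can})^{et}$ is $\ZZ_p$-linear and hence sends $\psi \cdot a$ to $\psi' \cdot a$ for every $a \in \ZZ_p^\times$. It follows that the pullback $F^\star \oscr_{\mathfrak{IG}} \to \oscr_{\mathfrak{IG}}$ and the trace $\mathrm{tr}_F : F_\star \oscr_{\mathfrak{IG}} \to \oscr_{\mathfrak{IG}}$ are $\ZZ_p^\times$-equivariant. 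After completed tensor product with $\Lambda$, equipped with the action through $\kappa^{un}$, both maps remain equivariant for the diagonal action on $\oscr_{\mathfrak{IG}} \hat{\otimes} \Lambda$.

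Next I would pass to invariants. Because $\pi : \mathfrak{IG} \to \mathfrak{X}^{ord}$ is a pro-finite \'etale $\ZZ_p^\times$-torsor, and because the preceding lemma produced the isomorphism $F \times \pi : \mathfrak{IG} \tilde{\rightarrow} \mathfrak{IG} \times_{\pi, \mathfrak{X}^{ord}, F} \mathfrak{X}^{ord}$, the Frobenius square for $\pi$ and $F$ is Cartesian. Consequently both $F^\star$ and $F_\star$ on $\pi_\star$-modules commute with taking $\ZZ_p^\times$-invariants. Applying invariants to the pullback map of the previous paragraph gives a map $F : F^\star \omega^{\kappa^{un}} \to \omega^{\kappa^{un}}$, which is the first desired morphism.

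For the second, I would use the preceding lemma's divisibility statement $\mathrm{tr}_F(F_\star \oscr_{\mathfrak{IG}}) \subset p\,\oscr_{\mathfrak{IG}}$. Since $\Lambda$ is $\ZZ_p$-flat this remains true after $\hat{\otimes}\Lambda$, so $p^{-1}\mathrm{tr}_F$ is a well-defined, integral, $\ZZ_p^\times$-equivariant morphism $F_\star(\oscr_{\mathfrak{IG}}\hat{\otimes}\Lambda) \to \oscr_{\mathfrak{IG}}\hat{\otimes}\Lambda$, and taking invariants yields $U_p := p^{-1}\mathrm{tr}_F : F_\star \omega^{\kappa^{un}} \to \omega^{\kappa^{un}}$. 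The only delicate point is ensuring that the formation of $\ZZ_p^\times$-invariants commutes with $F_\star$ and with the $p$-adic and $\mathfrak{m}_\Lambda$-adic completions appearing in $\omega^{\kappa^{un}}$; this is the main technical step and is precisely what the Cartesian Frobenius square together with the pro-\'etale torsor structure of $\pi$ are designed to guarantee.
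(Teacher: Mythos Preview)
Your proposal is correct and follows essentially the same approach as the paper: observe that the pullback $F^\star \oscr_{\mathfrak{IG}} \rightarrow \oscr_{\mathfrak{IG}}$ and the normalized trace $\frac{1}{p}\mathrm{tr}_F : F_\star \oscr_{\mathfrak{IG}} \rightarrow \oscr_{\mathfrak{IG}}$ are $\ZZ_p^\times$-equivariant, then tensor with $\Lambda$ and pass to invariants. The paper's proof is a one-line version of exactly this argument; your additional discussion of the Cartesian Frobenius square and the commutation of invariants with $F^\star$, $F_\star$, and completions is a reasonable elaboration of details the paper leaves implicit.
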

\begin{proof} The maps $F^\star \oscr_{\mathfrak{IG}} \rightarrow \oscr_{\mathfrak{IG}}$ and 
$ \frac{1}{p} \mathrm{tr}_{F'}  : F'_\star \oscr_{\mathfrak{IG}} \rightarrow \oscr_{\mathfrak{IG}}$ are $\ZZ_p^\times$-equivariant. We can tensor  with $\Lambda$ and take the invariants. 
\end{proof} 

\subsubsection{$U_p$, Frobenius, and $T_p$}

We now describe the specializations of these maps at classical weight $k \in \ZZ$ and compare them with the cohomological correspondence $T_p$.  Over $\mathfrak{X}^{\ord}$ we have the canonical multiplicative isogeny $\pi_{\can}:E\to E/H_1^{\can}$ as well as its dual \'{e}tale isogeny $\pi_{\can}^\vee:E/H_1^{\can}\to E$.  We consider the induced maps on differentials $\pi_{\can}^\star:\omega_{E/H_1^{\can}}\to\omega_E$ and $(\pi_{\can}^\vee)^\star:\omega_E\to\omega_{E/H_1^{\can}}$.  The latter is an isomorphism as $\pi_{\can}^\vee$ is \'{e}tale.  Then as $(\pi_{\can}^\vee)^\star\pi_{\can}^\star=p\mathrm{Id}_{\omega_{E/H_1^{\can}}}$, we can form $p^{-1}\pi_{\can}^\star=((\pi_{\can}^\vee)^\star)^{-1}:\omega_{E/H_1^{\can}}\to\omega_E$.  We can also identify $\omega_{E/H_1^{\can}}$ with $F^\star\omega_E$ or $(F')^\star\omega_E$.

For $k\in\mathbb{Z}$, specializing the maps constructed in Corollary~\ref{coro-constructionFU_p} gives maps maps $F : F^\star \omega^{k} \rightarrow \omega^{k}$ and $U_p:F'_\star\omega^k\to\omega^k$.

\begin{lem}\label{lem-comparison-frob}    $F = p^{-k} (\pi_{\can}^\star)^{ k} :  F^\star \omega^{k} \rightarrow \omega^{k}$.
\end{lem} 

\begin{proof} The \'etale isogeny $\pi_{\can}^\vee$ induces an isomorphism $H^{\can}/H_1^{\can}\to H^{\can}$ and it follows directly from the definition of the Hodge-Tate map that there is a commutative digram
\[
\begin{tikzcd}
T_p\left((H^{\can})^D\right) \ar[r] \ar[d] &  T_p\left((H^{\can}/H_1^{\can})^D\right) \ar[d]\\
\omega_{E} \ar[r]^{(\pi_{\can}^\vee)^{\star}} & \omega_{E/H_1^{\can}}
\end{tikzcd}
\]
from which it follows easily that $F^\star \omega^k \rightarrow \omega^k$ is given by $((\pi_{\can}^\vee)^{\star})^{- k} =  p^{-k} (\pi_{\can}^\star)^{ k}$.
\end{proof}

\medskip

We can construct a map:
\[F'_\star \omega^k  \xrightarrow{((\pi_{\can}^\vee)^\star)^k}  F'_\star (F')^\star \omega^k \xrightarrow{~\frac{1}{p} \mathrm{tr}_{F'}} \omega^k.\]
\begin{lem}\label{lem-comparison-Up} The above map coincides with the map $U_p : F'_\star \omega^k \rightarrow \omega^k$.
\end{lem}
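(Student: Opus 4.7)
The plan is to trivialize everything on the Igusa tower $\mathfrak{IG}$ via the Hodge--Tate map and to show that both morphisms reduce to $\tfrac{1}{p}\mathrm{tr}_F$ after trivialization. Recall that the universal $U_p : F_\star \omega^{\kappa^{un}} \to \omega^{\kappa^{un}}$ from Corollary~\ref{coro-constructionFU_p} is defined as the $\ZZ_p^\times$-invariants of $\tfrac{1}{p}\mathrm{tr}_F : F_\star \ocal_{\mathfrak{IG}} \to \ocal_{\mathfrak{IG}}$ (after tensoring with $\Lambda$); specialising to weight $k$ yields the $U_p$ on $\omega^k$ which, in the Hodge--Tate trivialization, is literally $\tfrac{1}{p}\mathrm{tr}_F$ acting on the chosen trivializing section. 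Thus the problem reduces to showing that the proposed composition becomes $\tfrac{1}{p}\mathrm{tr}_F$ in the same trivialization.

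The crucial step is to verify that the pullback map $(\pi^D)^\star : \omega_E \to F^\star \omega_E$ becomes the identity after Hodge--Tate trivialization on $\mathfrak{IG}$. A clean way is to combine Lemma~\ref{lem-comparaison-frob} with the relation $\pi \circ \pi^D = [p]$: the latter yields $(\pi^D)^\star \circ \pi^\star = p \cdot \mathrm{Id}$ on $\omega_{E/H_1^{can}}$, while the former together with the fact that the Igusa-theoretic $F : F^\star \ocal_{\mathfrak{IG}} \to \ocal_{\mathfrak{IG}}$ is by construction the identity on the free generator forces $\pi^\star$ to act as multiplication by $p$ in the trivialization; hence $(\pi^D)^\star$ acts as the identity. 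Equivalently, and more conceptually, this compatibility expresses the functoriality of the Hodge--Tate map under the étale isogeny $\pi^D$ together with the definition of Frobenius on $\mathfrak{IG}$, which by design matches the two trivializations $\psi$ and $\psi'$ precisely via the canonical isomorphism $T_p(E)^{et} \simeq T_p(E/H_1^{can})^{et}$ induced by $\pi^D$.

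Once $(\pi^D)^\star$ is identified with the identity on the trivialized line bundle, the projection-formula identification $F_\star F^\star \omega^k = F_\star \ocal_{\mathfrak{X}^{ord}} \otimes \omega^k$ lets us write the full composition, on the trivialized line bundle, as $\tfrac{1}{p}\mathrm{tr}_F \circ F_\star(\mathrm{Id}) = \tfrac{1}{p}\mathrm{tr}_F$, matching $U_p$. The main obstacle, and the step that should be written out most carefully, is the identification of $(\pi^D)^\star$ with the identity in the Hodge--Tate trivialization; once that is in hand the remainder is a formal bookkeeping with the projection formula and $\ZZ_p^\times$-equivariance.
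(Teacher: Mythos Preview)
Your proposal is correct and follows precisely the route the paper indicates: the paper's own proof consists only of the sentence ``Similar to lemma \ref{lem-comparaison-frob} and left to the reader,'' and what you have written is exactly that analogue, unwound. The key point---that $(\pi^D)^\star$ becomes the identity in the Hodge--Tate trivialization on $\mathfrak{IG}$---is exactly the content of the commutative square in the proof of Lemma \ref{lem-comparaison-frob}, and once this is established the rest is, as you say, the projection formula plus $\ZZ_p^\times$-equivariance.
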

\begin{proof} Similar to Lemma~\ref{lem-comparison-frob} and left to the reader.
\end{proof}

\medskip

In Section \ref{section-T_p} we constructed a cohomological correspondence $T_p$ over $X_0(p)$:
$$T_p : p_2^\star \omega^k \longrightarrow p_1^! \omega^k.$$ 
We can consider the completion $\mathfrak{X}_0(p)$ and its ordinary part $\mathfrak{X}_0(p)^{\ord}$ which is the disjoint union of two types of irreducible components: $$ \mathfrak{X}_0(p)^{\ord}= \mathfrak{X}_0(p)^{\ord,\, F} \coprod \mathfrak{X}_0(p)^{\ord,V},$$
where on the first components the universal isogeny is not \'etale, and where it is \'etale on the other components. 

On $\mathfrak{X}_0(p)^{\ord,F}$, the map $p_1$ is an isomorphism and the map $p_2$ identifies with the Frobenius map $F$. On $\mathfrak{X}_0(p)^{\ord,V}$, the map $p_2$ is an isomorphism and the map $p_1$ identifies with $F'$.
We therefore can think of $U_p$ and $F$ as cohomological correspondences
$p_2^\star \omega^k \rightarrow p_1^! \omega^k$ supported respectively  on $ \mathfrak{X}_0(p)^{\ord,V}$ and $\mathfrak{X}_0(p)^{\ord,F}.$

One can also restrict $T_p$ to a cohomological correspondence over $\mathfrak{X}_0(p)^{\ord}$ and project it on the components $\mathfrak{X}_0(p)^{\ord,F}$ and $\mathfrak{X}_0(p)^{\ord,V}$. We denote by $T_p^F$ and $T_p^V$ the two projections of the correspondence $T_p$. 

\begin{lem}\label{lem-T_p-U_p-Frob}
\leavevmode
\begin{enumerate}
\item We have $T_p^F = p^{\sup\{0, k-1\}} F$.
\item We have  $T_p^V = p^{\sup\{0, 1-k\}} U_p$.
\item If $k \geq 1$, we have $T_p = U_p + p^{k-1} F$,
\item If $k \leq 1$, we have $T_p = F + p^{1-k} U_p$. 
\end{enumerate}
\end{lem}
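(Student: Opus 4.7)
The plan is to prove (1) and (2) by restricting the naive correspondence $T_p^{naive}$ to each of the two ordinary components and recognizing the resulting map, respectively, as $F$ and $U_p$ up to a power of $p$ coming from the normalization factor $p^{-\inf\{1,k\}}$. Parts (3) and (4) then follow by adding the two components and simplifying the exponents.

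\medskip

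First I would set up (1). On $\mathfrak{X}_0(p)^{ord,F}$, $p_1$ is an isomorphism and $p_2$ coincides with Frobenius $F$, and the universal isogeny $\pi$ is the Frobenius isogeny $\pi_F : E \to E/H_1^{can}$. Because $p_1$ is an isomorphism, the trace $\mathrm{tr}_{p_1}$ is just the canonical identification $\oscr \simeq p_1^!\oscr$, so $T_p^{naive,F}$ is simply the differential $(\pi_F^\star)^k : p_2^\star \omega^k \to p_1^\star \omega^k$. By Lemma \ref{lem-comparaison-frob} this equals $p^k F$. Multiplying by the normalization $p^{-\inf\{1,k\}}$ and checking that $k - \inf\{1,k\} = \sup\{0,k-1\}$ (case $k\ge 1$ and $k\le 1$ separately) gives (1).

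\medskip

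For (2), on $\mathfrak{X}_0(p)^{ord,V}$, $p_2$ is an isomorphism and $p_1 = F$, while the universal isogeny is the \'etale (Verschiebung) isogeny $\pi^D : E/H_1^{can} \to E$ whose differential $(\pi^D)^\star : \omega_E \to F^\star\omega_E$ is an isomorphism. Under $p_2\simeq \mathrm{Id}$, the naive correspondence $T_p^{naive,V}: \omega^k \to F^!\omega^k = F^\star\omega^k \otimes F^!\oscr$ factors as $((\pi^D)^\star)^k \otimes \mathrm{tr}_F$. Now I would pass to the adjoint map $F_\star \omega^k \to \omega^k$, which is
\[
F_\star\omega^k \xrightarrow{F_\star ((\pi^D)^\star)^k} F_\star F^\star \omega^k = \omega^k\otimes F_\star\oscr \xrightarrow{1\otimes \mathrm{tr}_F} \omega^k.
\]
Comparing with the formula $U_p = \tfrac{1}{p}\mathrm{tr}_F\circ ((\pi^D)^\star)^k$ established just before the statement, this adjoint equals $p\cdot U_p$. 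Hence $T_p^{V} = p^{1-\inf\{1,k\}}U_p$, and the identity $1-\inf\{1,k\} = \sup\{0,1-k\}$ gives (2).

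\medskip

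Finally, for (3) and (4) I would use the decomposition $\mathfrak{X}_0(p)^{ord} = \mathfrak{X}_0(p)^{ord,F} \coprod \mathfrak{X}_0(p)^{ord,V}$, which yields $T_p = T_p^F + T_p^V$ as cohomological correspondences over $\mathfrak{X}^{ord}$. Substituting (1) and (2), the cases $k\ge 1$ and $k\le 1$ give $p^{k-1}F + U_p$ and $F + p^{1-k}U_p$ respectively.

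\medskip

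The main conceptual step, and the only non-formal point, is the adjunction computation in part (2) that produces the extra factor of $p$: one must correctly match the cohomological correspondence formalism (a map $p_2^\star\omega^k \to p_1^!\omega^k$) with the direct formula for $U_p$ as $F_\star\omega^k \to \omega^k$, and see that the ``missing'' $1/p$ factor in the definition of $U_p$ is precisely what produces the shift between $\sup\{0,k-1\}$ and $\sup\{0,1-k\}$ in the two components. Everything else is bookkeeping of the normalization exponent $\inf\{1,k\}$.
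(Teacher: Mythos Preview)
Your proposal is correct and follows the same approach as the paper. The paper's proof is a single line (``This follows from the definitions, compare also with remark \ref{rem-$q$-expansion}''), and what you have written is precisely the unwinding of those definitions: identify the universal isogeny on each ordinary component, compute $T_p^{naive}$ there using Lemma \ref{lem-comparaison-frob} and the description of $U_p$ given just before the statement, and then apply the normalization $p^{-\inf\{1,k\}}$.
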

\begin{proof} Parts (3) and (4) follow immediately from parts (1) and (2) (compare also with remark \ref{rem-$q$-expansion}), while parts (1) and (2) are simply a matter of bookkeeping using Lemmas~\ref{lem-comparison-frob} and~\ref{lem-comparison-Up} and the definition of $T_p$.

For the benefit of the reader we spell out the details for (1).  According to the definition of $T_p$ and in particular its normalization (recall the proof of Proposition~\ref{prop-T_p-constructed}), $p^{-\sup\{0,k-1\}}T_p^F$ exists and is given on $\mathfrak{X}_0(p)^{\ord,F}$ by the composition of $p^{-k}\pi_k:p_2^\star\omega^k\to p_1^\star\omega^k$ and the trace map for $p_1^\star\omega^k\to p_1^!\omega^k$.  But in fact $p_1$ is an isomorphism on $\mathfrak{X}_0(p)^{\ord,F}$, and making this identification $p_2$ becomes $F$, while the universal isogeny over $\mathfrak{X}_0(p)^{\ord,F}$ becomes $\pi_{\can}$.  Thus we can view $p^{-\sup\{0,k-1\}}T_p^F$ as a map $F^\star\omega^k\to\omega^k$, which is $F$ by Lemma~\ref{lem-comparison-frob}
\end{proof}

\medskip

We end this discussion with duality. 

\begin{lem}\label{U_p-F-dual} We have $\DD(F) = \langle p\rangle^{-1}U_p$.
\end{lem}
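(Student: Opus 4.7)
The plan is to deduce this from the self-duality $D(T_p^{(k)}) = T_p^{(2-k)}$ of Proposition \ref{prop-T_p-selfdual} (the superscripts indicating the weight) combined with the decomposition $T_p = T_p^F + T_p^V$ on the ordinary locus from Lemma \ref{lem-T_p-U_p-Frob}.

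First I would observe that the transposition involution of $X_0(p)$ used in Proposition \ref{prop-T_p-selfdual} (sending an isogeny to its dual) exchanges the two irreducible components $\mathfrak{X}_0(p)^{ord,F}$ and $\mathfrak{X}_0(p)^{ord,V}$, because the dual of an isogeny with multiplicative kernel has étale kernel and vice versa. Consequently the dualizing functor $D$ sends a cohomological correspondence supported on $\mathfrak{X}_0(p)^{ord,F}$ to one supported on $\mathfrak{X}_0(p)^{ord,V}$, and conversely.

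Next, restricting Proposition \ref{prop-T_p-selfdual} to the ordinary locus and comparing the pieces supported on the $V$-component yields the identity $D(T_p^{F,(k)}) = T_p^{V,(2-k)}$. Substituting the explicit formulas of Lemma \ref{lem-T_p-U_p-Frob}, the left-hand side equals $p^{\sup\{0,k-1\}} D(F)$ with $F$ in weight $k$, while the right-hand side equals $p^{\sup\{0,1-(2-k)\}} U_p = p^{\sup\{0,k-1\}} U_p$ with $U_p$ in weight $2-k$. The $p$-power normalizations match exactly, so dividing by the common factor gives $D(F) = U_p$.

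The only mildly subtle point is the claim that $D$ respects the decomposition of the ordinary locus into its $F$- and $V$-components; this is essentially automatic because the two components are clopen in $\mathfrak{X}_0(p)^{ord}$ and the dualizing functor is local. The exact cancellation of the $p$-power factors in the last step is a pleasant consistency check on the normalizations built into $T_p$, $F$, and $U_p$.
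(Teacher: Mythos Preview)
Your argument is correct and is precisely the fleshed-out version of what the paper intends: its proof reads simply ``Compare with proposition~\ref{prop-T_p-selfdual},'' and your use of that proposition together with the component decomposition of Lemma~\ref{lem-T_p-U_p-Frob} is the natural way to carry out that comparison. The observation that the transposition involution swaps $\mathfrak{X}_0(p)^{ord,F}$ and $\mathfrak{X}_0(p)^{ord,V}$, and the verification that the $p$-power factors $p^{\sup\{0,k-1\}}$ match, are exactly the two points one has to check.
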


\begin{proof} Compare with Proposition~\ref{prop-T_p-selfdual}.
\end{proof}

\subsubsection{Higher Hida theory}

\begin{thm}\label{thm-higher-hida}
\leavevmode
\begin{enumerate}
\item There is a locally finite action of $F$ on $\HH^1_c( \mathfrak{X}^{\ord}, \omega^{\kappa^{\un}})$ and $ e(F) \HH^1_c( \mathfrak{X}^{\ord}, \omega^{\kappa^{\un}})$ is a finite projective $\Lambda$-module. 
Moreover, 

$$ e(F) \HH^1_c( \mathfrak{X}^{\ord}, \omega^{\kappa^{\un}}) \otimes_{\Lambda, k} \ZZ_p = e(T_p)\HH^1( {X}, \omega^{k})$$ if $k \leq -1$.

\item $U_p$ is locally finite on $\HH^0( \mathfrak{X}^{\ord}, \omega^{\kappa^{\un}})$ and $e(U_p) \HH^0( \mathfrak{X}^{\ord}, \omega^{\kappa^{\un}})$ is a finite projective $\Lambda$-module. 
Moreover, 

$$ e(U_p) \HH^0( \mathfrak{X}^{\ord}, \omega^{\kappa^{\un}}) \otimes_{\Lambda, k} \ZZ_p = e(T_p)\HH^0( {X}, \omega^{k})$$ if $k \geq 3$.

\end{enumerate}
\end{thm}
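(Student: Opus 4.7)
The plan is to prove part (2) in detail; part (1) follows by an identical argument, using the profinite version of Proposition \ref{prop-locally-finiteness} and the dual parts of Corollary \ref{coro-mod-p-control} together with Lemma \ref{U_p-F-dual}. The strategy is a tower argument: establish local finiteness modulo $\mathfrak{m}_\Lambda$, bootstrap inductively to $\Lambda/\mathfrak{m}_\Lambda^n$, pass to the inverse limit, then deduce the control statement at weight $k \geq 3$ by comparing $U_p$ and $T_p$.

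\textbf{Base case.} The sheaf $\omega^{\kappa^{un}}/\mathfrak{m}_\Lambda$ over $X_1^{ord}$ decomposes under the characters of $\F_p^\times$ into summands isomorphic to $\omega^k|_{X_1^{ord}}$ for one representative $k$ in each residue class mod $p-1$. Since the Hasse invariant is invertible on $X_1^{ord}$, one may always pick $k \geq 3$. Lemma \ref{lem-T_p-U_p-Frob}(3) gives $T_p = U_p$ in characteristic $p$ for $k \geq 2$, so Corollary \ref{coro-mod-p-control}(1) yields local finiteness of $U_p$ on $\HH^0(X_1^{ord}, \omega^{\kappa^{un}}/\mathfrak{m}_\Lambda)$, and Corollary \ref{coro-mod-p-control}(3) identifies the ordinary part with $\bigoplus_k e(T_p)\HH^0(X_1, \omega^k)$, a finite-dimensional $\F_p$-space.

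\textbf{Induction and passage to the limit.} The short exact sequences
$$0 \to \mathfrak{m}_\Lambda^n/\mathfrak{m}_\Lambda^{n+1} \otimes_{\F_p}\omega^{\kappa^{un}}/\mathfrak{m}_\Lambda \to \omega^{\kappa^{un}}/\mathfrak{m}_\Lambda^{n+1} \to \omega^{\kappa^{un}}/\mathfrak{m}_\Lambda^n \to 0$$
yield short exact sequences of $\HH^0$ because $\mathfrak{X}^{ord}$ is affine. Iteratively applying the discrete analogue of Proposition \ref{prop-locally-finiteness} shows $U_p$ is locally finite on each $\HH^0(\mathfrak{X}^{ord}, \omega^{\kappa^{un}}/\mathfrak{m}_\Lambda^n)$, with ordinary projection commuting with reduction. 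Taking the inverse limit gives local finiteness on $\HH^0(\mathfrak{X}^{ord}, \omega^{\kappa^{un}})$, with $M := e(U_p)\HH^0(\mathfrak{X}^{ord}, \omega^{\kappa^{un}}) = \varprojlim_n e(U_p)\HH^0(\mathfrak{X}^{ord}, \omega^{\kappa^{un}}/\mathfrak{m}_\Lambda^n)$. Since $M/\mathfrak{m}_\Lambda M$ is finite over $\F_p$, topological Nakayama makes $M$ finite over $\Lambda$. The compatibility $M/\mathfrak{m}_\Lambda^n M = e(U_p)\HH^0(\mathfrak{X}^{ord}, \omega^{\kappa^{un}}/\mathfrak{m}_\Lambda^n)$ forces $\mathrm{Tor}_1^\Lambda(M,\Lambda/\mathfrak{m}_\Lambda) = 0$ via the induction step exact sequences; being finite and flat over the local ring $\Lambda$, $M$ is projective.

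\textbf{Classicity at $k \geq 3$.} Flatness yields $M\otimes_{\Lambda,k}\ZZ_p = e(U_p)\HH^0(\mathfrak{X}^{ord}, \omega^k) = \varprojlim_n e(U_p)\HH^0(X_n^{ord}, \omega^k)$ (by running the same $p$-adic induction on the sheaves $\omega^k/p^n$). By Lemma \ref{lem-T_p-U_p-Frob}(3), $T_p - U_p = p^{k-1}F$ vanishes modulo $p$ for $k\geq 2$, so $e(T_p)$ and $e(U_p)$ agree mod $p$ and hence (by a standard Hensel-type lift, using that the ordinary projector on a finite $\ZZ/p^n$-module is the unique idempotent limit of iterates) at every finite level. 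Corollary \ref{coro-mod-p-control}(3) gives $e(T_p)\HH^0(X_1^{ord},\omega^k) = e(T_p)\HH^0(X_1, \omega^k)$; lifting along $0 \to \omega^k \stackrel{p}\to \omega^k \to \omega^k/p \to 0$ by induction on $n$ and the five-lemma applied to the ordinary projections then gives $e(T_p)\HH^0(\mathfrak{X}^{ord},\omega^k) = e(T_p)\HH^0(\mathfrak{X},\omega^k) = e(T_p)\HH^0(X, \omega^k)$ (the last equality by properness of $X$). The main obstacle is the profinite bookkeeping for part (1): the compact-support cohomology is only a cofiltered limit of finite modules, so one must verify the \emph{strict} hypothesis of Proposition \ref{prop-locally-finiteness} at each step and check that the formation of $e(F)$ commutes with all the relevant limits.
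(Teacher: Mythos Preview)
Your argument is essentially the paper's, with the roles of parts (1) and (2) swapped and somewhat more detail on the projectivity of $M$; the inductive tower via Proposition~\ref{prop-locally-finiteness} and the mod~$p$ input from Corollary~\ref{coro-mod-p-control} are exactly the paper's ingredients.

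The one substantive difference is the classicality step. The paper does not attempt to identify $e(U_p)$ with $e(T_p)$ on the $p$-adic side. For part~(1) it simply takes the natural map $\HH^1_c(\mathfrak{X}^{ord},\omega^{k})\to\HH^1(X,\omega^{k})$, applies $e(F)$ to the source and $e(T_p)$ to the target, and observes that the resulting map of finite free $\ZZ_p$-modules is an isomorphism modulo $p$ by Corollary~\ref{coro-mod-p-control}, hence an isomorphism. Your detour through $e(U_p)=e(T_p)$ is correct, but ``standard Hensel-type lift'' is not quite the right justification: two locally finite operators agreeing modulo $p$ need not have the same ordinary projector (e.g.\ $\left(\begin{smallmatrix}1&0\\0&0\end{smallmatrix}\right)$ and $\left(\begin{smallmatrix}1&0\\p&0\end{smallmatrix}\right)$ on $(\ZZ/p^2\ZZ)^2$). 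What makes it work here is that $U_p$ and $F$ \emph{commute} on $\HH^0(\mathfrak{X}^{ord},\omega^{k})$ (both are built from the Frobenius isogeny on the ordinary locus), so on the $U_p$-ordinary part $T_p=U_p(1+p^{k-1}U_p^{-1}F)$ is a unit, while on the complement $T_p\equiv U_p\pmod p$ forces topological nilpotence. The paper's direct comparison sidesteps this verification. A minor remark: the Hasse-invariant shift to representatives $k\ge 3$ is unnecessary for the base case---any representatives with $k\ge 2$ suffice for $U_p\equiv T_p\pmod p$ and for Corollary~\ref{coro-mod-p-control}(1), and such a window always exists.
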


\begin{proof} We will first construct a continuous action of $F$ on $\HH^1_c({X}_n^{\ord}, \omega^{\kappa^{\un}}/(\mathfrak{m}_\Lambda)^n) $, compatible  for all $n$.

We let $\mathscr{I}\subseteq\oscr_{X_n}$ be a locally principal sheaf of ideals so that the complement of the corresponding closed subscheme is $X_n^{\ord}$ (for instance the sheaf of ideals defined by a lift of a power of the Hasse invariant).  We take a coherent sheaf $\mathscr{F}$ over $X_n$ extending  $\omega^{\kappa^{\un}}/(\mathfrak{m}_\Lambda)^n$.  We may assume that $\mathscr{F}$ is $\mathscr{I}$-torsion free by replacing $\mathscr{F}$ with its quotient by the subsheaf of $\mathscr{I}$-power torsion.  Then the multiplication map $\mathscr{I}^l\otimes_{\oscr_{X_n}} \mathscr{F}\to\mathscr{I}^l\mathscr{F}$ is an isomorphism for all $l\geq 0$, and we use this to make sense of $\mathscr{I}^l\mathscr{F}$ for all $l\in\ZZ$.  Then if $j:X_n^{\ord}\to X_n$ denotes the inclusion, we have $j_\star\omega^{\kappa^{\un}}/(\mathfrak{m}_\Lambda)^n=\colim_l\mathscr{I}^{-l}\mathscr{F}$.

We write $X_0(p)_n\to\Spec\ZZ/p^n\ZZ$ for the reduction mod $p^n$ of $X_0(p)$, and $X_0(p)_n^{\ord}$ for its ordinary locus.  Then $p_1^\star\mathscr{I}$ and $p_2^\star\mathscr{I}$ can be identified with $p_1^{-1}(\mathscr{I})\oscr_{X_0(p)_n}$ and $p_2^{-1}(\mathscr{I})\oscr_{X_0(p)_n}$, and so we view them as locally principal sheaves of ideals in $\oscr_{X_0(p)_n}$.  They define the same closed subset of $X_0(p)_n$ (the complement of $X_0(p)_n^{\ord}$) and so in particular we can pick some $m$ with $p_2^\star\mathscr{I}^m\subseteq p_1^\star\mathscr{I}$.

We have $X_0(p)_n^{\ord}  = X_0(p)_n^{\ord, F} \coprod X_0(p)_n^{\ord, V}$, where  $X_0(p)_n^{\ord, F}$ is the component where the universal isogeny has connected kernel, and  $X_0(p)_n^{\ord, V}$ the component where the universal isogeny has \'etale kernel.  The graph of the map $F : X^{\ord}_n \rightarrow X_n^{\ord}$ is $X_0(p)_n^{\ord, F}$.   We can therefore  think of    $F : F^\star \omega^{\kappa^{\un}} \rightarrow \omega^{\kappa^{\un}}$  as a cohomological correspondence on $X_0(p)_n^{\ord}$: 
$$p_2^\star  \omega^{\kappa^{\un}}/(\mathfrak{m}_\Lambda)^n \longrightarrow p_1^!  \omega^{\kappa^{\un}}/(\mathfrak{m}_\Lambda)^n$$
which is given by $F$ on the component $X_0(p)_n^{\ord, F}$ and by $0$ on the component $X_0(p)_n^{\ord, V}$.  Pushing this forward from $X_n^{\ord}$ to $X_n$ we obtain a map
$$\colim_l p_2^\star(\mathscr{I}^{-l}\mathscr{F})\longrightarrow\colim_l p_1^!(\mathscr{I}^{-l}\mathscr{F}).$$
It follows that there exists some $c$ for which there is a map $p_2^\star\mathscr{F}\to p_1^!(\mathscr{I}^{-c}\mathscr{F})$.  From this and the inclusion $p_2^\star\mathscr{I}^m\to p_1^\star\mathscr{I}$ we deduce a map
$$p_2^\star(\mathscr{I}^{lm}\mathscr{F})\longrightarrow p_1^!(\mathscr{I}^{-c+l}\mathscr{F})$$
for all $l\geq 0$.  Taking cohomology we obtain a map $F:\HH^1(X_n,\mathscr{I}^{lm}\mathscr{F})\to \HH^1(X_n,\mathscr{I}^{-c+l}\mathscr{F})$.  Passing to the limit over all $l$ we obtain a continuous endomorphism $F$ of $\HH^1_c(X_n^{\ord},\omega^{\kappa^{\un}}/(\mathfrak{m}_{\Lambda})^n)$.
 
 We now need to prove that $F$ is locally finite. We first deal with $n=1$. In that case,
 \[\HH^1_c({X}_1^{\ord}, \omega^{\kappa^{\un}}/\mathfrak{m}_\Lambda) = \bigoplus_{k = -p+2}^{0} \HH^1_c({X}_1^{\ord}, \omega^{k}) \]
 (in this last formula, we can let $k$ go through any set of representatives of $\ZZ/(p-1)\ZZ$ in $\ZZ$) and this isomorphism is equivariant for the action of $F$ on the left, and $T_p$ on the right by Lemma~\ref{lem-T_p-U_p-Frob}. It follows from Corollary~\ref{coro-mod-p-control} that  $F$ is locally   finite for $n=1$, and also that $e(F) \HH^1_c({X}_1^{\ord}, \omega^{\kappa^{\un}}/\mathfrak{m}_\Lambda) $ is a finite $\F_p$-vector space. 
 We deal with the general case by induction, using the short exact sequences in cohomology ($\mathfrak{X}^{\ord}$ is affine) and Proposition~\ref{prop-pro-exact}: 
\[ \begin{tikzcd}[column sep=small]
0 \ar[r]& \HH^1_c({X}_{n+1}^{\ord}, \omega^{\kappa^{\un}} \otimes (\mathfrak{m}_\Lambda^n/ \mathfrak{m}_\Lambda^{n+1})) \ar[r]& \HH^1_c({X}_{n+1}^{\ord}, \omega^{\kappa^{\un}}/\mathfrak{m}_\Lambda^{n+1}) \ar[r] & \HH^1_c({X}_1^{\ord}, \omega^{\kappa^{\un}}/\mathfrak{m}_\Lambda^n) \ar[r] & 0.
\end{tikzcd} \]
 
 It follows that $F$ is locally finite and $e(F) \HH^1_c( \mathfrak{X}^{\ord}, \omega^{\kappa^{\un}})$ is a finite projective $\Lambda$-module, as it is a complete flat $\Lambda$-module whose reduction mod $\mathfrak{m}$ is finite.
 
 Finally, for all $k \in\ZZ$, we have an isomorphism $e(F) \HH^1_c( \mathfrak{X}^{\ord}, \omega^{\kappa^{\un}}) \otimes_{\Lambda, k} \ZZ_p = e(F) \HH^1_c( \mathfrak{X}^{\ord}, \omega^{k})$ and we can consider the composition
\[ \begin{tikzcd}
e(F) \HH^1_c( \mathfrak{X}^{\ord}, \omega^{k})\ar[r] & \HH^1_c( \mathfrak{X}^{\ord}, \omega^{k}) \ar[r] & \HH^1( X, \omega^{k})\ar[r] & e(T_p) \HH^1( X, \omega^{k})
\end{tikzcd}\]
where the maps are inclusion, corestriction, and projection.  When $k\leq -1$ this is a map of finite free $\ZZ_p$-modules, which is an isomorphism modulo $p$ by Corollary~\ref{coro-mod-p-control}. Therefore this map is an isomorphism.

 The proof of the second point of the theorem follows along similar lines. 
 \end{proof} 

\subsection{Serre duality}   

Recall that we have a residue map  $\mathrm{res} : \HH^1(X, \Omega^1_{X/\ZZ_p}) \rightarrow \ZZ_p$.  Therefore, there is a natural map: $\HH^1_c(\mathfrak{X}^{\ord}, \omega^2(-D) \hat{\otimes} \Lambda) \rightarrow \Lambda$ which is obtained as the composite 
\begin{center}
\begin{tikzcd}[row sep=normal, column sep=normal]
\HH^1_c\left(\mathfrak{X}^{\ord}, \omega^2(-D) \hat{\otimes} \Lambda\right)\arrow[r]
& \HH^1\left(X, \omega^2(-D) \hat{\otimes} \Lambda\right) \ar[r] \arrow[d, phantom, ""{coordinate, name=Z}] & \HH^1\left(X, \omega^2(-D) \right)\otimes \Lambda \arrow[dl,
rounded corners,"\mathrm{KS}\otimes 1",
to path={ -- ([xshift=2ex]\tikztostart.east)
|- (Z) [near start]\tikztonodes
-| ([xshift=-2ex]\tikztotarget.west)
-- (\tikztotarget)}] \\
& \HH^1\left(X, \Omega^1_{X/\ZZ_p}\right) \otimes \Lambda \ar[r,"\mathrm{res}\otimes 1"]  &\Lambda
\end{tikzcd}
\end{center}
where the first map is the corestriction (see Section~\ref{sec-compact-def}).

Let us denote by  $\omega^{2-\kappa^{\un}}(-D) = \omega^2(-D) \otimes \underline{\mathrm{Hom}}(\omega^{\kappa^{\un}}, \Lambda \hat{\otimes} \oscr_{\mathfrak{X}^{\ord}})$. This is an invertible sheaf of  $\Lambda \hat{\otimes} \oscr_{\mathfrak{X}^{\ord}}$-modules over  $\mathfrak{X}^{\ord}$. 

\begin{rem} The following character  $\ZZ_p^\times  \rightarrow \Lambda^\times$, $t \mapsto t^2 (\kappa^{\un}(t))^{-1}$ induces an automorphism  $ d : \Lambda \rightarrow \Lambda$.  We have an isomorphism of $\oscr_{\mathfrak{X}^{\ord}} \hat{\otimes} \Lambda$-modules:  $\omega^{2-\kappa^{\un}}(-D) = \omega^{\kappa^{\un}}(-D) \otimes_{\Lambda, d} \Lambda$.
\end{rem}

We can therefore define a pairing:
$$ \langle -,- \rangle :  \HH^0\left(\mathfrak{X}^{\ord}, \omega^{\kappa^{\un}}\right) \times \HH^1_c\left(\mathfrak{X}^{\ord}, \omega^{2-\kappa^{\un}}(-D)\right) \longrightarrow \HH^1_c\left( \mathfrak{X}^{\ord}, \omega^{2}(-D) \otimes_{\ZZ_p} \Lambda\right) \longrightarrow \Lambda. $$

\begin{prop}\label{propFU_ptranspose} For any $(f, g) \in \HH^0(\mathfrak{X}^{\ord}, \omega^{\kappa^{\un}}) \times \HH^1_c(\mathfrak{X}^{\ord}, \omega^{2-\kappa^{\un}}(-D))$, we have $\langle \langle p\rangle^{-1}U_p f, g \rangle = \langle f, F g \rangle$.
\end{prop}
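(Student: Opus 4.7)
The plan is to deduce this adjointness statement from the cohomological correspondence level duality $D(F) = U_p$ of Lemma \ref{U_p-F-dual}, by tracing through how the pairing is built from Grothendieck--Serre duality on $X$.

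First, I would unwind the definition of the pairing. By construction, it factors through the canonical map $\HH^1_c(\mathfrak{X}^{ord}, \omega^{2-\kappa^{un}}(-D)) \to \HH^1(X, \omega^{2-\kappa^{un}}(-D))$, followed by cup product with $\HH^0(\mathfrak{X}^{ord},\omega^{\kappa^{un}})$ landing in $\HH^1(X, \omega^2(-D)) \otimes \Lambda$, followed by Kodaira--Spencer and the residue. This is exactly the $\Lambda$-linear Serre duality pairing between $\HH^0(\mathfrak{X}^{ord},\omega^{\kappa^{un}})$ and $\HH^1(X, \omega^{2-\kappa^{un}}(-D))$, restricted through the map from compactly supported cohomology of the ordinary locus to coherent cohomology of $X$. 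The operators $U_p$ on $\HH^0(\mathfrak{X}^{ord},\omega^{\kappa^{un}})$ and $F$ on $\HH^1_c(\mathfrak{X}^{ord},\omega^{2-\kappa^{un}}(-D))$ are induced (as in the construction of the Hecke action on $\HH^1_c$ in the previous theorem's proof) by extending the cohomological correspondences $U_p$ and $F$ to correspondences on $X_0(p)$ supported on $\mathfrak{X}_0(p)^{ord,V}$ and $\mathfrak{X}_0(p)^{ord,F}$ respectively, and then applying the $(p_2^\star, p_1^!)$ formalism.

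Next, I would invoke the general principle, already implicit in the discussion preceding Proposition \ref{prop-T_p-selfdual}, that if $(C, T)$ is a finite flat cohomological correspondence $T : p_2^\star \mathscr{F} \to p_1^! \mathscr{F}$ on a proper scheme $X$ over $\ZZ_p$, then the induced endomorphism of $\mathrm{R}\Gamma(X, \mathscr{F})$ is adjoint, under the Serre duality pairing, to the endomorphism of $\mathrm{R}\Gamma(X, D\mathscr{F})$ induced by $D(T) : p_2^\star D\mathscr{F} \to p_1^! D\mathscr{F}$ (using the isomorphism of $X_0(p)$ with its transpose). This is a formal consequence of the functoriality of Grothendieck duality under proper pushforward: the composition $(p_1)_\star T \circ p_2^\star$ on $\mathrm{R}\Gamma$ dualizes, via $D \circ (p_i)_\star = (p_i)_\star \circ D$ and $D \circ p_i^! = p_i^\star \circ D$, to $(p_2)_\star D(T) \circ p_1^\star$.

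Applying this principle to the correspondence $F$ on $\mathfrak{X}_0(p)^{ord,F}$ and using Lemma \ref{U_p-F-dual} identifies $D(F)$ with $U_p$ as a cohomological correspondence on $\mathfrak{X}_0(p)^{ord,V}$, which upgrades to the identity $\langle U_p f, g\rangle = \langle f, F g\rangle$ on cohomology. The only real obstacle is checking that the pairing in the statement genuinely is the Serre duality pairing on $X$ restricted through the natural map $\HH^1_c(\mathfrak{X}^{ord},-) \to \HH^1(X,-)$; given the factorization exhibited above and the fact that both $F$ acting on $\HH^1_c$ and the map to $\HH^1(X,-)$ are defined via the same extension of the correspondence to $X$, this compatibility is essentially by construction, so the proposition reduces cleanly to Lemma \ref{U_p-F-dual}.
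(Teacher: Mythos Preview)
Your overall strategy --- reduce to Lemma \ref{U_p-F-dual} via the formal adjointness of a cohomological correspondence and its Serre dual --- is the right one and is what the paper ultimately uses. But there is a genuine gap in your execution: you repeatedly write expressions like ``$\HH^1(X, \omega^{2-\kappa^{un}}(-D))$'' and speak of extending the correspondences $U_p$ and $F$ to correspondences on $X_0(p)$ acting on $\omega^{\kappa^{un}}$. The interpolation sheaf $\omega^{\kappa^{un}}$ is only defined on the ordinary locus $\mathfrak{X}^{ord}$; it does not extend to the proper curve $X$. Consequently your ``general principle'' for cohomological correspondences on a proper scheme cannot be applied as stated, and the phrase ``essentially by construction'' is hiding the real difficulty.

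The paper bypasses this by first reducing to classical weights: since $\Lambda \hookrightarrow \prod_{k\in\ZZ}\ZZ_p$ is injective, it suffices to check $\langle U_p f, g\rangle_k = \langle f, Fg\rangle_k$ for the specialized pairings at every integer $k$. After specialization the sheaves become the classical $\omega^k$ and $\omega^{2-k}(-D)$, which do live on all of $X$ (or rather $X_n$ modulo $p^n$). One then realizes $\HH^1_c(X_n^{ord},\omega^{2-k}(-D))$ as a limit over twists $\mathscr{I}^m\omega^{2-k}(-D)$ on the proper $X_n$, writes $F$ as a correspondence between such twisted sheaves, and only then applies Grothendieck duality together with Lemma \ref{U_p-F-dual}. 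The reduction to classical weights is not a cosmetic simplification; it is what makes the proper-scheme duality machine available at all.
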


\begin{proof} We have a commutative diagram: 
\begin{center}
\begin{tikzcd}
\HH^0\left(\mathfrak{X}^{\ord}, \omega^{\kappa^{\un}}\right) \times \HH^1_c\left(\mathfrak{X}^{\ord}, \omega^{2-\kappa^{\un}}(-D)\right) \ar[r] \ar[d] & \Lambda \ar[d] \\
\prod_{k \in \ZZ} \HH^0\left(\mathfrak{X}^{\ord}, \omega^{k}\right) \times \HH^1_c\left(\mathfrak{X}^{\ord}, \omega^{2-k}(-D)\right) \ar[r]  & \prod_{k\in \ZZ} \ZZ_p
\end{tikzcd}
\end{center} 
where the vertical maps are injective. (The injectivity of the first vertical map follows from the fact that for any complete flat $\Lambda$-module $M$, $M\to\prod_k M\otimes_{\Lambda,k}\ZZ_p$ is injective.  Indeed, if $\mathfrak{p}_k=\ker(k:\Lambda\to \ZZ_p)$, then tensoring the injective map $\Lambda/(\mathfrak{p}_0\cdots\mathfrak{p}_k)\to\prod_{i=0}^k \Lambda/\mathfrak{p}_i$ with $M$, we see that the kernel is contained in $\bigcap_k\mathfrak{p}_0\cdots\mathfrak{p}_kM\subseteq \bigcap_n(\mathfrak{m}_\Lambda)^nM=0$ by completeness.)  It suffices therefore to prove the identity for the pairing 
\[\langle -,- \rangle_k :  \HH^0(\mathfrak{X}^{\ord}, \omega^{k}) \times \HH^1_c(\mathfrak{X}^{\ord}, \omega^{2-k}(-D)) \longrightarrow \HH^1_c( \mathfrak{X}^{\ord}, \omega^{2}(-D) ) \longrightarrow \ZZ_p \]

We work modulo $p^n$.  As in the proof of Theorem~\ref{thm-higher-hida} we let $\mathscr{I}$ be a locally principal sheaf of ideals defining a closed subscheme whose complement is $X_n^{\ord}$, and we have a cohomological correspondence over $X_0(p)_n$
$$ F  : p_2^\star  \mathscr{I}^{ml}\omega^{2-k}(-D)  \longrightarrow p_1^!  \mathscr{I}^{-c+ l}\omega^{2-k}(-D) $$
so that passing to cohomology and taking the limit over $l$ gives the action of $F$ on $\HH^1_c({X}_n^{\ord}, \omega^{2-k}(-D))$.

We can consider the dual $\DD(F):p_1^\star  \mathscr{I}^{-l+c}\omega^{k}  \rightarrow p_2^!  \mathscr{I}^{-ml}\omega^{k}$.  Passing to cohomology and taking the colimit over $l$ gives the action of $\langle p\rangle^{-1}U_p$ on $\HH^0(X_n^{\ord},\omega^k)$ by Lemma~\ref{U_p-F-dual}.
\end{proof} 

This pairing hence restricts to a pairing: 
\[\langle -,- \rangle :  e(U_p)\HH^0(\mathfrak{X}^{\ord}, \omega^{\kappa^{\un}}) \times e(F)\HH^1_c(\mathfrak{X}^{\ord}, \omega^{2-\kappa^{\un}}(-D)) \longrightarrow  \Lambda.\]

\begin{thm}
\leavevmode
\begin{enumerate} 
\item The pairing $\langle -,- \rangle$ is a perfect pairing, 
\item  For any $(f, g) \in e(U_p)\HH^0(\mathfrak{X}^{\ord}, \omega^{\kappa^{\un}}) \times e(F)\HH^1_c(\mathfrak{X}^{\ord}, \omega^{2-\kappa^{\un}}(-D))$,
\[\langle \langle p\rangle^{-1}U_p f, g \rangle = \langle f, F g \rangle,\]
\item The pairing  $\langle -,- \rangle$ is compatible with the classical pairing in the sense that for any $k \in \mathbb{Z}$, we have a commutative diagram, where the bottom pairing is the one deduced from Serre duality on $X$: 
\begin{center}
\begin{tikzcd}
 e(U_p)\HH^0\left(\mathfrak{X}^{\ord}, \omega^{k}\right)  \times  e(F) \HH^1_c\left(\mathfrak{X}^{\ord}, \omega^{2-k}(-D)\right) \ar[r] \ar[d,"j",xshift=40pt] & \ZZ_p.  \\
e(T_p)\HH^0\left(X, \omega^{k}\right) \ar[u,"i",xshift=-40pt] \times e(T_p)\HH^1\left(X, \omega^{2-k}(-D)\right) \ar[ur,start anchor=north east,end anchor=south west]  &
\end{tikzcd}
\end{center}

\end{enumerate}
\end{thm}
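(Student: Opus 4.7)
The plan is to dispatch (2) immediately, then to prove the compatibility (3) with classical Serre duality, and finally to derive perfectness (1) from (3) by a density argument over the Iwasawa algebra.

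Part (2) is essentially already proved. Proposition \ref{propFU_ptranspose} gives the adjunction $\langle U_p f, g\rangle=\langle f, Fg\rangle$ on the full cohomology groups $\HH^0(\mathfrak{X}^{ord},\omega^{\kappa^{un}})\times\HH^1_c(\mathfrak{X}^{ord},\omega^{2-\kappa^{un}}(-D))$. Since $e(U_p)$ and $e(F)$ are $\Lambda$-linear idempotents and the pairing is $\Lambda$-linear, this identity restricts to the pairing on the ordinary parts. No extra work is needed.

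For (3), I would unravel the construction of the $\Lambda$-adic pairing. By definition, it factors as the cup product
$$\HH^0(\mathfrak{X}^{ord},\omega^{\kappa^{un}})\times\HH^1_c(\mathfrak{X}^{ord},\omega^{2-\kappa^{un}}(-D))\longrightarrow\HH^1_c(\mathfrak{X}^{ord},\omega^2(-D)\otimes\Lambda)$$
followed by the natural extension map to $\HH^1(X,\omega^2(-D))\otimes\Lambda$ and then $KS\otimes 1$ and $\mathrm{res}\otimes 1$. The classical Serre pairing on $X$ is cup product into $\HH^1(X,\omega^2(-D))$ followed by the same $KS$ and $\mathrm{res}$. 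Hence (3) reduces to the commutativity of the square whose horizontal arrows are cup products and whose right vertical arrow is the natural map $\HH^1_c(\mathfrak{X}^{ord},\omega^2(-D))\to\HH^1(X,\omega^2(-D))$; this is a standard projection-formula statement for cohomology with supports, together with the identification of $e(T_p)\HH^0(X,\omega^k)$ with $e(U_p)\HH^0(\mathfrak{X}^{ord},\omega^k)$ and of $e(T_p)\HH^1(X,\omega^{2-k}(-D))$ with $e(F)\HH^1_c(\mathfrak{X}^{ord},\omega^{2-k}(-D))$ from the higher Hida control theorem. Self-duality of $T_p$ (Proposition \ref{prop-T_p-selfdual}) is what ensures that the same ordinary projector is used on both sides under the pairing.

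Granting (3), I deduce (1) by density of classical weights. Set $M=e(U_p)\HH^0(\mathfrak{X}^{ord},\omega^{\kappa^{un}})$ and $N=e(F)\HH^1_c(\mathfrak{X}^{ord},\omega^{2-\kappa^{un}}(-D))$; by the higher Hida theorem both are finite projective $\Lambda$-modules, and the pairing induces a $\Lambda$-linear map $\phi:M\to\mathrm{Hom}_\Lambda(N,\Lambda)$. The specialization $\phi\otimes_{\Lambda,k}\ZZ_p$ for $k\geq 3$ is, by (3) and the control theorem, identified with the ordinary projection of classical Serre duality between $\HH^0(X,\omega^k)$ and $\HH^1(X,\omega^{2-k}(-D))$, which is perfect. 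In particular the two $\Lambda$-modules have the same generic rank and $\det\phi$ specializes to a $p$-adic unit at every integer $k\geq 3$. Writing $\Lambda$ as a finite product of factors $\Lambda_\chi\cong\ZZ_p[[T]]$ and observing that the image of the classical weights in each factor is a set of points $u_k\in p\ZZ_p$ that is Zariski dense (in particular, no element of $(p,T)\subset\ZZ_p[[T]]$ can avoid the maximal ideal after all such specializations), one concludes $\det\phi\in\Lambda^\times$, hence $\phi$ is an isomorphism. The same argument applied with the roles of $M$ and $N$ swapped finishes the proof of perfectness.

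The main obstacle is the compatibility (3): making rigorous the interaction between cup product with compact supports on $\mathfrak{X}^{ord}$, the natural map to $\HH^1(X,-)$, and the Kodaira--Spencer/residue trace requires careful bookkeeping with cohomology supported at the supersingular locus, but once this is set up the rest of the argument is formal.
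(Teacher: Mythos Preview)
Your approach matches the paper's: (2) is immediate from Proposition \ref{propFU_ptranspose}; (3) comes from the commutativity of the unprojected diagram (your projection-formula remark is exactly the paper's ``commutative by construction'') together with the adjunction properties of the projectors; and (1) is deduced from (3) by specializing at classical weights $k\geq 3$, where the control theorem makes $i$ and $j$ isomorphisms and classical Serre duality is perfect, then using Zariski density in $\Spec\Lambda$ --- the paper states this tersely, you spell out the $\det\phi\in\Lambda^\times$ argument.

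One small imprecision worth flagging: in your argument for (3) you invoke the control-theorem identifications $e(T_p)\HH^0(X,\omega^k)\simeq e(U_p)\HH^0(\mathfrak{X}^{ord},\omega^k)$ and $e(T_p)\HH^1\simeq e(F)\HH^1_c$, but (3) is asserted for \emph{all} $k\in\ZZ$, and those identifications only hold for $k\geq 3$. The paper avoids this by a direct projector computation: for $f\in e(T_p)\HH^0$ and $g\in e(F)\HH^1_c$ one has $\langle e(U_p)i(f),g\rangle=\langle i(f),e(F)g\rangle=\langle i(f),g\rangle$ (adjunction of $U_p$ and $F$) and $\langle f,e(T_p)j(g)\rangle=\langle e(T_p)f,j(g)\rangle=\langle f,j(g)\rangle$ (self-adjunction of $T_p$, Proposition \ref{prop-T_p-selfdual}), which reduces (3) to the unprojected diagram. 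This is a cosmetic fix; your overall strategy is correct.
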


\begin{proof}  The second point follows from Proposition~\ref{propFU_ptranspose}. The first point will follow from the third point, since the map $i$ and $j$ are isomorphisms for integers $k \geq 3$ and the bottom pairing is perfect. 
Let us prove the last point. First, we consider the diagram  without applying projectors: 
\begin{center}
\begin{tikzcd}
\HH^0\left(\mathfrak{X}^{\ord}, \omega^{k}\right)  \times  \HH^1_c\left(\mathfrak{X}^{\ord}, \omega^{2-k}(-D)\right) \ar[r] \ar[d,"j",xshift=40pt] & \ZZ_p.  \\
\HH^0\left(X, \omega^{k}\right) \ar[u,"i",xshift=-40pt] \times \HH^1\left(X, \omega^{2-k}(-D)\right) \ar[ur,start anchor=north east,end anchor=south west]  &
\end{tikzcd}
\end{center}
which is  commutative by construction. For any $f \in \HH^0(X, \omega^{k})$ and $g \in  \HH^1_c(\mathfrak{X}^{\ord}, \omega^{2-k}(-D))$, we have $\langle i(f), g \rangle = \langle f, j(g) \rangle$. If we now assume that $f \in  e(T_p)\HH^0(X, \omega^{k})$ and $g \in e(F)  \HH^1_c(\mathfrak{X}^{\ord}, \omega^{2-k}(-D))$, we have that 
$$\langle e(U_p) i(f), g \rangle = \langle i(f), e(F) g \rangle = \langle i(f), g \rangle$$ and 
$$\langle f,  e(T_p)j(g) \rangle = \langle e(T_p) f, j( g) \rangle = \langle f, j(g) \rangle$$ and the conclusion follows. 
\end{proof}

\section{Higher Coleman theory}

\subsection{Cohomology with support in a closed subspace} We recall the notion of  cohomology of an abelian sheaf on a topological space,  with support in a closed subspace. A reference for this material is \cite[Expos\'e~I]{MR2171939}.
Let $X$ be a topological space. 
Let $i : Z \hookrightarrow X$ be a closed subspace. We denote by $C_{Z}$ and $C_{X}$ the categories of sheaves of abelian groups  on $Z$ and $X$ respectively.

We have the  pushforward functor $i_\star : C_{Z} \rightarrow C_{X}$. The functor $i_\star$  has a right adjoint $i^! : C_{X} \rightarrow C_{Z}$.  For an abelian sheaf $\mathscr{F}$ over $X$, we let $\Gamma_Z(X, \mathscr{F}) = \HH^0(X, i_\star i^! \mathscr{F})$. By definition this is the subgroup of $\HH^0(X, \mathscr{F})$ of sections whose support is included in $Z$. We let $\mathrm{R}\Gamma_Z(X, -)$ be the derived functor of $\Gamma_Z(X,-)$. 
  
Let $U= X \setminus Z$ and let $\mathscr{F}$ be an object of $C_X$. We have an exact triangle \cite[I, Corollary~2.9]{MR2171939}:
\[ \mathrm{R}\Gamma_Z(X, \mathscr{F}) \longrightarrow \mathrm{R}\Gamma(X, \mathscr{F}) \longrightarrow \mathrm{R}\Gamma(U, \mathscr{F}) \xrightarrow{+1}\]
  
Some  properties of the cohomology with support are: 
\begin{enumerate}
\item (Change of support) If $Z \subset Z'$, there is a map $\mathrm{R}\Gamma_Z(X, \mathscr{F}) \rightarrow \mathrm{R}\Gamma_{Z'}(X, \mathscr{F})$ (\emph{cf.} \cite[Expos\'e~I, Proposition~1.8]{MR2171939} ).

\item (Pull-back) If we have a cartesian diagram:
\[
\xymatrix{ Z \ar[r] \ar[d]& X \ar[d]^f \\
Z' \ar[r] & X'}
\]
and a sheaf $\mathscr{F}$ on $X'$, there is a map $\mathrm{R}\Gamma_{Z'}(X', \mathscr{F}) \rightarrow  \mathrm{R}\Gamma_{Z}(X, f^\star \mathscr{F})$,

\item (Change of ambient space) If we have $Z \subset U \subset X$ for some open $U$ of $X$, then the pull back map   $\mathrm{R}\Gamma_Z(X, \mathscr{F}) \rightarrow \mathrm{R}\Gamma_{Z}(U, \mathscr{F}) $ is a quasi-isomorphism (\emph{cf.} \cite[I, Proposition~2.2]{MR2171939}).
\end{enumerate}

We now discuss the construction of the trace map in the context of adic spaces and finite flat morphisms.

\begin{lem}\label{lem-trace-1}  Consider a commutative  diagram  of  topological spaces:

\begin{eqnarray*}
\xymatrix{ Z \ar[r] \ar[d]& X \ar[d]^f \\
Z' \ar[r] & X'}
\end{eqnarray*}
with $X$ and $X'$ adic spaces, $f$ a finite flat morphism of adic spaces, $Z'$ and $Z$ are closed subspaces of $X'$ and $X$ respectively. 
Let  $\mathscr{F}$ be a sheaf of $\oscr_{X'}$-modules. Then there is a trace map $\mathrm{R}\Gamma_{Z}(X, f^\star \mathscr{F}) \rightarrow  \mathrm{R}\Gamma_{Z'}(X', \mathscr{F})$.

\end{lem}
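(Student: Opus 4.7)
The plan is to piece together the classical trace map of the finite flat morphism $f$ with the fact that sections-with-support commute with finite pushforward. The starting point is the trace $\mathrm{tr}_f : f_\star \oscr_X \to \oscr_{X'}$, which exists since $f$ is finite flat; tensoring with $\mathscr{F}$ produces a morphism of $\oscr_{X'}$-modules
$$\mathrm{tr}_f : f_\star f^\star \mathscr{F} \;=\; \mathscr{F}\otimes_{\oscr_{X'}} f_\star \oscr_X \;\longrightarrow\; \mathscr{F}.$$

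First, commutativity of the given square forces $f(Z) \subseteq Z'$, equivalently $Z \subseteq f^{-1}(Z')$. The change-of-support property (1) then yields a map $\mathrm{R}\Gamma_Z(X, f^\star \mathscr{F}) \to \mathrm{R}\Gamma_{f^{-1}(Z')}(X, f^\star \mathscr{F})$, reducing the problem to constructing a trace in the special case $Z = f^{-1}(Z')$.

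Second, I would establish a canonical isomorphism $\mathrm{R}\Gamma_{f^{-1}(Z')}(X, \mathscr{G}) \simeq \mathrm{R}\Gamma_{Z'}(X', f_\star \mathscr{G})$ for any $\oscr_X$-module $\mathscr{G}$. At the level of sections this amounts to the observation that for an open $V \subset X'$, a section of $\mathscr{G}$ over $f^{-1}(V)$ vanishes on $f^{-1}(V) \setminus f^{-1}(Z')$ if and only if the corresponding section of $f_\star \mathscr{G}$ over $V$ vanishes on $V \setminus Z'$; in sheaf-theoretic terms this identifies $f_\star\, i_\star i^! \mathscr{G}$ with $(i')_\star (i')^! f_\star \mathscr{G}$, where $i$, $i'$ denote the inclusions of $f^{-1}(Z')$ and $Z'$. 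The passage to derived functors uses that $f$ is finite, hence $f_\star = \mathrm{R}f_\star$ is exact on $\oscr_X$-modules.

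Finally, taking $\mathscr{G} = f^\star \mathscr{F}$ and applying $\mathrm{tr}_f$ yields $\mathrm{R}\Gamma_{Z'}(X', f_\star f^\star \mathscr{F}) \to \mathrm{R}\Gamma_{Z'}(X', \mathscr{F})$, and concatenating the three maps produces the desired trace. The only mildly delicate point is the commutation of sections-with-support with finite pushforward in the adic setting; this is routine provided one knows that a finite morphism of adic spaces is closed (so that supports behave as expected) and that $f_\star$ is exact, both of which are standard.
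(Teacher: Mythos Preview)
Your argument is correct and follows essentially the same route as the paper: reduce via change of support to the case $Z = f^{-1}(Z')$, identify sections with support on $X$ along $f^{-1}(Z')$ with sections with support on $X'$ along $Z'$ of the pushforward, and then apply the trace $f_\star f^\star\mathscr{F}\to\mathscr{F}$. The only cosmetic difference is that the paper phrases the passage to derived functors via ``enough injectives plus a functorial underived map'' whereas you invoke exactness of $f_\star$ directly; these amount to the same thing.
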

\begin{proof} We first recall that the category of sheaves of $\oscr_T$-modules on a ringed space $(T, \oscr_T)$ has enough injectives (\cite[Tag 01DH]{stacks-project}). It follows that it is enough to construct a functorial map $\Gamma_{Z}(X, f^\star \mathscr{F}) \rightarrow  \Gamma_{Z'}(X', \mathscr{F}) $  for  sheaves  $\mathscr{F}$ of $\oscr_{X'}$-modules. 
We have a map $\Gamma_{Z}(X, f^\star \mathscr{F}) \rightarrow  \Gamma_{f^{-1}(Z')}(X, f^\star \mathscr{F})$. Therefore, it suffices to consider the case where $Z = f^{-1}(Z')$. 
We have a trace map $\Tr : f_\star f^\star \mathscr{F} \rightarrow \mathscr{F}$.  Let us complete the above diagram into:
\begin{eqnarray*}
\xymatrix{ Z \ar[r] \ar[d]& X \ar[d]^f & U \ar[d]^g \ar[l]\\
Z' \ar[r] & X' & U' \ar[l]_{j'}}
\end{eqnarray*}
where $U' = X' \setminus Z'$ and $U = X \setminus Z$.  We have a commutative diagram:
\begin{eqnarray*}
\xymatrix{ f_\star f^\star \mathscr{F} \ar[r] \ar[d]^{\Tr} & j'_\star g_\star g^\star (j')^\star \mathscr{F} \ar[d] \\
\mathscr{F} \ar[r] & j'_\star  (j')^\star \mathscr{F} }
\end{eqnarray*}
Taking global sections and the induced map on the kernel of the two horizontal morphisms, we deduce that there is a map $\Gamma_Z(X, f^\star \mathscr{F}) \rightarrow \Gamma_{Z'}(X', \mathscr{F})$. 
\end{proof}

\subsection{The modular curve $X_0(p)$} We let $X_0(p)$ be the compactified modular curve of level $\Gamma_0(p)$ and tame level $\Gamma_1(N)$ for some prime to $p$ integer $N\geq 3$, viewed as an adic space over $\Spa( \qq_p, \ZZ_p)$. We let $H_1 \subset E[p]$ be the universal subgroup of order $p$. 

\subsubsection{Parametrization by the degree} Let $X_0(p)^{\mathrm{rk }1}$ be the subset of rank one points of $X_0(p)$.  Fargues defines a map $\deg : X_0(p)^{\mathrm{rk }1} \rightarrow [0,1]$, which sends $x  \in X_0(p)^{\mathrm{rk }1}$ to $\deg H_{1} \in  [0,1]$. We briefly recall the definition (see \cite[\S4, Definition~3]{MR2673421} for more details). The group $H_1$ extends to a finite flat group scheme $\tilde{H}_1$ over $\Spec~k(x)^+$, by taking the schematic closure in $E[p]$. There is an isomorphism $\omega_{\tilde{H}_1} = k(x)^+/fk(x)^+$ and $\deg H_{1} = v_x(f)$ for $v_x$ the valuation attached to $x$, normalized by $v(p)=1$. For any rational interval $[a,b] \subset [0,1]$, there is a unique quasi-compact open $X_0(p)_{[a,b]}$ of $X$ such that $\deg^{-1}[a,b] = X_0(p)^{\mathrm{rk} 1}_{[a,b]}$.  We let $X_0(p)_{[0,a[ \cup ]b, 1]} = X_0(p) \setminus X_0(p)_{[a,b]}$. 

\begin{rem} We remark that in this parametrization, the two extremal points $0$ (resp. $1$) correspond to ordinary semi-abelian schemes equipped with an \'etale (resp. multiplicative) subgroup $H_1$.
\end{rem}

 \subsubsection{The canonical subgroup}\label{sec-can-sub} We let $X$ be the compactified modular curve of level prime to $p$. 
 We have an Hasse invariant $\mathrm{Ha}$ and we can define the Hodge height:
 $$\mathrm{Hdg} :  X^{\mathrm{rk 1}} \rightarrow [0,1]$$ obtained by sending $x$ to $\inf \{ v_x(\widetilde{\mathrm{Ha}}), 1 \}$ for any local lift $\widetilde{\mathrm{Ha}}$ of the Hasse invariant.  For any $v \in [0,1]$, we let $X_v = \{ x \in X \mid\mathrm{Hdg}(x) \leq v\}$ (more correctly, $X_v$ is the quasi-compact open whose rank one points are those described as above). 
 
 We recall the following theorems:
 
 \begin{thm}[{\cite[Theorem~3.1]{MR0447119}}]
 If $v < \frac{p}{p+1}$,  then over $X_v$ we have a canonical subgroup $H_1^{\can} \subset E[p]$ which is locally isomorphic to $\ZZ/p\ZZ$ in the \'etale topology.
 \end{thm}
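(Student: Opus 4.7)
The approach is to work locally on a formal model of $X_v$ and analyze the Newton polygon of the multiplication-by-$p$ power series on the formal group $\hat E$. Over a small formal affine open on which $\hat E$ is coordinatized, write $\hat E \simeq \Spf R[[T]]$ and let $\tilde H \in R$ be a local lift of the Hasse invariant, with $v(\tilde H) \leq v$. A classical computation (due to Serre--Tate and Katz) shows that
$$[p](T) \;=\; pT + \cdots + u_1 \tilde H \cdot T^p + \cdots + u_2 \cdot T^{p^2} + \cdots$$
with $u_1, u_2$ units. The Newton polygon of $[p](T)/T$ therefore has vertices $(0,1)$, $(p-1, v(\tilde H))$, $(p^2-1, 0)$, so $[p]$ has $p - 1$ non-trivial roots of valuation $(1 - v(\tilde H))/(p-1)$ and $p^2 - p$ roots of valuation $v(\tilde H)/(p(p-1))$. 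The hypothesis $v < p/(p+1)$ is exactly the condition $(1-v)/(p-1) > v/(p(p-1))$ needed to separate the two slopes, producing a genuine break of the Newton polygon at $x = p-1$.

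Once the Newton polygon has a genuine vertex at $x = p - 1$, the plan is to apply Weierstrass preparation to factor $[p](T) = P(T) \cdot U(T)$ with $P$ a distinguished polynomial of degree $p$ whose roots are the $p$ near-identity $p$-torsion points (including $T = 0$), and $U$ a unit in the appropriate Tate algebra. I would then define $H_1^{can}$ locally as $V(P) \subset \hat E[p]$. This is visibly a finite flat closed subscheme of order $p$; it is a subgroup of $E[p]$ because the valuation constraint ``$v(T) > \gamma$'' (for any $\gamma$ strictly between the two slopes) is preserved by the formal group law $T +_{\hat E} T'$, so $V(P)$ is closed under addition. The uniqueness of Weierstrass preparation makes the local construction canonical and allows it to glue to a global finite flat subgroup scheme $H_1^{can} \subset E[p]$ over $X_v$. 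Finally, since $X_v$ is an adic space over $\Spa(\qq_p,\ZZ_p)$, any finite flat commutative group scheme of order $p$ is \'etale in characteristic zero, hence \'etale-locally isomorphic to $\ZZ/p\ZZ$.

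The technical heart of the argument is the identification of the coefficient of $T^p$ in $[p](T)$ with a unit multiple of (a lift of) the Hasse invariant. Making this intrinsic --- independent of the local coordinate on $\hat E$ and compatible with the global geometry of $X$ --- is where the bound $v < p/(p+1)$ enters in a sharp way: it is exactly the condition for the two Newton slopes to be distinct. Any improvement of the bound would require analyzing iterated Frobenius ($[p^n]$ for $n \geq 2$), which is the content of the higher canonical subgroups and not needed here.
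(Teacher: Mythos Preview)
The paper does not supply a proof of this theorem; it is quoted verbatim from Katz's \emph{$p$-adic properties of modular schemes and modular forms} (the cited reference \cite[Thm.~3.1]{MR0447119}), so there is no proof in the paper to compare against.

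Your sketch is essentially the classical Lubin--Katz Newton polygon argument, and the overall strategy is sound. Two points deserve more care if you write it out in full. First, Newton polygons and Weierstrass preparation are transparent over a complete discretely valued field, but here you are working over a formal affine open of a model of $X_v$, i.e.\ in a family; you need the factorization $[p](T) = P(T)\cdot U(T)$ to exist over the base ring $R$, not merely fiber by fiber. The bound $v(\tilde H) \le v < p/(p+1)$ does give a uniform break in the Newton polygon, and a relative Weierstrass preparation then applies, but this step should be made explicit rather than left implicit in the phrase ``apply Weierstrass preparation''. Second, your argument that $V(P)$ is a subgroup (``the valuation constraint $v(T) > \gamma$ is preserved by the formal group law'') is again a pointwise statement; over the base $R$ you should either check directly that $P(T)$ divides $P(T +_{\hat E} T')$ in $R[[T,T']]$, or --- closer to what Katz actually does --- identify $V(P)$ with the kernel of Frobenius on $\hat E$ modulo a suitable power of $p$, which makes the group-scheme structure automatic and the gluing transparent.
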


 \begin{thm}\label{thm-can}
\leavevmode 
 \begin{enumerate}
   \item For any rank one point of $X_0(p)$, we have the identity $\sum_{H \subset E[p]} \deg H = 1$. Moreover, either all the degrees are equal or there exists a canonical subgroup $H_1^{\can}$ and for all $H \neq H_1^{\can}$, $\deg H = \frac{1 - \deg H_1^{\can}}{p}$. 
  \item  If $a< \frac{1}{p+1}$, $X_0(p)_{[0,a]}$ carries a canonical subgroup $H_1^{\can} \neq H_1$ and $\deg (H_1) = \frac{ \mathrm{Hdg}}{p}$. 
  \item  If $a > \frac{1}{p+1}$, $X_0(p)_{[a,1]}$ carries a canonical subgroup $H_1^{\can} = H_1$, and $\deg(H_1) = 1- \mathrm{Hdg}$. 
 \end{enumerate}
\end{thm}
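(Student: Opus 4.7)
The plan is to deduce the theorem from two previously available inputs: Katz's canonical subgroup theorem just recalled, and the detailed analysis of the Harder--Narasimhan filtration of $E[p]$ performed by Fargues in \cite{MR2673421}. The three key facts I would import from that paper are: additivity of the degree function in short exact sequences of finite flat group schemes, the sum formula $\sum_{H \subset E[p]} \deg H = 1$ over the $p+1$ order-$p$ subgroups, and the structural dichotomy together with the quantitative identity $\deg H_1^{can} = 1 - \mathrm{Hdg}$.

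Given these inputs, part (1) is essentially formal. In the semistable case every order-$p$ subgroup has the same degree, and the sum formula forces this common value to be $1/(p+1)$. In the non-semistable case there is a unique subgroup of maximal degree, which is by definition $H_1^{can}$; by symmetry (the Galois orbits in the complement of $H_1^{can}$) the remaining $p$ subgroups share a common degree, and this degree is determined by the sum formula to be $\frac{1 - \deg H_1^{can}}{p}$.

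For part (2), the hypothesis $\deg H_1 \le a < \frac{1}{p+1}$ rules out the semistable case, so a canonical subgroup $H_1^{can}$ exists with $\deg H_1^{can} > \frac{1}{p+1}$; since $\deg H_1 < \frac{1}{p+1}$, I conclude $H_1 \ne H_1^{can}$, and then $\deg H_1 = \frac{1 - \deg H_1^{can}}{p}$ by the formula of (1). Combining with the Katz--Fargues identity $\deg H_1^{can} = 1 - \mathrm{Hdg}$ (valid because the existence of $H_1^{can}$ forces $\mathrm{Hdg} < p/(p+1)$) yields $\deg H_1 = \mathrm{Hdg}/p$. Part (3) is dual: $\deg H_1 \ge a > \frac{1}{p+1}$ forces $H_1$ to be the unique destabilising subgroup, so $H_1 = H_1^{can}$ and $\deg H_1 = 1 - \mathrm{Hdg}$.

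The main obstacle, which I would not attempt to reprove, is the equidistribution of degrees among the $p$ non-canonical subgroups together with the explicit value $\deg H_1^{can} = 1 - \mathrm{Hdg}$; both require a genuine local analysis of the Hodge--Tate exact sequence for $E[p]$ as carried out in \cite{MR2673421}. Once these quantitative statements are granted, the three parts of the theorem follow as purely combinatorial consequences of additivity and the sum formula.
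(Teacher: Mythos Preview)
The paper does not give a self-contained argument here; its proof is simply a reference to \cite[A.2]{MR2783930} and \cite[Thm.~6]{MR2919687}. Your sketch, which imports the quantitative facts from Fargues's Harder--Narasimhan analysis of $E[p]$ and then deduces parts (2) and (3) combinatorially, is in the same spirit and is correct in outline.

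One point should be corrected. The parenthetical ``by symmetry (the Galois orbits in the complement of $H_1^{can}$)'' does not justify the equidistribution of degrees among the $p$ non-canonical subgroups: at a rank-one point with residue field $K$, the group $\mathrm{Gal}(\bar K/K)$ need not act transitively on those subgroups (think of an ordinary point already equipped with full $\Gamma(p)$-level structure, where Galois acts trivially), so equality of degrees cannot be read off from Galois-invariance of the degree alone. You rightly acknowledge at the end that this equidistribution, together with the identity $\deg H_1^{can}=1-\mathrm{Hdg}$, is the genuine analytic input; you should cite it directly and drop the Galois-orbit heuristic. In fact, in the references the statement $\deg H=\mathrm{Hdg}/p$ for non-canonical $H$ is established directly from the local computation, so the sum formula in (1) is more naturally a \emph{consequence} of the finer dichotomy rather than an independent input used to derive it. With that reordering of the logic, your derivation of (2) and (3) from (1) and $\deg H_1^{can}=1-\mathrm{Hdg}$ is correct.
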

\begin{proof} See \cite[A.2]{MR2783930} and \cite[Theorem~6]{MR2919687}.
\end{proof}

\subsection{The correspondence $U_p$}\label{sect-corres-Up} We let $C$ be the correspondence over $X_0(p)$ underlying $U_p$. It parametrizes isogenies of degree $p$: $(E \rightarrow E', H_1 \stackrel{\simeq}\rightarrow H_1')$. We denote by $H = \mathrm{Ker} (E \rightarrow E')$. We have two projections $p_1((E,H_1, E', H'_1)) = (E, H_1)$, $p_2((E,H_1, E', H'_1)) = (E', H'_1)$. 

There is actually an isomorphism $X_0(p^{2}) \rightarrow C$ (where $X_0(p^2)$ parametrizes $(E,H_2 \subset E[p^2])$, with $H_2$ locally isomorphic to $\ZZ/p^2\ZZ$), mapping $(E, H_{2})$ to $(E/H_1, H_{2}/H_1, E, H_1) $   where  $H_1 = H_2[p]$, and the isogeny $E/H_1 \rightarrow E$ is dual to $E \rightarrow E/H_1$.  The inverse of this isomorphism sends $(E,H_1, E', H'_1)$ to $(E', p^{-1} H_1/H)$. 
 
  Let us denote $C_{[a,b]} = p_1^{-1}( X_0(p)_{[a,b]})$. By Theorem~\ref{thm-can}, 
  if $a > \frac{1}{p+1}$, then we have a canonical subgroup of order $p$ over $X_0(p)_{[a,1]}$, $H_1^{\can} = H_1$ and
  if $a < \frac{1}{p+1}$, we also have a canonical subgroup of order $p$ over $X_0(p)_{[0,a]}$ and $H_1 \neq H_1^{\can}$. 
 The map $p_1 : C_{[0,a]} \rightarrow X_0(p)_{[0,a]}$ has a section given by $H_1^{\can}$. Let $C^{\can}_{[0,a]}$ be the image of this section and let $C^{\et}_{[0,a]}$ be its complement, so that $C_{[0,a]} = C^{\can}_{[0,a]} \coprod C^{\et}_{[0,a]}$.

\begin{prop}\label{prop-dyn-1}
\leavevmode
  \begin{enumerate}
  \item If $a \geq \frac{1}{p+1}$, $p_2( C_{\{a\}})  = X_0(p)_{\{\frac{p-1}{p} + \frac{a}{p}\}}$. 
  \item If $a < \frac{1}{p+1}$, we have that $p_2( C^{\can}_{\{a\}}) = X_0(p)_{\{pa\}}$, and $p_2(C^{\et}_{\{a\}})  = X_0(p)_{\{1-a\}}$.
  \item If $a \in ]0,1[$, we have $\overline{p_2(C_{[a,1]})} \subseteq  X_{[a,1]}$.
  \end{enumerate}
\end{prop}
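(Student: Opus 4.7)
The plan is to compute $p_{2}$ explicitly at rank-one points by unpacking the isomorphism $C \cong X_{0}(p^{2})$. A point $(E, H_{2}) \in X_{0}(p^{2})$ with $H_{1} := H_{2}[p]$ satisfies $p_{1}(E, H_{2}) = (E/H_{1}, H_{2}/H_{1})$ and $p_{2}(E, H_{2}) = (E, H_{1})$, so given a rank-one point $(E_{1}, L_{1}) \in X_{0}(p)$ its $p_{1}$-fiber is parametrized by the $p$ order-$p$ subgroups $K \subset E_{1}[p]$ with $K \neq L_{1}$, via $E = E_{1}/K$ and $H_{1}$ equal to the image of $E_{1}[p]$ under the quotient $E_{1} \to E_{1}/K$ (equivalently, $H_{1}$ is the kernel of the dual isogeny $E_{1}/K \to E_{1}$). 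When $a < 1/(p+1)$ the canonical subgroup $H_{1}^{\mathrm{can}}$ of $E_{1}$ exists and is distinct from $L_{1}$, so the decomposition $C_{[0,a]} = C^{\mathrm{can}}_{[0,a]} \sqcup C^{\mathrm{et}}_{[0,a]}$ reflects the alternative $K = H_{1}^{\mathrm{can}}$ versus $K \neq H_{1}^{\mathrm{can}}$.

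The heart of the computation is the short exact sequence of finite flat group schemes $0 \to K \to E_{1}[p] \to H_{1} \to 0$; additivity of the degree (Fargues) gives $\deg H_{1} = 1 - \deg K$, so the degree at $p_{2}$ equals $1 - \deg K$. I then split on $a := \deg L_{1}$ using the structure theorem recalled before the proposition. If $a > 1/(p+1)$, then $L_{1} = H_{1}^{\mathrm{can}}$ and every $K \neq L_{1}$ has $\deg K = (1-a)/p$, giving $\deg p_{2} = (p-1)/p + a/p$. If $a = 1/(p+1)$, all subgroups of $E_{1}[p]$ have degree $1/(p+1)$, giving again $\deg p_{2} = p/(p+1) = (p-1)/p + a/p$. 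If $a < 1/(p+1)$, then $L_{1} \neq H_{1}^{\mathrm{can}}$, one has $\deg H_{1}^{\mathrm{can}} = 1 - pa$, and every other non-canonical subgroup of $E_{1}[p]$ has degree $a$; thus $K = H_{1}^{\mathrm{can}}$ yields $\deg p_{2} = pa$ (the $C^{\mathrm{can}}$-component) and $K \neq H_{1}^{\mathrm{can}}$ yields $\deg p_{2} = 1 - a$ (the $C^{\mathrm{et}}$-component). This establishes (1) and (2).

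For (3), the same case analysis shows $\deg L_{1} \geq a \Rightarrow \deg p_{2} \geq a$: the three resulting inequalities $(p-1+\deg L_{1})/p \geq a$, $p\,\deg L_{1} \geq a$, and $1 - \deg L_{1} \geq a$ all follow from $0 < a \leq \deg L_{1}$ combined with the appropriate range constraints on $\deg L_{1}$ (the last reducing to $a \leq p/(p+1)$, which is forced by $a \leq \deg L_{1} < 1/(p+1)$). To upgrade this rank-one statement to the claim about closures, I will invoke standard features of adic spaces: $X_{0}(p)_{[a,1]}$ is quasi-compact, $p_{2}$ is finite (hence proper and closed), and the condition $\deg \geq a$ cuts out a closed subset of the analytic locus, so the closure of $p_{2}(C_{[a,1]})$ remains in $X_{0}(p)_{[a,1]}$. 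The main technical input is the additivity of the degree in short exact sequences of finite flat group schemes and the identification $H_{1} \cong E_{1}[p]/K$; once these are granted, the rest is arithmetic bookkeeping driven by the structure theorem for degrees of subgroups of $E[p]$.
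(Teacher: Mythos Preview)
Your treatment of parts (1) and (2) is correct and is essentially the paper's argument, phrased through the $X_0(p^2)$ parametrization rather than directly in terms of the kernel $H$ of the universal isogeny; the identification $H_1' = E_1[p]/K$ and the additivity of the degree are exactly the ingredients the paper uses.

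For part (3), however, there is a genuine gap. Your closure argument rests on the claim that ``the condition $\deg \geq a$ cuts out a closed subset of the analytic locus''. In the adic topology this is false: $X_0(p)_{[a,1]}$ is by definition a quasi-compact \emph{open}, not a closed subset (its complement $X_0(p)_{[0,a[}$ is closed). So even after you establish $p_2(C_{[a,1]}) \subseteq X_0(p)_{[a,1]}$ on rank-one points, nothing you have written prevents the closure from picking up specializations lying outside $X_0(p)_{[a,1]}$. The fact that $p_2$ is finite and hence closed does not help either, since $C_{[a,1]}$ is open, not closed, in $C$.

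The paper repairs this by extracting a \emph{strict} improvement from the case analysis: one finds $b>a$ such that $p_2(C_{[a,1]}) \subseteq X_0(p)_{[b,1]}$ on rank-one points (indeed $(p-1+a)/p>a$ when $a<1$, $pa>a$, and $1-a'>p/(p+1)>a$ in the étale case). Since both $C_{[a,1]}$ and $p_2^{-1}(X_0(p)_{[b,1]})$ are quasi-compact opens, the rank-one containment upgrades to an honest containment of adic opens. Then one uses that $\overline{X_0(p)_{[b,1]}} \subseteq X_0(p)_{[a,1]}$: choosing any rational $b'\in(a,b)$, the opens $X_0(p)_{[b,1]}$ and $X_0(p)_{[0,b']}$ are disjoint (no common rank-one points), and $X_0(p)_{[0,b']}\cup X_0(p)_{[a,1]}=X_0(p)$, so the closed complement of $X_0(p)_{[0,b']}$ lies inside $X_0(p)_{[a,1]}$. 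Your inequalities already contain the seed of this $b>a$, so the fix is to state the strict bound and replace the incorrect ``closed subset'' claim by this two-step containment.
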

\begin{proof} We do the case by case argument to check the first two points, using Theorem~\ref{thm-can}, (1):
\begin{enumerate}
\item If $a>\frac{1}{p+1}$, we have $H_1 = H_1^{\can}$.  If $H \subset E[p]$ satisfies $H \neq H_1^{\can}$, then $\deg H = \frac{1-\deg H_1}{p}$ and $\deg E[p]/H = \frac{p-1}{p} + \frac{\deg H_1}{p}$. If $a=\frac{1}{p+1}$ there is no canonical subgroup, so $\deg H = \frac{1}{p+1}$ and the computation is the same. 

\item  On $C^{\can}_{[0,a]}$, the isogeny $E \rightarrow E/H$ is the canonical isogeny and $\deg E[p]/H = 1- \deg H$ where $\deg H = {1-p\deg H_1}$. 
On $C^{\et}_{[0,a]}$ we have $H \neq H_1^{\can}$, and $\deg H = \deg H_1$. Therefore $\deg E[p]/H = 1- \deg H_1$. 
\end{enumerate}
We deduce that if $a\in ]0,1[$, there exists $b>a$ such that ${p_2(C_{[a,1]})} \subseteq  X_{[b,1]}$, and the last point follows. 
\end{proof}

\medskip

We now give a similar analysis for the transpose of  $U_p$. It is useful to use the isomorphism $C \simeq X_0(p^2)$,  for which we have  $p_2(E, H_2) = (E,H_1)$ and $p_1(E, H_2) = (E/H_1, H_2/H_1)$.  We let $C^{[a,b]} = p_2^{-1}( X_0(p)_{[a,b]})$. 

If $\deg H_1 < \frac{p}{p+1}$, we deduce that $\deg E[p]/H_1 > \frac{1}{p+1}$, and hence $E[p]/H_1$ is the canonical subgroup. If $\deg H_1 > \frac{p}{p+1}$, we deduce that $\deg E[p]/H_1 < \frac{1}{p+1}$, and hence $E[p]/H_1$ is not the canonical subgroup, but $E/H_1$ admits a canonical subgroup. 
We denote by $C^{[a,1],\,\can}$ the component where $H_2/H_1$ is the canonical subgroup and by $C^{[a,1], \et}$ its complement. 

\begin{prop}\label{prop-dyn-2} 
\leavevmode
\begin{enumerate}
  \item If $a \leq \frac{p}{p+1}$, $p_1( C^{\{a\}})  = X_0(p)_{\{\frac{a}{p}\}}$. 
  \item If $a > \frac{p}{p+1}$, we have that $p_1( C^{\{a\},\,\can}) = X_0(p)_{\{1- p(1-a)\}}$, and $p_1(C^{\{a\},\,\et}))  = X_0(p)_{\{1-a\}}$.
  \item For any $0 < a < 1$, we have that $p_1(C^{[0,a[}) \subseteq (X_0(p)_{[0,a[})^\circ$, the interior of $X_0(p)_{[0,a[}$.
  \end{enumerate}
\end{prop}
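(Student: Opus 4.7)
The plan is to mirror the approach of Proposition \ref{prop-dyn-1}: use the identification $C \simeq X_0(p^2)$, under which a point of $C$ is a pair $(E, H_2)$ with $H_1 := H_2[p]$, so that $p_2$ records $(E, H_1)$ and $p_1$ records $(E/H_1, H_2/H_1)$. All three assertions then reduce to a case-by-case computation of $\deg_{E/H_1}(H_2/H_1)$ in terms of $a := \deg_E(H_1)$, organized by two elementary observations. First, writing $K := E[p]/H_1 \subset E/H_1$ for the kernel of the dual isogeny $E/H_1 \to E$, the identity $\deg(\ker \varphi) + \deg(\ker \varphi^\vee) = 1$ for a degree-$p$ isogeny $\varphi$ (a standard consequence of Cartier duality in the Fargues degree function theory, since $\ker \varphi^\vee = (\ker \varphi)^D$) gives $\deg_{E/H_1}(K) = 1 - a$. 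Second, $H_2$ being cyclic of order $p^2$ is equivalent to $H_2/H_1 \neq K$. Combined with the paper's recalled facts that the $p+1$ order-$p$ subgroups of $(E/H_1)[p]$ have degrees summing to $1$ and are either all equal or organized around a unique canonical subgroup, this determines $\deg(H_2/H_1)$ as soon as one knows whether $K$ is the canonical subgroup of $E/H_1$.

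For (1), when $a \leq p/(p+1)$ one has $\deg K = 1 - a \geq 1/(p+1)$, so $K$ is the canonical subgroup of $E/H_1$; the remaining $p$ subgroups, including $H_2/H_1$ (since $H_2/H_1 \neq K$), then all have degree $(1 - (1 - a))/p = a/p$. For (2), when $a > p/(p+1)$, $\deg K < 1/(p+1)$ forces $K$ to be non-canonical, so the sum-to-one relation yields that every non-canonical subgroup (including $K$) has degree $1 - a$ while the unique canonical subgroup has degree $1 - p(1-a)$. By definition of the decomposition $C^{\{a\}} = C^{\{a\}, can} \coprod C^{\{a\}, et}$, the first component parameterizes $H_2/H_1$ equal to the canonical subgroup (image degree $1 - p(1-a)$) and the second parameterizes $H_2/H_1$ as a non-canonical subgroup distinct from $K$ (image degree $1 - a$), proving (2).

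For (3), the possible image degrees $a'/p$, $1 - p(1-a')$, or $1 - a'$ for $a' \in [0, a)$ are uniformly strictly less than $a$: one has $a'/p < a/p \leq a$, while the remaining two expressions only arise when $a' > p/(p+1)$, which forces $a > p/(p+1)$ and yields $1 - p(1-a') < 1 - p(1-a) < a$ (using $a < 1$) and $1 - a' < 1/(p+1) < a$. Choosing a rational $b$ with $\max(a/p,\ 1 - p(1-a)) < b < a$, the image $p_1(C^{[0, a[})$ lies inside the quasi-compact open $X_0(p)_{[0, b]}$, which is contained in the closed set $X_0(p)_{[0, a[}$ and therefore in its interior. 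The principal source of bookkeeping subtlety is the degenerate boundary values $a \in \{1/(p+1),\ p/(p+1)\}$, where the ``all degrees equal'' alternative of the cited theorem must be handled separately from the canonical-subgroup case; no analytic input beyond the canonical-subgroup facts already recalled is needed.
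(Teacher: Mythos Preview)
Your argument is correct and is essentially the paper's own proof, carried out in more detail. Both proceed by computing $\deg_{E/H_1}(H_2/H_1)$ via the identity $\deg(E[p]/H_1)=1-a$ and the dichotomy for order-$p$ subgroups recalled in the paper, splitting on whether $K=E[p]/H_1$ is the canonical subgroup of $E/H_1$; your explicit bound $\max(a/p,\,1-p(1-a))<b<a$ for part (3) is exactly the content of the paper's one-line conclusion (the paper's printed ``$b>a$'' is a typo for $b<a$). The only cosmetic remark is that among your ``degenerate boundary values'' only $a=p/(p+1)$ is actually relevant here.
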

\begin{proof} We do the case by case argument for the first two points, using Theorem~\ref{thm-can}, (1):  \begin{enumerate}
\item We have $\deg E[p]/H_1 = 1- \deg H_1$.  If $a < \frac{p}{p+1}$, this is the canonical subgroup. We deduce that $\deg H_2/H_1 = \frac{\deg H_1}{p}$. If  $a = \frac{p}{p+1}$, there is no canonical subgroup and the same formula holds. 
\item We have $\deg E[p]/H_1 = 1- \deg H_1$, and hence $E[p]/H_1$ is not the canonical subgroup. In case $(E,H_2) \in C^{[a,1],\,\can}$, we deduce that $\deg H_2/H_1 = 1- p(1- \deg H_1)$. If $(E, H_2) \in C^{[a,1],\,\et}$, we deduce that $\deg H_1/H_2= 1- \deg H_1$. 
\end{enumerate}
We deduce that if $0 < a < 1$, there exists $b<a$ such that $p_1(C^{[0,a[}) \subseteq {X_0(p)_{[0,b[}}$. 
\end{proof}

\subsection{The $U_p$-operator}

We work over $X_0(p)$.  Let $a \in ]0,1[ \cap \qq$. The cohomologies of interest are:

\begin{enumerate}
\item $\mathrm{R}\Gamma ( X_0(p), \omega^k)$,
\item $\mathrm{R} \Gamma( X_0(p)_{[a,1]}, \omega^k)$,
\item $\mathrm{R}\Gamma_{X_0(p)_{[0,a[}} ( X_0(p), \omega^k)$.
\end{enumerate}

The category of perfect  Banach complexes is the homotopy category of the category of bounded complexes of Banach spaces over $\qq_p$.  A quasi-isomorphism of perfect Banach complexes admits an inverse up to homotopy (\cite[Proposition~1.3.22]{MR1779315},  a quasi-isomorphism is always strict by the open mapping theorem).

\begin{lem}  The above  cohomologies can be canonically  represented by  objects of the category of perfect Banach complexes by using $\check{C}$ech covers. 
\end{lem}
\begin{proof} By \cite[Lemma~2.6]{MR1306024},  any finitely generated module over a complete Tate algebra carries a canonical topology and is complete. We deduce that the sections of $\omega^k$ over an affinoid open subset of $X_0(p)$ form naturally a $\qq_p$-Banach space. In case $(1)$  and $(2)$ we can take a \v{C}ech complex for a finite affinoid covering to represent the cohomology. Since any two \v{C}ech cover can be refined by a third one, and since quasi-isomorphism admit inverses up to homotopy, we deduce the independence (up to homotopy) of the \v{C}ech complex.  In case $(3)$, the cohomology fits in an exact triangle: 
$$\mathrm{R}\Gamma_{X_{[0,a[}} ( X_0(p), \omega^k) \rightarrow \mathrm{R}\Gamma ( X_0(p), \omega^k) \rightarrow  \mathrm{R}\Gamma ( X_0(p)_{[a,1]}, \omega^k) \stackrel{+1}\rightarrow$$
and is quasi-isomorphic to the cone of a continuous  map between perfect Banach complexes, hence is a also a perfect Banach complex. 

 \end{proof}

 \subsubsection{Constructing the action}
 A morphism in the category of  perfect Banach complexes is called compact if it can be represented by a morphism between complexes which is compact in each degree.

 We define a naive cohomological correspondence $U_p^{\naive}$ has follows. The two ingredients are the differential  map $p_2^\star \omega_E \rightarrow p_1^\star \omega_E$ and the trace map $(p_1)_\star \oscr_{C} \rightarrow \oscr_{X_0(p)}$. Putting all this together, we obtain
 \[ U_p^{\naive} : (p_1)_\star  p_2^\star \omega^k \longrightarrow  \omega^k.\]

\begin{prop}\label{lem-construction-U_p^{naive}} We have an action of the $U^{\naive}_p$-operator on $ \mathrm{R}\Gamma ( X_0(p), \omega^k)$,
$\mathrm{R} \Gamma( X_0(p)_{[a,1]}, \omega^k)$ and  $\mathrm{R}\Gamma_{X_0(p)_{[0,a[}} ( X_0(p), \omega^k)$ for any $0 < a < 1$. The $U^{\naive}_p$-operator is compact.  Moreover, the  $U_p^{\naive}$-operator acts equivariantly on the triangle:
\[ \mathrm{R}\Gamma_{X_0(p)_{[0,a[}} (X_0(p), \omega^k) \longrightarrow \mathrm{R}\Gamma(X_0(p), \omega^k) \longrightarrow \mathrm{R}\Gamma(X_0(p)_{[a,1]}, \omega^k)  \xrightarrow{+1}  \]
\end{prop}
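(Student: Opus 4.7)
The plan is to build the action on each of the three complexes from the naive correspondence $U_p^{naive} : (p_1)_\star p_2^\star \omega^k \to \omega^k$, then deduce equivariance and compactness using the dynamical estimates of Propositions \ref{prop-dyn-1} and \ref{prop-dyn-2}. For $\mathrm{R}\Gamma(X_0(p), \omega^k)$ the action is the standard composition $\mathrm{R}\Gamma(X_0(p),\omega^k)\xrightarrow{p_2^\star}\mathrm{R}\Gamma(C,p_2^\star\omega^k)\xrightarrow{U_p^{naive}}\mathrm{R}\Gamma(X_0(p),\omega^k)$, where the last arrow is (the derived) pushforward by $(p_1)_\star$; since $X_0(p)$ is proper, the complex is finite-dimensional and compactness is automatic. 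For $\mathrm{R}\Gamma(X_0(p)_{[a,1]},\omega^k)$, Proposition \ref{prop-dyn-1}(3) provides $b>a$ with $p_2(C_{[a,1]})\subseteq X_0(p)_{[b,1]}\subsetneq X_0(p)_{[a,1]}$. Writing $C_{[a,1]}=p_1^{-1}(X_0(p)_{[a,1]})$, this lets me pull back $p_2^\star$ into $\mathrm{R}\Gamma(C_{[a,1]},p_2^\star\omega^k)$ and then apply the trace along the finite flat map $p_1:C_{[a,1]}\to X_0(p)_{[a,1]}$, giving the desired endomorphism.

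For the third complex $\mathrm{R}\Gamma_{X_0(p)_{[0,a[}}(X_0(p),\omega^k)$, I use the cohomology-with-support formalism. Let $C^{[0,a[}=p_2^{-1}(X_0(p)_{[0,a[})$. Pull-back with support gives
\[
\mathrm{R}\Gamma_{X_0(p)_{[0,a[}}(X_0(p),\omega^k)\xrightarrow{p_2^\star}\mathrm{R}\Gamma_{C^{[0,a[}}(C,p_2^\star\omega^k),
\]
I apply the correspondence, and then use the trace in the support setting (Lemma \ref{lem-trace-1}) to land in $\mathrm{R}\Gamma_{p_1(C^{[0,a[})}(X_0(p),\omega^k)$. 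By Proposition \ref{prop-dyn-2}(3), $p_1(C^{[0,a[})\subseteq\overset{\circ}{X_0(p)_{[0,a[}}$, so a change-of-support morphism lands me back in $\mathrm{R}\Gamma_{X_0(p)_{[0,a[}}(X_0(p),\omega^k)$.

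Equivariance with respect to the localization triangle is then formal: the three constructions are built from the same underlying cohomological correspondence on $C$, and the boundary map in the triangle is natural with respect to pull-back, trace, and change of support. Concretely, each arrow in the triangle commutes with $p_2^\star$ and with the trace by the functoriality statements used in Section 2, so the three $U_p^{naive}$'s assemble into a morphism of triangles.

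The main obstacle is compactness in the open and support cases. Here I rely on the strict contractions $b>a$ resp.\ $b<a$ furnished by Propositions \ref{prop-dyn-1}(3) and \ref{prop-dyn-2}(3): the operator factors through restriction $\mathrm{R}\Gamma(X_0(p)_{[a,1]},\omega^k)\to\mathrm{R}\Gamma(X_0(p)_{[b,1]},\omega^k)$ (resp.\ an analogous restriction between support cohomologies for nested intervals with strictly smaller closure), and such restriction maps between cohomologies of nested quasi-compact opens in an adic space, one strictly inside the other, are completely continuous at the level of the perfect Banach complex representatives obtained from finite affinoid covers. The key point is to check that the support variant of this statement does give compactness, for which one applies the triangle relating $\mathrm{R}\Gamma_{X_0(p)_{[0,a[}}$, $\mathrm{R}\Gamma(X_0(p))$, and $\mathrm{R}\Gamma(X_0(p)_{[a,1]})$ and reduces to compactness of the open-cohomology case already treated.
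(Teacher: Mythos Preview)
Your proposal is correct and follows essentially the same approach as the paper: construct the action on all three complexes from the naive correspondence using Propositions \ref{prop-dyn-1} and \ref{prop-dyn-2}, deduce compactness on $X_0(p)_{[a,1]}$ by factoring through restriction to a strictly smaller affinoid, and deduce compactness on the support cohomology by comparing the two localization triangles for nested supports $[0,b[\subset[0,a[$ and using that the other two vertical maps (identity on finite-dimensional classical cohomology, and restriction between nested affinoids) are compact. The paper's write-up of the support case spells out one intermediate change-of-support step (enlarging $C^{[0,a[}$ to $p_1^{-1}p_1(C^{[0,a[})$ before tracing) that you absorb into a single invocation of Lemma \ref{lem-trace-1}, but this is cosmetic.
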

\begin{proof}  The operator  $U_p^{\naive}$ is compact on $\mathrm{R}\Gamma (X_0(p), \omega^k)$ because this later complex is a perfect complex of finite dimensional $\qq_p$-vector spaces. By Proposition~\ref{prop-dyn-1}:   $$\overline{p_2 (C_{[a, 1]})} \subset X_0(p)_{[a,1]}.$$ 
It follows that we have a compact morphism $\mathrm{R}\Gamma ( X_0(p)_{[a,1]}, \omega^k) \rightarrow  \mathrm{R}\Gamma ( p_2 (C_{[a,1]}), \omega^k)$.
Therefore, we have a map:
\begin{center}
\begin{tikzcd}[row sep=normal, column sep=normal]
\mathrm{R}\Gamma \left( X_0(p)_{[a,1]}, \omega^k\right) \arrow[r]
& \mathrm{R}\Gamma \left( p_2 (C_{[a,1]}), \omega^k\right)  \ar[r,"p_2^\star"] \arrow[d, phantom, ""{coordinate, name=Z}] &  \mathrm{R}\Gamma \left( C_{[a,1]}, p_2^\star \omega^k\right) \arrow[dl,
rounded corners,
to path={ -- ([xshift=2ex]\tikztostart.east)
|- (Z) [near start]\tikztonodes
-| ([xshift=-2ex]\tikztotarget.west)
-- (\tikztotarget)}] \\
&\mathrm{R}\Gamma \left( C_{[a,1]}, p_1^\star \omega^k\right) \ar[r,"\mathrm{trace}"]  &\mathrm{R}\Gamma \left( X_0(p)_{[a,1]}, \omega^k\right).
\end{tikzcd}
\end{center}

We deduce that $U^{\naive}_p$ acts and is compact on $\mathrm{R} \Gamma( X_0(p)_{[a,1]}, \omega^k)$ because the first map
\[ \mathrm{R}\Gamma ( X_0(p)_{[a,1]}, \omega^k) \longrightarrow  \mathrm{R}\Gamma ( p_2 (C_{[a,1]}), \omega^k)\]
is.  Similarly, by Proposition~\ref{prop-dyn-2}:
\[{p_1(C^{[0, a[})} \subset \left(X_0(p)_{[0,a[}\right)^\circ.\]
The operator $U_p^{\naive}$ acts like the composite of  the following maps: 
\begin{center}
\begin{tikzcd}[row sep=normal, column sep=normal]
\mathrm{R}\Gamma_{X_{[0,a[}} \left( X_0(p), \omega^k\right) \arrow[r,"p_2^\star"]
& \mathrm{R}\Gamma_{C^{[0,a[}} \left( C, p_2^\star \omega^k\right) \ar[r] \arrow[d, phantom, ""{coordinate, name=Z}] &  \mathrm{R}\Gamma_{C^{[0,a[}} \left( C, p_1^\star \omega^k\right) \arrow[dll,
rounded corners,
to path={ -- ([xshift=2ex]\tikztostart.east)
|- (Z) [near start]\tikztonodes
-| ([xshift=-2ex]\tikztotarget.west)
-- (\tikztotarget)}] \\
\mathrm{R}\Gamma_{ p_1^{-1} p_1(C^{[0,a[})} \left( C, p_1^\star \omega^k\right) \ar[r,"\mathrm{trace}"]  &\mathrm{R}\Gamma_{p_1(C^{[0,a[})} \left( X_0(p),  \omega^k\right)\ar[r] &\mathrm{R}\Gamma_{X_0(p)_{[0,a[}} \left( X_0(p), \omega^k\right).
\end{tikzcd}
\end{center}

We claim that the map $\mathrm{R}\Gamma_{p_1(C{[0,a[})} ( X_0(p),  \omega^k)  \rightarrow \mathrm{R}\Gamma_{X_0(p)_{[0,a[}} ( X_0(p), \omega^k)$ is compact. This will follow  if we  prove that the map $\mathrm{R}\Gamma_{X_0(p)_{[0,b[}} ( X_0(p),  \omega^k)  \rightarrow \mathrm{R}\Gamma_{X_0(p)_{[0,a[}} ( X_0(p), \omega^k)$ for $0 \leq b< a \leq 1$ is compact. To see this, we observe that there is a map of exact triangles:
\begin{center}
\begin{tikzcd} \mathrm{R}\Gamma_{X_0(p)_{[0,b[}} \left( X_0(p),  \omega^k\right) \ar[r] \ar[d] & \mathrm{R}\Gamma \left( X_0(p),  \omega^k\right) \ar[r] \ar[d] &  \mathrm{R}\Gamma \left( X_0(p)_{[b,1]},  \omega^k\right) \ar[d] \ar[r,"+1"]  & {}\\
\mathrm{R}\Gamma_{X_0(p)_{[0,a[}} \left( X_0(p),  \omega^k\right) \ar[r]  & \mathrm{R}\Gamma \left( X_0(p),  \omega^k\right) \ar[r]  &  \mathrm{R}\Gamma \left( X_0(p)_{[a,1]},  \omega^k\right) \ar[r,"+1"] & {}
\end{tikzcd}
\end{center}
and since the two vertical maps on the right are compact, the first vertical map is also compact. 
 \end{proof}

For any $h \in \qq$ we can consider a direct factor $ \mathrm{R}\Gamma ( X_0(p), \omega^k)^{\leq h}$,
$\mathrm{R} \Gamma( X_0(p)_{[a,1]}, \omega^k)^{\leq h}$ and  respectively $\mathrm{R}\Gamma_{X_0(p)_{[0,a[}} ( X_0(p), \omega^k)^{\leq h}$ of $ \mathrm{R}\Gamma ( X_0(p), \omega^k)$,
$\mathrm{R} \Gamma( X_0(p)_{[a,1]}, \omega^k)$ and  respectively $\mathrm{R}\Gamma_{X_0(p)_{[0,a[}} ( X_0(p), \omega^k)$ called the slope  less than $h$ part. It is obtained by representing $U_{p}^{\naive}$ by a compact map of complexes and by applying the slope less than $h$ projector in each degree (\cite{MR144186}). The slope $\leq h$ complexes are perfect complexes of $\qq_p$-vector spaces (\textit{i.e.} they are bounded complexes with finite dimensional cohomology). The finite slope part (denote by a supscript $fs$) is the inverse limit of the $\leq h$-part for $h \rightarrow \infty$.

We also note the following corollary of the proof: 

\begin{coro} For any $0 < a < b <1$, the natural maps
$$ \mathrm{R}\Gamma(X_0(p)_{[a,1]}, \omega^k) \longrightarrow \mathrm{R}\Gamma(X_0(p)_{[b,1]}, \omega^k)$$
and $$ \mathrm{R}\Gamma_{X_0(p)_{[0,a[}}(X_0(p), \omega^k) \longrightarrow \mathrm{R}\Gamma_{X_0(p)_{[0,b[}}(X_0(p), \omega^k)$$ induce quasi-isomorphism on the finite slope part for $U_p^{\naive}$. 
\end{coro}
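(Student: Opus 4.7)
My approach is to construct, for each restriction map $r$ in the statement and for $n$ sufficiently large, a map $V_n$ in the opposite direction such that both $V_n \circ r$ and $r \circ V_n$ equal the $n$-th iterate $(U_p^{naive})^n$; since $(U_p^{naive})^n$ is invertible on the finite slope part, this will force $r$ to be a quasi-isomorphism on the finite slope part.

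For the first restriction, I would use the $n$-fold iterate $C^{(n)}$ of the Hecke correspondence $C$, with iterated projections $p_1^{(n)}, p_2^{(n)} : C^{(n)} \to X_0(p)$. Iterating proposition \ref{prop-dyn-1}(3) produces a sequence $a < a_1 < a_2 < \cdots < 1$ with $p_2^{(n)}((p_1^{(n)})^{-1}(X_0(p)_{[a,1]})) \subseteq X_0(p)_{[a_n,1]}$, and the explicit one-step formula $x \mapsto (p-1)/p + x/p$ on $[1/(p+1), 1]$ coming from proposition \ref{prop-dyn-1} shows $a_n \to 1$; choose $n$ with $a_n > b$. Then define
$V_n : \mathrm{R}\Gamma(X_0(p)_{[b,1]}, \omega^k) \to \mathrm{R}\Gamma(X_0(p)_{[a,1]}, \omega^k)$
by repeating the construction of $U_p^{naive}$ from proposition \ref{lem-construction-U_p^{naive}} using the iterated correspondence $C^{(n)}$: the initial restriction now goes from the cohomology of $X_0(p)_{[b,1]}$ to the cohomology of $p_2^{(n)}((p_1^{(n)})^{-1}(X_0(p)_{[a,1]}))$ (legitimate since this image sits inside $X_0(p)_{[b,1]}$), and the trace is performed along $p_1^{(n)} : (p_1^{(n)})^{-1}(X_0(p)_{[a,1]}) \to X_0(p)_{[a,1]}$.

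The equality $V_n \circ r = (U_p^{naive})^n$ on $\mathrm{R}\Gamma(X_0(p)_{[a,1]}, \omega^k)$ is immediate from the construction. The equality $r \circ V_n = (U_p^{naive})^n$ on $\mathrm{R}\Gamma(X_0(p)_{[b,1]}, \omega^k)$ reduces to the compatibility of the trace along $p_1^{(n)}$ with base change along the open immersion $X_0(p)_{[b,1]} \hookrightarrow X_0(p)_{[a,1]}$, identifying the restricted trace with the trace of the restricted finite flat morphism $(p_1^{(n)})^{-1}(X_0(p)_{[b,1]}) \to X_0(p)_{[b,1]}$. The second statement is proved along identical lines, using proposition \ref{prop-dyn-2} in place of \ref{prop-dyn-1}: iterating gives $p_1^{(n)}((p_2^{(n)})^{-1}(X_0(p)_{[0,b[})) \subseteq X_0(p)_{[0, b_n[}$ with $b_n \to 0$, and one builds $V_n : \mathrm{R}\Gamma_{X_0(p)_{[0,b[}}(X_0(p), \omega^k) \to \mathrm{R}\Gamma_{X_0(p)_{[0,a[}}(X_0(p), \omega^k)$ by the same recipe, ending with a change-of-support map from $X_0(p)_{[0, b_n[}$ to $X_0(p)_{[0, a[}$ (valid for $n$ large enough that $b_n < a$).

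The main technical point I would treat carefully is the factorization $r \circ V_n = (U_p^{naive})^n$: it requires checking that the iterated correspondence $C^{(n)}$ really implements $(U_p^{naive})^n$ (an instance of composition of cohomological correspondences), and that the trace behaves well under restriction to an open subspace (respectively, under enlargement of support). Once these base-change compatibilities are in hand, the conclusion is purely formal from the invertibility of $(U_p^{naive})^n$ on the finite slope part.
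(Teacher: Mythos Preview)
Your proposal is correct and is precisely the argument the paper has in mind: the corollary is stated without proof, as a ``corollary of the proof'' of the preceding proposition, and that proof establishes exactly the one-step improvement (via propositions \ref{prop-dyn-1} and \ref{prop-dyn-2}) that you iterate to build $V_n$. Your explicit construction of $V_n$ from the $n$-fold correspondence and the verification that $r\circ V_n=(U_p^{naive})^n=V_n\circ r$ is the standard way to unpack this kind of statement, and matches the paper's implicit reasoning.
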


\subsubsection{ $U_p$ and Frobenius} \label{sec-up-frob}

It is worth spelling out the action of $U_p$ on $\mathrm{R}\Gamma_{X_0(p)_{[0,a[}} ( X_0(p), \omega^k)$. 

If $a < \frac{1}{p+1}$, we have a correspondence 
\begin{center}
\begin{tikzcd} 
& C_{[0,a]}^{\can} \ar[ld,"p^{\can}_2"'] \ar[rd,"p^{\can}_1"] & \\
X_0(p)_{[0,pa]} & & X_0(p)_{[0,a]}
\end{tikzcd}
\end{center}
where $p_1^{\can}$ is actually an isomorphism. We can think of  this correspondence as the graph of the Frobenius map  $ X_0(p)_{[0,pa]} \rightarrow X_0(p)_{[0,a]}$, sending $(E,H_1)$ to $(E/H_1^{\can}, E[p]/H_1^{\can})$.  We claim that there is  an associated operator:
$$U_p^{\naive, \,\can} : \mathrm{R} \Gamma_{X_0(p)_{[0,a[}} (X_0(p), \omega^k) \longrightarrow \mathrm{R} \Gamma_{X_0(p)_{[0,a[}} (X_0(p), \omega^k).$$

We first observe that $\mathrm{R} \Gamma_{X_0(p)_{[0,a[}} (X_0(p), \omega^k) = \mathrm{R} \Gamma_{X_0(p)_{[0,a[}} (X_0(p)_{[0,a]}, \omega^k)$. We now construct $U_p^{\naive,\,can}$ as the composite: 
\begin{center}
\begin{tikzcd}[row sep=normal, column sep=normal]
\mathrm{R} \Gamma_{X_0(p)_{[0,a[}} \left(X_0(p), \omega^k\right) \arrow[r]
& \mathrm{R} \Gamma_{X_0(p)_{[0,pa[}} \left(X_0(p), \omega^k\right) \ar[r,"(p_2^{\can})^\star"] \arrow[d, phantom, ""{coordinate, name=Z}] &  \mathrm{R} \Gamma_{ C^{\can}_{[0,{a}[} }\left( C^{\can}_{[0, {a}]},  (p_2^{\can})^\star\omega^k\right) \arrow[dl,
rounded corners,
to path={ -- ([xshift=2ex]\tikztostart.east)
|- (Z) [near start]\tikztonodes
-| ([xshift=-2ex]\tikztotarget.west)
-- (\tikztotarget)}] \\
& \mathrm{R} \Gamma_{ C^{\can}_{[0,{a}[} }\left( C^{\can}_{[0, {a}]},  (p_1^{\can})^\star\omega^k\right) \ar[r,"\sim"]  &\mathrm{R} \Gamma_{X_0(p)_{[0,{a}[}} \left(X_0(p), \omega^k\right).
\end{tikzcd}
\end{center}

\begin{prop}\label{prop-U_p=Frob} For $a < \frac{1}{p+1}$, we have $U_p^{\naive,\,\can}  = U_p^{\naive}$ on  $\mathrm{R} \Gamma_{X_0(p)_{[0,a[}} (X_0(p), \omega^k)$.
\end{prop}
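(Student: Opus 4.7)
The plan is to reduce the statement to the observation that, under the hypothesis $a<\tfrac{1}{p+1}$, the closed subset
$C^{[0,a[} := p_2^{-1}(X_0(p)_{[0,a[}) \subseteq C$
which governs the support in the definition of $U_p^{naive}$ is in fact contained in the canonical component $C^{can}_{[0,a]}$. Once this containment is established, the two composites defining $U_p^{naive}$ and $U_p^{naive,can}$ will be seen to agree after identifying supports and chasing the obvious compatibility diagrams.

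The first step is to locate $C^{[0,a[}$ via $p_1$. Since $a<\tfrac{1}{p+1}<\tfrac{p}{p+1}$, Proposition \ref{prop-dyn-2}(1) gives $p_1(C^{[0,a[}) \subseteq X_0(p)_{[0,a/p[}$, hence $C^{[0,a[} \subseteq C_{[0,a/p[}$. As $a/p<\tfrac{1}{p+1}$, we have the clopen decomposition $C_{[0,a/p[} = C^{can}_{[0,a/p[} \sqcup C^{et}_{[0,a/p[}$. Next I would rule out the \'etale component: Proposition \ref{prop-dyn-1}(2) yields $p_2(C^{et}_{[0,a/p[}) \subseteq X_0(p)_{]1-a/p,1]}$, and the inequality $1-a/p > a$ (which follows from $a(p+1) < 1 \leq p$, i.e. from $a < \tfrac{1}{p+1}$) shows that $X_0(p)_{]1-a/p,1]}$ is disjoint from $X_0(p)_{[0,a[}$. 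Therefore $C^{et}_{[0,a/p[} \cap C^{[0,a[} = \emptyset$ and $C^{[0,a[} \subseteq C^{can}_{[0,a/p[}$; the formula $\deg H_1' = p\deg H_1$ on $C^{can}$ (again Proposition \ref{prop-dyn-1}(2)) moreover identifies $C^{[0,a[}$ with $(p_2^{can})^{-1}(X_0(p)_{[0,a[})$ viewed inside the open $C^{can}_{[0,a]}$ of $C$.

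With this containment in hand, I would match the two composites. Applying the change-of-ambient-space property of cohomology with support to the open embedding $C^{can}_{[0,a]} \hookrightarrow C$ containing the closed subset $C^{[0,a[}$, the composite defining $U_p^{naive}$ is rewritten entirely in terms of the canonical correspondence $(p_1^{can},p_2^{can})$; since $p_1^{can}$ is an isomorphism onto $X_0(p)_{[0,a]}$, its trace reduces to the inverse of $(p_1^{can})^\star$, matching the last arrow in the definition of $U_p^{naive,can}$. What remains is to reconcile the different enlargings of support: $U_p^{naive}$ keeps the support $X_0(p)_{[0,a[}$ downstairs throughout, while $U_p^{naive,can}$ first enlarges to $X_0(p)_{[0,pa[}$ before pulling back by $p_2^{can}$. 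This is handled by the commutative square of change-of-support maps relating $X_0(p)_{[0,a[} \subset X_0(p)_{[0,pa[}$ downstairs with $C^{can}_{[0,a/p[} \subset C^{can}_{[0,a[}$ upstairs, compatible with $(p_2^{can})^\star$ because $(p_2^{can})^{-1}(X_0(p)_{[0,b[}) = C^{can}_{[0,b/p[}$ for $b \leq pa$. The main obstacle is purely bookkeeping: the geometric content is the containment $C^{[0,a[} \subseteq C^{can}$, and the rest is a careful comparison of the various supports, ambient spaces, change-of-support maps and differential/trace morphisms appearing in the two definitions.
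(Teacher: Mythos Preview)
Your proposal is correct and follows essentially the same approach as the paper. Both arguments hinge on the geometric fact that $C^{[0,a[}$ is contained in the canonical component $C^{can}_{[0,a]}$, established by observing that $p_2$ maps the \'etale component to degrees $\geq 1-a$ (the paper phrases this via the decomposition of $p_1^{-1}p_1(C^{[0,a[})$ and the factorization through its canonical direct summand, while you pass through the slightly finer containment $C^{[0,a[}\subseteq C^{can}_{[0,a/p[}$ using Proposition~\ref{prop-dyn-2}(1), but this is only a cosmetic difference).
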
  

\begin{proof} This amounts to comparing the construction of $U_p^{\naive}$ (given in the proof of Proposition~\ref{lem-construction-U_p^{naive}}) and of $U_p^{\naive,\,\can}$. We see that $p_1(C^{[0,a[}) \subset X_0(p)_{[0,a]}$ and therefore, $$p_1^{-1} p_1(C^{[0,a[}) = (p_1^{-1} p_1(C^{[0,a[}) \cap C_{[0,a]}^{\can}) \coprod (p_1^{-1} p_1(C^{[0,a[}) \cap C_{[0,a]}^{\et}).$$
Moreover, $p_2((p_1^{-1} p_1(C^{[0,a[}) \cap C_{[0,a]}^{\et})) \subset X_0(p)_{[1-a,a]}$ and therefore, $C^{[0,a[} \hookrightarrow (p_1^{-1} p_1(C^{[0,a[}) \cap C_{[0,a]}^{\can})$. 
We have
\[\mathrm{R}\Gamma_{p_1^{-1} p_1(C^{[0,a[})} ( C, p_1^\star \omega^k) = \mathrm{R}\Gamma_{(p_1^{-1} p_1(C^{[0,a[}) \cap C_{[0,a]}^{\can}) } ( C,p_1^\star \omega^k) \oplus  \mathrm{R}\Gamma_{(p_1^{-1} p_1(C^{[0,a[}) \cap C_{[0,a]}^{\et})}( C , p_1^\star\omega^k).\]

Therefore the map $\mathrm{R}\Gamma_{C^{[0,a[}} (C,p_1^\star \omega^k) \rightarrow \mathrm{R}\Gamma_{p_1^{-1} p_1(C^{[0,a[})} ( C, p_1^\star \omega^k)$ factors through the direct factor 
 $\mathrm{R}\Gamma_{(p_1^{-1} p_1(C^{[0,a[}) \cap C_{[0,a]}^{\can}) } ( C, p_1^\star\omega^k)$. 
 \end{proof} 

 \subsubsection{Slopes estimates and the control theorem}  The following lemma is the key technical input to proving Coleman's classicality theorem. 
 
 \begin{lem}\label{lem-slope} For any $a \in ]0,1[ \cap \qq$, \begin{enumerate}
 \item the slopes of $U_p^{\naive}$ on  $\mathrm{R} \Gamma( X_0(p)_{[a,1]}, \omega^k)$ are $\geq 1$. 
\item the slopes of $U_p^{\naive}$ on  $\mathrm{R}\Gamma_{X_0(p)_{[0,a[}} ( X_0(p), \omega^k)$ are $\geq k$. 
\end{enumerate}
\end{lem}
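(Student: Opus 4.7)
The plan is to reduce both slope estimates to the integral factorizations of $T_p^{naive}$ on the ordinary locus provided by Lemma \ref{lem-T_p-U_p-Frob}, using the dynamical control of Propositions \ref{prop-dyn-1} and \ref{prop-dyn-2} together with Proposition \ref{prop-U_p=Frob}.

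For part (1), I would work with the correspondence $C_{[a,1]} = p_1^{-1}(X_0(p)_{[a,1]})$. Combining Proposition \ref{prop-dyn-1}(1) and (3), every point of $C_{[a,1]}$ has $\deg H_1 \geq (p-1)/p + a/p > 0$, so $H_1$ is uniformly close to multiplicative; in particular $C_{[a,1]}$ meets the ordinary locus only in the Verschiebung component $C^{ord,V}$, never in $C^{ord,F}$ (where $H_1$ would be \'etale of degree $0$). On $C^{ord,V}$, Lemma \ref{lem-T_p-U_p-Frob} gives $T_p^{naive} = p \cdot U_p$ for every $k$ (since $\inf\{1,k\} + \sup\{0,1-k\} = 1$ identically), with $U_p$ integral of slope $\geq 0$. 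The remaining task is to promote this identity from $C^{ord,V}$ to the rigid neighborhood $C_{[a,1]}$ — that is, to realize $T_p^{naive}|_{C_{[a,1]}}$ as $p$ times an integral cohomological correspondence — which then yields slope $\geq 1$ for $U_p^{naive}$ on $\mathrm{R}\Gamma(X_0(p)_{[a,1]}, \omega^k)$.

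For part (2), I would argue symmetrically. By Proposition \ref{prop-U_p=Frob}, on $\mathrm{R}\Gamma_{X_0(p)_{[0,a[}}(X_0(p), \omega^k)$ the operator $U_p^{naive}$ coincides with $U_p^{naive,can}$ built from the canonical-subgroup component $C_{[0,a]}^{can}$, which is the graph of the Frobenius map $X_0(p)_{[0,pa]} \to X_0(p)_{[0,a]}$. On $C^{ord,F}$, Lemma \ref{lem-T_p-U_p-Frob} gives $T_p^{naive} = p^k \cdot F$ with $F$ integral of slope $\geq 0$. On $C_{[0,a]}^{can}$ the projection $p_1^{can}$ is an isomorphism (no trace factor), while the differential $(p_2^{can})^\star \omega \to (p_1^{can})^\star \omega$ is the pullback by the canonical isogeny $E \to E/H_1^{can}$ and is divisible by $p$, so its $k$-th power contributes $p^k$. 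Promoting the integral factorization $T_p^{naive} = p^k \cdot F$ from $C^{ord,F}$ to $C_{[0,a]}^{can}$ yields slope $\geq k$ for $U_p^{naive}$.

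The main obstacle, common to both parts, is the rigid-analytic upgrade of the ordinary-locus factorization: one must show that $T_p^{naive}$ restricted to $C_{[a,1]}$ (resp.\ $C_{[0,a]}^{can}$) preserves an integral model of $\omega^k$-cohomology after division by $p$ (resp.\ $p^k$). This is a local analysis in the spirit of the proof of Proposition \ref{prop-T_p-constructed}, tracking the $p$-adic valuations of the differential $\pi_k$ and of the trace $\mathrm{tr}_{p_1}$ as functions of $\deg H_1$ on the respective strict neighborhoods.
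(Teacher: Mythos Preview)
Your invocation of Lemma~\ref{lem-T_p-U_p-Frob} is misplaced: that lemma decomposes the $T_p$-correspondence $\mathfrak{X}_0(p)^{ord} = \mathfrak{X}_0(p)^{ord,F}\coprod\mathfrak{X}_0(p)^{ord,V}$ over the \emph{spherical} curve $\mathfrak{X}$, whereas the operator here is $U_p^{naive}$ on $X_0(p)$ built from the different correspondence $C\simeq X_0(p^2)$. There is no ``$C^{ord,V}$'' in the paper, and writing ``$T_p^{naive}|_{C_{[a,1]}}$'' conflates two distinct correspondences. The underlying geometric intuition is not wrong --- on $C_{[a,1]}$ the projection $p_1$ does reduce to Frobenius mod $p$, so $\mathrm{tr}_{p_1}$ lands in $p\,\mathcal{O}^+$ --- but this requires its own argument, not Lemma~\ref{lem-T_p-U_p-Frob}. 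The paper bypasses this entirely for part~(1): since the finite-slope part is independent of $a$ and hence concentrated in degree $0$ by affineness, the $q$-expansion identity $U_p^{naive}(\sum a_n q^n)=p\sum a_{np}q^n$ gives slope $\geq 1$ directly.

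For part~(2) there is a genuine gap. Your reduction via Proposition~\ref{prop-U_p=Frob} is correct, and tracking the valuation of the differential of the canonical isogeny on $C^{can}_{[0,a]}$ is the right local input (the paper uses $p\,p_1^\star\omega^+ \subset p_2^\star\omega^+ \subset p^{1-a/p}\,p_1^\star\omega^+$). But what you call ``promoting the integral factorization'' is not a local computation in the style of Proposition~\ref{prop-T_p-constructed}. Here the cohomology lives in degree~$1$ with support, and knowing that the correspondence sends $\omega^{k,+}$ into $p^k\omega^{k,+}$ does \emph{not} yield a slope bound until one knows that the image of $\HH^i_{X_0(p)_{[0,a[}}(X_0(p),\omega^{k,+})$ in each finite-dimensional slope piece is a lattice (open \emph{and} bounded). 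This is the main technical step in the paper's proof: represent the cohomology by a \v{C}ech complex of Banach modules with a lifted compact $\tilde{U}_p$, invoke \cite[Lem.~3.2.2]{pilloniHidacomplexes} to bound the torsion between integral \v{C}ech and sheaf cohomology, and then project to the slope-$s$ part. Only then does the integral estimate on the correspondence translate into a slope bound.
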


\begin{proof}   We first observe that the finite slope part of the cohomology $\mathrm{R} \Gamma( X_0(p)_{[a,1]}, \omega^k)^{fs}$  is independent of $a \in ]0,1[$ and is therefore supported in degree $0$ by affineness for $a$ close to 1. It follows that we are left to prove that $U_p^{\naive}$ has slopes at least $1$ and this is a $q$-expansion computation. Namely, $U_p^{\naive} (\sum a_n q^n) = p \sum a_{np} q^n$.
For the second cohomology, we again observe that the cohomology is independent of $a \in ]0,1[$. We may therefore suppose that $a$ is small enough and use that $U^{\naive}_p = U_p^{\naive,\,\can}$ (Proposition~\ref{prop-U_p=Frob}).  We define an invertible  sheaf  of $\oscr_{X_0(p)}^+$-modules, $\omega_E^+ \subseteq \omega_E$.   A section $f \in \omega_E(U)$ belongs to $\omega_E^+(U)$ if for any $x \in U$, $f_x$ defines a section of the conormal sheaf on the extension $\tilde{E} \rightarrow \Spec~k(x)^+$ of the semi-abelian scheme $E$ at $x$. Alternatively, we have a specialization morphism $\mathrm{sp} : X_0(p) \rightarrow \mathfrak{X}$ where  $\mathfrak{X}$ is the formal scheme equal to the completion of  the prime-to-$p$ level modular curve $X$ viewed as a scheme over $\Spec~\ZZ_p$. Then $$\omega_E^+ = \mathrm{sp}^{-1} \omega_E \otimes_{\mathrm{sp}^{-1}  \oscr_{\mathfrak{X}}} \oscr_{X_0(p)}^+$$ where $\omega_E$ is the modular sheaf over $\mathfrak{X}$.  We let $\omega^{k,+} = (\omega_E^+)^{\otimes k}$. 

We first claim that for any $s \in \qq$, 
\[\im\left( \HH^i_{X_0(p)_{[0,a[}} \left( X_0(p), \omega^{k,+}\right) \longrightarrow  \HH^i_{X_0(p)_{[0,a[}} \left( X_0(p), \omega^{k}\right)^{ = s}\right)\]  (where the supscript $=s$ means the slope $s$ part for the action of $U_p^{\naive}$)
defines a lattice in  $\HH^i_{X_0(p)_{[0,a[}} ( X_0(p), \omega^{k})^{ = s}$ (an open and bounded submodule).   We can indeed represent the cohomology $\mathrm{R}\Gamma_{X_0(p)_{[0,a[}} ( X_0(p), \omega^{k})$ by  the \v{C}ech complex $C^\bullet$ relative to some open covering $\mathcal{U}$, and we can lift the $U_p$-operator to a compact operator $\tilde{U}_p$ on the complex. Note that $C^\bullet$ is a complex of Banach modules. 

 The map from \v{C}ech cohomology with respect to $\mathcal{U}$ to cohomology
 \[\check{\HH}^i_{ \mathcal{U}, X_0(p)_{[0,a[}} \left( X_0(p), \omega^{k,+}\right) \longrightarrow \HH^i_{X_0(p)_{[0,a[}} \left( X_0(p), \omega^{k,+}\right)\] has kernel and cokernel of bounded torsion by \cite[Lemma~3.2.2]{pilloniHidacomplexes}.  It suffices to prove that 
 \[\im \left( \check{\HH}^i_{\mathcal{U}, X_0(p)_{[0,a[}} \left( X_0(p), \omega^{k,+}\right) \longrightarrow  \HH^i_{X_0(p)_{[0,a[}} \left( X_0(p), \omega^{k}\right)^{ = s}\right)\]
defines an open and bounded submodule in $\HH^i_{X_0(p)_{[0,a[}} ( X_0(p), \omega^{k})^{ = s}$. The \v{C}ech cohomology
\[\check{\HH}^i_{ \mathcal{U}, X_0(p)_{[0,a[}} \left( X_0(p), \omega^{k,+}\right)\] is obtained by taking the cohomology of an open and bounded sub-complex $C^{+, \bullet} \subset C^{\bullet}$. The image of $C^{+, \bullet}$ in $C^{\bullet, = s}$ under the continuous projection $C^{\bullet} \rightarrow C^{\bullet, =s}$ (the target is now a complex of finite dimensional vector spaces) is again open and bounded and the claim follows.  

Moreover,   over $C_{[0,a[}^{\can}$, we have a universal isogeny  which gives an isomorphism, $p_2^\star \omega \rightarrow p_1^\star \omega$, for which $p p_1^\star \omega^+ \subset p_2^\star \omega^+ \subset p^{1- \frac{a}{p}} p_1^\star \omega^+$. We deduce that 
$U_p^{\naive}$  induces a  map  $$\mathrm{R}\Gamma_{X_0(p)_{[0,a[}} ( X_0(p), \omega^{k,+}) \longrightarrow \mathrm{R}\Gamma_{X_0(p)_{[0,a[}} ( X_0(p), p^{k(1-\frac{a}{p})}\omega^{k,+})$$ if $k \geq 0$, and $$\mathrm{R}\Gamma_{X_0(p)_{[0,a[}} ( X_0(p), \omega^{k,+}) \longrightarrow \mathrm{R}\Gamma_{X_0(p)_{[0,a[}} ( X_0(p), p^{k}\omega^{k,+})$$ if $k \leq 0$. We deduce that $p^{-k(1- \frac{a}{p})} U_p^{\naive}$ if $k \geq 0$ and $p^{-k} U_p^{\naive}$  if $k \leq 0$ stabilize a lattice in the cohomology, and therefore have only non-negative slope. The lemma follows. 
\end{proof}

\begin{rem} Lemma~3.2.2 in \cite{pilloniHidacomplexes} depends on the main result of \cite{MR488562}, which in turn is a key technical ingredient in \cite{MR2219265}. 
\end{rem}

We now define $U_p = p^{-\inf\{1,k\}} U_p^{\naive}$ and we get:

\begin{thm}
\leavevmode
\begin{enumerate}
\item $U_p$ has slopes $\geq0$ on $\mathrm{R}\Gamma(X_0(p), \omega^k)$,
\item  For any $a \in ]0,1[ \cap \qq$, the map $\mathrm{R\Gamma}(X_0(p), \omega^k)^{< k-1} \rightarrow \mathrm{R\Gamma}(X_0(p)_{[a,1]}, \omega^k)^{< k-1}$ is a quasi-isomorphism,
\item  For any $a \in ]0,1[ \cap \qq$, the map $\mathrm{R\Gamma}_{X_{[0,a[}}(X_0(p), \omega^k)^{< 1-k} \rightarrow \mathrm{R\Gamma}(X_0(p), \omega^k)^{< 1-k}$ is a quasi-isomorphism. 
\end{enumerate}
\end{thm}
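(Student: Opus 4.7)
The plan is to deduce all three statements from the excision triangle
\[
\mathrm{R}\Gamma_{X_0(p)_{[0,a[}}(X_0(p),\omega^k) \to \mathrm{R}\Gamma(X_0(p),\omega^k) \to \mathrm{R}\Gamma(X_0(p)_{[a,1]},\omega^k) \stackrel{+1}{\to}
\]
together with the slope bounds of Lemma \ref{lem-slope}, after rescaling to the normalized operator $U_p = p^{-\inf\{1,k\}}U_p^{naive}$. First I would record the translation of slopes under the rescaling: the bounds of Lemma \ref{lem-slope} become
\[
\mathrm{slopes}\bigl(U_p\,\big|\,\mathrm{R}\Gamma(X_0(p)_{[a,1]},\omega^k)\bigr) \geq 1-\inf\{1,k\} = \sup\{0,1-k\},
\]
\[
\mathrm{slopes}\bigl(U_p\,\big|\,\mathrm{R}\Gamma_{X_0(p)_{[0,a[}}(X_0(p),\omega^k)\bigr) \geq k-\inf\{1,k\} = \sup\{0,k-1\}.
\]
Note that the rescaling is well-defined because all three cohomologies are perfect Banach complexes (Proposition \ref{lem-construction-U_p^{naive}}) and $U_p^{naive}$ is compact, so Coleman's slope decomposition is available, and a slope decomposition is exact on triangles of compact operators.

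For part (1), since $\sup\{0,1-k\}\geq 0$ and $\sup\{0,k-1\}\geq 0$, both outer terms of the excision triangle have slopes $\geq 0$; the long exact sequence of slope-$<0$ parts shows the same for the middle term, proving that $U_p$ acts with non-negative slopes on $\mathrm{R}\Gamma(X_0(p),\omega^k)$.

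For part (2), the fiber of the specialization map $\mathrm{R}\Gamma(X_0(p),\omega^k) \to \mathrm{R}\Gamma(X_0(p)_{[a,1]},\omega^k)$ is $\mathrm{R}\Gamma_{X_0(p)_{[0,a[}}(X_0(p),\omega^k)$, on which the $U_p$-slopes are $\geq \sup\{0,k-1\} \geq k-1$. Thus cutting off slopes $<k-1$ kills the fiber, and the specialization becomes a quasi-isomorphism. Part (3) is entirely symmetric: the cofiber of $\mathrm{R}\Gamma_{X_0(p)_{[0,a[}}(X_0(p),\omega^k) \to \mathrm{R}\Gamma(X_0(p),\omega^k)$ is $\mathrm{R}\Gamma(X_0(p)_{[a,1]},\omega^k)$, whose $U_p$-slopes are $\geq \sup\{0,1-k\} \geq 1-k$, so truncating to slopes $<1-k$ makes the map an isomorphism.

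The only non-formal step is the slope bound of Lemma \ref{lem-slope}, which has already been proved; given it, the present theorem is a purely formal consequence. The one thing worth being careful about is that a slope decomposition of a triangle of perfect Banach complexes under a compact operator yields a triangle of finite-dimensional slope-$<s$ pieces, so that vanishing of the fiber (or cofiber) on $<s$ slopes really does force the corresponding map to be a quasi-isomorphism—this is standard in Coleman's spectral theory but deserves to be cited explicitly.
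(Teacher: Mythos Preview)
Your proposal is correct and follows exactly the approach of the paper: the paper's proof simply says to consider the excision triangle on which $U_p$ acts equivariantly and apply the slope estimates of Lemma~\ref{lem-slope}. You have faithfully expanded this one-line argument by making the rescaling from $U_p^{naive}$ to $U_p$ explicit and spelling out how the slope bounds on the outer terms force the desired vanishing on each $<s$ truncation.
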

\begin{proof} We consider the triangle 
\[\begin{tikzcd}
\mathrm{R}\Gamma_{X_0(p)_{[0,a[}} \left(X_0(p), \omega^k\right) \ar[r] &  \mathrm{R}\Gamma(X_0(p), \omega^k) \ar[r] & \mathrm{R}\Gamma(X_0(p)_{[a,1]}, \omega^k)  \ar[r,"+1"] & {}
\end{tikzcd}\]
on which $U_p$ acts  equivariantly and apply the slope estimates of Lemma~\ref{lem-slope}. 
\end{proof}

\subsubsection{Comparison with spherical level}  For $a > \frac{1}{p+1}$, the map $p_1 : X_0(p)_{[a,1]} \rightarrow X_{1-a}$ is an isomorphism by Theorem~\ref{thm-can} and therefore the pull back map $\mathrm{R}\Gamma (X_{1-a}, \omega^k) \rightarrow \mathrm{R}\Gamma (X_0(p)_{[a,1]}, \omega^k)$ is a quasi-isomorphism.

There is an analogous statement for the cohomology with support that we now explain.  We first introduce a cohomology with support at spherical level.  For $a<1$ we define $X_{<a}\subseteq X$ as the complement in $X$ of the quasi-compact open whose rank one points are $\{x\in X^{\mathrm{rk 1}}\mid\mathrm{Hdg}(x)\geq a\}$ (see Section~\ref{sec-can-sub}).  We may then form the cohomology with support $\mathrm{R}\Gamma_{X_{<a}}(X,\omega^k)$.

Let $a< \frac{1}{p+1}$.  We have a Frobenius map $F : X_0(p)_{[0,a]} \rightarrow X_0(p)_{[0,pa]}$ given by
\[(E,H_1) \longmapsto \left(E/H_1^{\can}, E[p]/H_1^{\can}\right).\]
There is similarly a Frobenius map $F :  {X}_{\frac{a}{p}}  \rightarrow X_a$, given by $E \mapsto E/H_1^{\can}$. 

The Frobenius map fits into the following diagram:
\[
\begin{tikzcd}[column sep=large]
X_0(p)_{[0,a]} \ar[r,"F"]  \ar[d,"p_1"'] & X_0(p)_{[0,pa]} \ar[d,"p_1"] \\
{X}_{\frac{a}{p}} \ar[ru] \ar[r,"F"'] & {X}_{a}
\end{tikzcd}
\]
where the diagonal map is given by $E \mapsto (E/H_1^{\can}, E[p]/H_1^{\can})$.

We can define an endomorphism $F^\star$ of $\mathrm{R}\Gamma_{X_{<\frac{a}{p}}}(X,\omega^k)$ as the composite
\begin{center}
\begin{tikzcd}[row sep=normal, column sep=normal]
\mathrm{R}\Gamma_{X_{<\frac{a}{p}}}\left(X,\omega^k\right) \ar[r] & \mathrm{R}\Gamma_{X_{<a}}\left(X,\omega^k\right)\ar[r,"\sim"]\arrow[d, phantom, ""{coordinate, name=Z}]
& \mathrm{R}\Gamma_{X_{<a}}\left(X_a,\omega^k\right)  \arrow[dl,
rounded corners,
to path={ -- ([xshift=2ex]\tikztostart.east)
|- (Z) [near start]\tikztonodes
-| ([xshift=-2ex]\tikztotarget.west)
-- (\tikztotarget)}] &\\
& \mathrm{R}\Gamma_{X_{<\frac{a}{p}}}\left(X_{\frac{a}{p}},F^*\omega^k\right)\ar[r]& \mathrm{R}\Gamma_{X_{<\frac{a}{p}}}\left(X_{\frac{a}{p}},\omega^k\right) \ar[r,"\sim"]  &\mathrm{R}\Gamma_{X_{<\frac{a}{p}}}\left(X,\omega^\kappa\right)
\end{tikzcd}
\end{center}
where the first map is the change of support for the inclusion $X_{\frac{a}{p}}\subseteq X_a$, which is compact as in the proof of Proposition~\ref{lem-construction-U_p^{naive}}, the third map is pullback by $F$, and the fourth map is induced by the map $F^\star\omega^k\rightarrow\omega^k$ which comes from the isogeny $E\rightarrow E/H_1^{\can}$.  The same construction applied to $X_0(p)_{[0,a]}$ yields the operator $U_p^{\naive,\,\can}=U_p^{\naive}$ of Section~\ref{sec-up-frob} as the correspondence $C_{[0,a]}^{\can}$ is the graph of $F$.

\begin{prop} The pull back map  $p_1^\star:\mathrm{R}\Gamma_{X_{<\frac{a}{p}}}(X, \omega^k) \rightarrow  \mathrm{R}\Gamma_{X_0(p)_{[0,a[}}(X_0(p), \omega^k)$ induces a quasi-isomorphism on the finite slope parts for $F^\star$ and $U_p^{\naive}$
\end{prop}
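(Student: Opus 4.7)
The plan is to factor the $U_p$-operator through the pullback map and to invoke the invertibility of $U_p$ on the finite slope part, together with a trace construction for injectivity.

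The key observation is that the commutative diagram exhibits the Frobenius endomorphism $F : X_0(p)_{[0,a]} \to X_0(p)_{[0,pa]}$ as the composite $\mathrm{diag} \circ p_1$, where $\mathrm{diag} : X_{a/p} \to X_0(p)_{[0,pa]}$ is the section sending $E \mapsto (E/H_1^{can}, E[p]/H_1^{can})$. Combined with proposition \ref{prop-U_p=Frob}, which identifies $U_p^{naive}$ with $U_p^{naive, can}$ on $\mathrm{R\Gamma}_{X_0(p)_{[0,a[}}(X_0(p), \omega^k)$, this shows that $U_p^{naive}$ factors (up to the normalization by a power of $p$) as $p_1^\star \circ \mathrm{diag}^\star$, after making the appropriate change-of-support identifications and using the isomorphism $p_1^{can} : C^{can}_{[0,a]} \cong X_0(p)_{[0,a]}$.

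Granting this factorization, surjectivity of $p_1^\star$ on the finite slope part is immediate: any class $c$ in $\mathrm{R\Gamma}_{X_0(p)_{[0,a[}}(X_0(p), \omega^k)^{fs}$ may be written as $U_p^{naive} \cdot (U_p^{naive})^{-1}c = p_1^\star(\mathrm{diag}^\star (U_p^{naive})^{-1}c)$, hence lies in the image of $p_1^\star$. For injectivity I would use the trace map $\mathrm{tr}_{p_1}$, which is well defined since $p_1$ restricted to the non-canonical locus $X_0(p)_{[0,a]}$ is finite étale of degree $p$: the composition $\mathrm{tr}_{p_1} \circ p_1^\star$ equals multiplication by $p$, which is invertible on the $\qq_p$-vector spaces in question.

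The main technical obstacle is the precise verification of the factorization $U_p^{naive} = p_1^\star \circ \mathrm{diag}^\star$ at the level of cohomology with support: this requires carefully matching the source and target supports (between $X_0(p)_{[0,a[}$ and $X_0(p)_{[0,pa[}$, and the corresponding loci on $X$) and tracking the trace maps that enter into $U_p^{naive, can}$ via the construction given before proposition \ref{prop-U_p=Frob}. A secondary subtlety is that one may need to shrink $a$ (replacing it by $a/p^n$) to arrange for the supports to match up compatibly; this is harmless because, as already noted, the finite slope part of both sides is independent of the specific rational $a \in {]0,1[}$.
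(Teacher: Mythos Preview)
Your approach is correct and is exactly what the commutative diagram preceding the proposition is designed for; in fact the paper states this proposition without proof, so your sketch supplies the intended argument.

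One remark on injectivity: your trace argument works but needs a little care with supports, since $p_1^{-1}$ of the spherical support region also contains the canonical-subgroup section of $X_0(p)$ (lying in $X_0(p)_{[1-pa,1]}$), not only $X_0(p)_{[0,a[}$; so one must first project onto the $X_0(p)_{[0,a[}$ component before invoking the degree-$p$ trace identity. A slightly cleaner alternative, equally visible in the diagram, is to use the other factorization $F_X = p_1 \circ \mathrm{diag}$ along the bottom row: this gives $\mathrm{diag}^\star \circ p_1^\star = U_p$ on the spherical-level cohomology with support, and injectivity of $p_1^\star$ on the finite slope part then follows directly from the invertibility of $U_p$ there, in perfect symmetry with your surjectivity argument.
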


\begin{proof}
This follows from the existence of a commutative diagram
\[
\begin{tikzcd}[column sep=large]
\mathrm{R}\Gamma_{X_{<\frac{a}{p}}}(X, \omega^k)\ar[r,"p_1^\star"]  \ar[d,"F^\star"'] & \mathrm{R}\Gamma_{X_0(p)_{[0,a[}}(X_0(p), \omega^k)\ar[dl] \ar[d,"U_p^{\naive}"] \\
\mathrm{R}\Gamma_{X_{<\frac{a}{p}}}(X, \omega^k) \ar[r,"p_1^\star"'] & \mathrm{R}\Gamma_{X_0(p)_{[0,a[}}(X_0(p), \omega^k)
\end{tikzcd}
\]
which is deduced from the definitions and the commutative diagram above.
\end{proof}

\subsection{$p$-adic variation} We now consider the problem of  interpolation of these cohomologies. 
\subsubsection{Reduction of the torsor $\omega_E$}   
We recall here a construction from \cite{MR3097946} and \cite{MR3265287}. 
We let $\mathcal{T} = \{ w \in \omega_E,~\omega \neq 0\}$ be the $\mathbb{G}_m$-torsor associated to $\omega_E$. We let $\mathbb{G}_a^+ = \Spa (\qq_p \langle T \rangle, \ZZ_p \langle T \rangle )$ be the unit ball, with its additive analytic group  structure. We have subgroups $\ZZ_p^\times(1 + p^{r} \mathbb{G}_a^+) \hookrightarrow \mathbb{G}_m $ for any positive rational number $r$.    

\begin{prop} Let $n \in \ZZ_{\geq 1}$. Let $v < \frac{1}{p^{n-1}(p-1)}$. The $\mathbb{G}_m$-torsor $\mathcal{T}\times_X X_v \rightarrow X_v$ has a natural 
reduction to a  $ \ZZ_p^\times(1 + p^{n-v\frac{p^n}{p-1}} \mathbb{G}_a^+)$-torsor denoted $\mathcal{T}_v$.
\end{prop}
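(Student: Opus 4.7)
The plan is to imitate the construction on the ordinary locus $\mathfrak{X}^{ord}$, where the $\mathbb{G}_m$-torsor $\mathcal{T}$ is reduced to a $\ZZ_p^\times$-torsor by means of the Hodge--Tate image of $T_p(E)^{et}$. Over $X_v$ we no longer have an honest \'etale $\ZZ_p$-local system of rank one sitting inside $E[p^\infty]$, but the inequality $v < \frac{1}{p^{n-1}(p-1)}$ is exactly the threshold below which there exists a canonical finite flat subgroup $H_n^{can} \subset E[p^n]$ of order $p^n$, whose Cartier dual $(H_n^{can})^D$ is \'etale-locally isomorphic to the constant group scheme $\ZZ/p^n\ZZ$. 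This will play the role of a level-$n$ truncation of $T_p(E)^{et}$.

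\textit{Tools.} I would first invoke the existence and basic properties of $H_n^{can}$ (Katz, Abbes--Mokrane, Andreatta--Gasbarri, Fargues). The second ingredient is the Hodge--Tate map $HT : (H_n^{can})^D \to \omega_E \otimes \ocal/p^n$, obtained by restricting the Hodge--Tate map of $E[p^n]$ to the \'etale quotient $E[p^n]/H_n^{can} \simeq (H_n^{can})^D$. The crucial \emph{precision estimate}, which is essentially Fargues' computation of the degree of the Hodge--Tate image of a canonical subgroup, asserts that for any local generator $\psi$ of $(H_n^{can})^D$, the element $HT(\psi)$ is, modulo $p^{n - v\frac{p^n}{p-1}}$, a generator of $\omega_E$. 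Equivalently, the reduction of $HT$ modulo $p^{n - v\frac{p^n}{p-1}}$ is an isomorphism of invertible modules.

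\textit{Construction.} I would then define $\mathcal{T}_v$ \'etale-locally on $X_v$ as follows. Given a trivialization $\psi : \ZZ/p^n\ZZ \xrightarrow{\sim} (H_n^{can})^D$, let $\mathcal{T}_v(\psi) \subset \mathcal{T}$ be the subset of generators $w$ of $\omega_E$ with $w \equiv HT(\psi(1)) \pmod{p^{n - v\frac{p^n}{p-1}}\omega_E}$ (which makes sense by the precision estimate). This $\mathcal{T}_v(\psi)$ is a torsor under $1 + p^{n - v\frac{p^n}{p-1}}\mathbb{G}_a^+$. Changing $\psi$ by an element $u \in (\ZZ/p^n\ZZ)^\times$ rescales $HT(\psi(1))$ by $u$, so the union over all $\psi$ of the subsets $\mathcal{T}_v(\psi)$ is naturally a torsor under $(\ZZ/p^n\ZZ)^\times \cdot (1 + p^{n - v\frac{p^n}{p-1}}\mathbb{G}_a^+)$; lifting $(\ZZ/p^n\ZZ)^\times$ to $\ZZ_p^\times$ introduces an ambiguity of $1 + p^n \ZZ_p$, which is already contained in $1 + p^{n - v\frac{p^n}{p-1}}\mathbb{G}_a^+$, so the structure group is identified with $\ZZ_p^\times(1 + p^{n - v\frac{p^n}{p-1}}\mathbb{G}_a^+)$. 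Descending from the \'etale cover yields the desired subtorsor $\mathcal{T}_v$ of $\mathcal{T}\times_X X_v$.

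\textit{Main obstacle.} The serious point is the Hodge--Tate precision estimate. The condition $v < \frac{1}{p^{n-1}(p-1)}$ ensures existence of the level-$n$ canonical subgroup, and the precision $n - v\frac{p^n}{p-1}$ is exactly what one obtains by propagating the classical level-one estimate for $HT$ up to level $n$. Once this estimate is established, the rest is formal, and the output coincides with the reductions used in \cite{MR3097946} and \cite{MR3265287}.
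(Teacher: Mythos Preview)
Your strategy is the same as the paper's: use the level-$n$ canonical subgroup $H_n^{can}$, the Hodge--Tate map on $(H_n^{can})^D$, and a precision estimate to carve out a subtorsor of $\mathcal{T}$. However, you gloss over one technical step that the paper handles explicitly and which is needed for the structure group to come out as stated.

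The Hodge--Tate map for $H_n^{can}$ is a map $\mathrm{HT}:(H_n^{can})^D \to \omega_{H_n^{can}}^+$, not directly into $\omega_E^+$; what one has is the restriction surjection $r:\omega_E^+ \to \omega_{H_n^{can}}^+$ (an isomorphism modulo $p^{n-v\frac{p^n-1}{p-1}}$), while the linearization $\mathrm{HT}\otimes 1$ has cokernel killed only by $p^{v/(p-1)}$. Consequently the image of a generator $\psi$ of $(H_n^{can})^D$ is \emph{not} a generator of $\omega_E^+$ modulo $p^{n - v\frac{p^n}{p-1}}$ when $v>0$. Your assertion that ``$\mathrm{HT}(\psi)$ is, modulo $p^{n - v\frac{p^n}{p-1}}$, a generator of $\omega_E$'' fails for the natural lattice $\omega_E^+$, and if you form $\{w : w \equiv \mathrm{HT}(\psi(1)) \pmod{p^{n-v\frac{p^n}{p-1}}\omega_E^+}\}$ you get a torsor under a group with the wrong radius.

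The paper repairs this by introducing a modified lattice $\omega_E^\sharp = \{w \in \omega_E^+ : r(w) \in \im(\mathrm{HT}\otimes 1)\} \subset \omega_E^+$, for which $\mathrm{HT}$ does induce an isomorphism $(H_n^{can})^D \otimes \oscr_{X_v}^+/p^{n-v\frac{p^n}{p-1}} \simeq \omega_E^\sharp/p^{n-v\frac{p^n}{p-1}}$. The torsor $\mathcal{T}_v$ is then defined inside $\omega_E^\sharp$ (not $\omega_E^+$) as those $\omega$ congruent modulo $p^{n-v\frac{p^n}{p-1}}$ to $\mathrm{HT}_v(P)$ for some $P$ of exact order $p^n$; this is exactly what makes the structure group equal to $\ZZ_p^\times(1+p^{n-v\frac{p^n}{p-1}}\mathbb{G}_a^+)$. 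Once you insert the modification $\omega_E^\sharp$, your argument coincides with the paper's.
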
 

\begin{proof} We denote by $\omega^+_E \subset \omega_E$ the locally free sheaf of integral relative differential forms. 
For $v < \frac{1}{p^{n-1}(p-1)}$, there is a canonical subgroup of level $n$,  $H^{\can}_n$ over $X_v$.  The isogeny $E \rightarrow E/H^{\can}_n$   yields a map $\omega_{E/H^{\can}_n}^+ \rightarrow \omega_{E}^+$, with cokernel $\omega_{H_n}^+$. 
The surjective map $r : \omega_{E}^+ \rightarrow \omega_{H^{\can}_n}^+$ induces an isomorphism: 
$$ \omega_{E}^+/p^{n-v\frac{p^n-1}{p-1}} \xrightarrow{\sim} \omega_{H^{\can}_n}^+/p^{n-v\frac{p^n-1}{p-1}}.$$

There is a Hodge-Tate map $\mathrm{HT}:(H^{\can}_n)^D \rightarrow \omega_{H^{\can}_n}^+$ (of sheaves on the \'etale site, see \cite[p.~117]{MR0347836}), and its linearization $\mathrm{HT} \otimes 1  : (H^{\can}_n)^D \otimes \oscr_{X_v}^+ \rightarrow \omega_{H^{\can}_n}^+$  has cokernel killed by $p^{\frac{v}{p-1}}$.  We have a diagram:
\begin{eqnarray*}
\xymatrix{ & \omega_{E}^+ \ar[d]^{r} \\
(H^{\can}_n)^D \ar[r]^{\mathrm{HT}} & \omega_{H_n}^+}
\end{eqnarray*}

We now introduce a  modification of $\omega_{E}^+$:  
let $\omega_{E}^\sharp  = \{ w \in \omega_{E}^+,~r(w) \in \im(\mathrm{HT} \otimes 1) \} \subset \omega_{E}^+$. This is a locally free sheaf of $\oscr_{X_v}^+$-modules on the \'etale site. 
The Hodge-Tate map induces an isomorphism:
$$ \mathrm{HT}_v : (H^{\can}_n)^D \otimes \oscr_{X_v}^+/p^{n-v\frac{p^n}{p-1}} \longrightarrow \omega_{E}^\sharp/p^{n-v\frac{p^n}{p-1}}.$$
We let $\mathcal{T}_v$ be the torsor under the group $\ZZ_p^\times(1 + p^{n-v\frac{p^n}{p-1}} \mathbb{G}_a^+)$ defined by 
$$\mathcal{T}_v = \left\{  \omega \in   \omega_E^\sharp,~\exists P \in (H^{\can}_n)^D, ~p^{n-1}P \neq 0,  \mathrm{HT}_v(P) =  \omega \mod  p^{n-v\frac{p^n}{p-1}} \right\}.$$
We have a natural  map $\mathcal{T}_v \hookrightarrow \mathcal{T}$, equivariant for the  analytic group  map: $ \ZZ_p^\times(1 + p^{n-v\frac{p^n}{p-1}} \mathbb{G}_a^+) \rightarrow \mathbb{G}_m$. 
\end{proof}

\subsubsection{Interpolation of the sheaf}

We let $\mathcal{W} = \Spa (\Lambda, \Lambda) \times \Spa (\qq_p, \ZZ_p)$ be the weight space. We let $\kappa^{\un} : \ZZ_p^\times \rightarrow \oscr_{\mathcal{W}}^\times$ be the universal character. 

 We can write $\mathcal{W}$ as an increasing union of affinoids $\mathcal{W} = \cup_{0< r <1} \mathcal{W}_r$ where $r \in \qq \cap ]0,1[$ and each $\mathcal{W}_r$ is a finite union of balls of radius $r$. Over each $\mathcal{W}_r$, there is $t(r) \in \qq_{>0}$ such that the universal character extends to a character $\kappa^{\un} :  \ZZ_p^\times(1+p^{t(r)} \oscr_{\mathcal{W}}^+) \rightarrow \oscr_{\mathcal{W}}^\times$. 
 
We now fix $r$ and  we choose $v$ small enough and $n$ large enough such that $t(r) \leq n- v \frac{p^n}{p-1}$ and we define a locally free sheaf $\omega^{\kappa^{\un}}$ over $X_v \times \mathcal{W}_r$: 
\[\omega^{\kappa^{\un}} = (\oscr_{\mathcal{T}_v} \otimes \oscr_{\mathcal{W}_r})^{\ZZ_p^\times(1 + p^{n- v \frac{p^n}{p-1}} \oscr_{X_v \times \mathcal{W}_r}^+)}.\]
 
Since $\mathcal{T}_v \rightarrow X_v$ is an \'etale torsor, the sheaf $\omega^{\kappa^{\un}}$ is a locally free sheaf of $\oscr_{X_v \times \mathcal{W}_r}$-modules in the \'etale topology. It is actually a locally free sheaf of $\oscr_{X_v\times \mathcal{W}_r}$-modules in the Zariski topology by the main result of \cite{MR1603849}. 
 
 \subsubsection{Interpolation of the cohomology}
 
 For $a \in ]0, \frac{v}{p}]$, we have a  map $p_1 : X_0(p)_{[0,a]} \rightarrow X_v$ and we can therefore pull back the sheaf $\omega^{\kappa^{\un}}$ to an invertible sheaf over $X_0(p)_{[0,a]} \times \mathcal{W}_r$. 
 
 If $a \in [1-v,1[$,  we have a  map $p_1 : X_0(p)_{[a,1]} \rightarrow X_v$ and we can  pull back the sheaf $\omega^{\kappa^{\un}}$ to an invertible sheaf over $X_0(p)_{[a,1]} \times \mathcal{W}_r$.

 We consider the cohomologies: 
 
 \begin{enumerate}
 \item  $\mathrm{R}\Gamma_{X_{0}(p)_{[0,a[}}(X_0(p)  , \omega^{\kappa^{\un}})$ 
 \item  $\mathrm{R}\Gamma(X_0(p)_{[a,1]}  , \omega^{\kappa^{\un}})$.
 \end{enumerate}
 
  Note that the first cohomology group is well defined because
  \[\mathrm{R}\Gamma_{X_{0}(p)_{[0,a[}}(X_0(p)  , \omega^{\kappa^{\un}}) = \mathrm{R}\Gamma_{X_{0}(p)_{[0,a[}}(X_0(p)_{[0,a]}  , \omega^{\kappa^{\un}}).\]
 
 These cohomologies belong to the category of perfect complexes of Banach spaces over $\oscr_{\mathcal{W}_r}$ which is the homotopy category of the category of bounded complexes of projective Banach modules over $\oscr_{\mathcal{W}_r}$.

 \subsubsection{The $U_p$-operator on $\mathrm{R}\Gamma(X_0(p)_{[a,1]}  , \omega^{\kappa^{\un}})$}\label{sect-defining-U_p}
 We need to consider the $U_p$-correspondence on $X_0(p)_{[a,1]}$  for $a \geq 1-v > \frac{1}{p+1}$, where by Proposition~\ref{prop-dyn-1}, it induces to a correspondence
\[
 \xymatrix{ & C_{[a,1]} \ar[rd]^{p_1} \ar[ld]_{p_2}& \\
 X_0(p)_{[ \frac{p-1}{p}+\frac{a}{p}, 1]} & & X_0(p)_{[{a}, 1]}}
\]
 \begin{lem} There is a natural isomorphism $p_2^\star \omega^{\kappa^{\un}} \rightarrow p_1^\star \omega^{\kappa^{\un}}$, and we can define a cohomological correspondence $U_p : p_{1\star} p_2^\star \omega^{\kappa^{\un}} \rightarrow \omega^{\kappa^{\un}}$ which specializes in weight $k \geq 1$ to $U_p$. 
 \end{lem}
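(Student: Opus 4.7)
The plan is to exploit the fact that on $C_{[a,1]}$ with $a > 1/(p+1)$, the universal subgroup $H_1 = H_1^{can}$ is canonical on both sides of the correspondence, so that the universal isogeny $\pi : p_1^\star E \to p_2^\star E$ is \'etale (its kernel is the complement to the multiplicative subgroup $H_1$). Thus $\pi$ induces an isomorphism of $\mathbb{G}_m$-torsors $\pi^\star : p_2^\star \mathcal{T} \to p_1^\star \mathcal{T}$ on $C_{[a,1]}$, and the substantive content is to show that this isomorphism descends to the $\ZZ_p^\times(1+p^{n-v\tfrac{p^n}{p-1}}\mathbb{G}_a^+)$-reductions $p_i^\star \mathcal{T}_v$.

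First I would verify that $\pi$ identifies canonical subgroups: for $a$ sufficiently close to $1$ and $v$ small enough, both $p_1^\star E$ and $p_2^\star E$ carry canonical subgroups $H_n^{can}$ of level $n$, and $\pi(H_n^{can}(p_1^\star E)) = H_n^{can}(p_2^\star E)$. The key point is that the image is a finite flat subgroup scheme of $p_2^\star E[p^n]$ of rank $p^n$ (since $\ker \pi$ is disjoint from $H_n^{can}$, because $\ker \pi \cap E[p] \cap H_n^{can} \subseteq \ker \pi \cap H_1^{can} = 0$), and one identifies it as the canonical subgroup by the degree characterization of Fargues. Second, using the compatibility of the Hodge-Tate map with isogenies (i.e.\ $\pi^\star \circ \mathrm{HT}_{p_2^\star E} = \mathrm{HT}_{p_1^\star E} \circ \pi^\vee$ on $(H_n^{can})^D$), one deduces that $\pi^\star$ carries the modification $(p_2^\star \omega_E)^\sharp$ isomorphically onto $(p_1^\star \omega_E)^\sharp$ and intertwines the trivializations defining $\mathcal{T}_v$. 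Taking $\kappa^{un}$-invariants of $(\oscr_{p_2^\star \mathcal{T}_v} \hat\otimes \oscr_{\mathcal{W}_r})^{\ZZ_p^\times(1+p^N\mathbb{G}_a^+)} \cong (\oscr_{p_1^\star \mathcal{T}_v} \hat\otimes \oscr_{\mathcal{W}_r})^{\ZZ_p^\times(1+p^N\mathbb{G}_a^+)}$ then yields the desired isomorphism $p_2^\star \omega^{\kappa^{un}} \to p_1^\star \omega^{\kappa^{un}}$.

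Having this isomorphism, I would define $U_p$ as the composite
\[
 p_{1\star} p_2^\star \omega^{\kappa^{un}} \xrightarrow{\ p_{1\star}(\pi^\star)\ } p_{1\star} p_1^\star \omega^{\kappa^{un}} \xrightarrow{\ \frac{1}{p}\mathrm{tr}_{p_1}\otimes\mathrm{id}\ } \omega^{\kappa^{un}}.
\]
Since we work on the rigid generic fiber, the factor $p^{-1}$ is harmless. For the specialization at an integer weight $k \geq 1$: the $\kappa^{un}$-level isomorphism specializes to the \'etale differential map $(\pi^\star)^{\otimes k} : p_2^\star \omega^k \to p_1^\star \omega^k$ (this compatibility follows from the Hodge-Tate description, in the same spirit as Lemma \ref{lem-comparaison-frob}), so the construction above recovers $\frac{1}{p}\, \mathrm{tr}_{p_1}\circ (\pi^\star)^{\otimes k} = \frac{1}{p}\, U_p^{naive} = U_p$, using $-\inf\{1,k\} = -1$ for $k \geq 1$.

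I expect the main obstacle to be the verification that $\pi^\star$ really does preserve the integral modifications $\omega_E^\sharp$ used to define $\mathcal{T}_v$; this requires combining the identification of canonical subgroups with the commutativity of the Hodge-Tate map with respect to the dual isogeny, and tracking the quantitative estimates $p^{n-v\tfrac{p^n}{p-1}}$ on $C_{[a,1]}$ for $a$ near $1$ and $v$ small. Once this compatibility is in place, the rest is formal.
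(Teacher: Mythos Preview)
Your proposal is correct and follows essentially the same approach as the paper: show that the universal isogeny $\pi$ over $C_{[a,1]}$ identifies the level-$n$ canonical subgroups on both sides, use Hodge--Tate compatibility (the paper records this as the commutative square with $\pi^D$ on $(H_n^{can})^D$ and the induced map on $\omega_E^\sharp$) to obtain an isomorphism of the reduced torsors $p_2^\star\mathcal{T}_v \simeq p_1^\star\mathcal{T}_v$, take $\kappa^{un}$-isotypic parts, and then compose with $\tfrac{1}{p}\mathrm{tr}_{p_1}$; your specialization check and normalization are the same as the paper's.
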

 \begin{proof} Over $C_{[a,1]}$, the universal isogeny $\pi : p_1^\star {E} \rightarrow p_2^\star E$ induces an isomorphism on canonical subgroups $p_1^\star H^{\can}_n \simeq p_2^\star H^{\can}_n$, and therefore there is a canonical isomorphism:
 
\[
\xymatrix{ p_2^\star \omega_{E}^\sharp \ar[r] \ar[d] & p_1^\star \omega_{E}^\sharp  \ar[d] \\
p_2^\star \omega_{E}^\sharp/p^{n-v \frac{p^n}{p-1}} \ar[r]  & p_1^\star \omega_{E}^\sharp /p^{n-v \frac{p^n}{p-1}} \\
p_2^\star (H^{\can}_n)^D \ar[r]_{\pi^D} \ar[u]^{\mathrm{HT}} & p_1^\star (H^{\can}_n)^D \ar[u]_{\mathrm{HT}}}
\]

This clearly induces an isomorphism $p_1^\star \mathcal{T}_v \rightarrow p_2^\star \mathcal{T}_v$ and from this we get an isomorphism:
\[p_2^\star \omega^{\kappa^{\un}} \longrightarrow p_1^\star \omega^{\kappa^{\un}}\]
which specializes to the natural isomorphism $p_2^\star \omega^k \rightarrow p_1^\star \omega^k$ at weight $k$.

We now define
\[U_p : p_{1\star} p_2^\star \omega^{\kappa^{\un}} \longrightarrow  p_{1\star} p_1^\star \omega^{\kappa^{\un}} \xrightarrow{\frac{1}{p} \mathrm{\Tr}_{p_1}} \omega^{\kappa^{\un}}.\]

\end{proof}

\begin{coro} The operator $U_p$ is compact on $\mathrm{R}\Gamma(X_0(p)_{[a,1]}, \omega^{\kappa^{\un}})$.
\end{coro}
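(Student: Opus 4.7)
The plan is to mimic, in the family setting over $\mathcal{W}_r$, the argument already used for Proposition \ref{lem-construction-U_p^{naive}}. By Proposition \ref{prop-dyn-1}(3) there exists a rational number $b$ with $a < b < 1$ such that $p_2(C_{[a,1]}) \subseteq X_0(p)_{[b,1]}$. Hence the cohomological correspondence constructed in the preceding lemma factors as
\begin{align*}
\mathrm{R}\Gamma(X_0(p)_{[a,1]}, \omega^{\kappa^{un}}) &\stackrel{\rho}{\rightarrow} \mathrm{R}\Gamma(X_0(p)_{[b,1]}, \omega^{\kappa^{un}}) \stackrel{p_2^\star}{\rightarrow} \mathrm{R}\Gamma(C_{[a,1]}, p_2^\star \omega^{\kappa^{un}}) \\
&\rightarrow \mathrm{R}\Gamma(C_{[a,1]}, p_1^\star \omega^{\kappa^{un}}) \stackrel{\frac{1}{p}\mathrm{tr}_{p_1}}{\longrightarrow} \mathrm{R}\Gamma(X_0(p)_{[a,1]}, \omega^{\kappa^{un}}),
\end{align*}
where $\rho$ is the restriction map and the unlabelled arrow is the canonical isomorphism $p_2^\star\omega^{\kappa^{un}} \simeq p_1^\star\omega^{\kappa^{un}}$ just produced. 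A composition is compact as soon as one of its factors is, so it suffices to show that $\rho$ is compact in the homotopy category of perfect Banach complexes over $\oscr_{\mathcal{W}_r}$.

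To handle $\rho$, I would choose a finite affinoid covering $\{U_i\}$ of $X_0(p)_{[a,1]}$ so that, by interpolating rationals between $a$ and $b$, one obtains a second finite affinoid covering $\{V_i\}$ of $X_0(p)_{[b,1]}$ with each $V_i$ relatively compact inside the corresponding $U_i$. Using the \v{C}ech complexes with respect to these coverings as explicit models for the two cohomologies, $\rho$ is computed termwise by the restriction of sections of $\omega^{\kappa^{un}}$ along the strict inclusions $V_i \hookrightarrow U_i$. After locally trivialising the line bundle $\omega^{\kappa^{un}}$, this reduces to the classical statement that for strict pairs of affinoid algebras the corresponding restriction map is completely continuous, and remains so relative to the affinoid base $\oscr_{\mathcal{W}_r}$; this is the same input underlying the construction of Coleman families and already implicit in the proof of Proposition \ref{lem-construction-U_p^{naive}}.

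The main point deserving attention is verifying that complete continuity is preserved \emph{relative to} $\oscr_{\mathcal{W}_r}$, rather than just over $\qq_p$; once this family version is granted, compactness of $U_p$ on $\mathrm{R}\Gamma(X_0(p)_{[a,1]}, \omega^{\kappa^{un}})$ follows at once from the displayed factorisation.
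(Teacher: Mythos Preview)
Your argument is correct and follows the same strategy as the paper: both exploit that $U_p$ factors through the restriction to the cohomology over a strictly smaller interval $X_0(p)_{[b,1]}$ (coming from Proposition~\ref{prop-dyn-1}), and this restriction is the compact factor. The paper's proof is the bare one-line factorisation, while you additionally unpack why the restriction map $\rho$ is compact relative to $\oscr_{\mathcal{W}_r}$ via \v{C}ech models and complete continuity for nested affinoids; this elaboration is reasonable and is indeed the standard justification underlying the paper's terse statement.
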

\begin{proof} The operator $U_p$ factors as:
$$\mathrm{R}\Gamma\left(X_0(p)_{[a,1]}, \omega^{\kappa^{\un}}\right) \longrightarrow \mathrm{R}\Gamma\left(X_0(p)_{[\frac{a}{p},1]}, \omega^{\kappa^{\un}}\right)  \longrightarrow \mathrm{R}\Gamma\left(X_0(p)_{[a,1]}, \omega^{\kappa^{\un}}\right).$$
\end{proof}

\subsubsection{The $U_p$-operator on $\mathrm{R}\Gamma_{X_{0}(p)_{[0,a[}}(X_0(p)  , \omega^{\kappa^{\un}})$}\label{sect-defining-U_p2}
 
We need to consider the $U_p$-correspondence on $X_0(p)_{[0,a]}$ where actually only the canonical part of the correspondance is relevant by Proposition~\ref{prop-U_p=Frob}, and therefore it reduces to a correspondence: 
\[
 \xymatrix{ & C_{[0,a]}^{\can} \ar[rd]^{p^{\can}_1} \ar[ld]_{p^{\can}_2}& \\
 X_0(p)_{[0, pa]} & & X_0(p)_{[0, a]}}
\]

\begin{lem} There is a natural isomorphism $(p^{\can}_2)^\star \omega^{\kappa^{\un}} \rightarrow (p^{\can}_1)^\star \omega^{\kappa^{\un}}$, and a cohomological correspondence $U_p : (p^{\can}_1)_\star (p^{\can}_2)^\star \omega^{\kappa^{\un}} \rightarrow \omega^{\kappa^{\un}}$ which specializes in weight $k \leq 1$ to $U^{\can}_p$. 
\end{lem}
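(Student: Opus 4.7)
The plan is to mirror the proof of the preceding lemma in \S\ref{sect-defining-U_p}, with the role of the universal isogeny $\pi$ played by its dual $\pi^D$. On $C^{can}_{[0,a]}$, the universal isogeny $\pi : (p^{can}_1)^\star E \to (p^{can}_2)^\star E$ is the canonical quotient $E \to E/H^{can}_1$, so its dual $\pi^D$ has étale kernel and hence induces an isomorphism on formal completions. After shrinking $v$ if necessary so that both sides of the correspondence admit canonical subgroups of level $n$ (possible since $\mathrm{Hdg}((p^{can}_2)^\star E) = p \cdot \mathrm{Hdg}((p^{can}_1)^\star E)$ on the relevant neighborhood), this formal-group isomorphism restricts to an isomorphism $\pi^D : (p^{can}_2)^\star H^{can}_n \simeq (p^{can}_1)^\star H^{can}_n$, and simultaneously to an isomorphism of differentials $(\pi^D)^\star : (p^{can}_1)^\star \omega_E \simeq (p^{can}_2)^\star \omega_E$.

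As in the preceding lemma, functoriality of the Hodge-Tate map in isogenies produces a commutative square identifying $(\pi^D)^\star$ on $\omega^\sharp_E$ with the Cartier dual of $\pi^D$ on $(H^{can}_n)^D$ modulo $p^{n - v p^n/(p-1)}$, so $(\pi^D)^\star$ restricts to an isomorphism of $\sharp$-lattices. Inverting yields an isomorphism $((\pi^D)^\star)^{-1} : (p^{can}_2)^\star \omega^\sharp_E \simeq (p^{can}_1)^\star \omega^\sharp_E$, equivariant for the $\ZZ_p^\times(1+p^{n-v\frac{p^n}{p-1}}\mathbb{G}_a^+)$-torsor structure; passing to the associated torsors $\mathcal{T}_v$ and then to $\kappa^{un}$-equivariant sections produces the desired $(p^{can}_2)^\star \omega^{\kappa^{un}} \simeq (p^{can}_1)^\star \omega^{\kappa^{un}}$. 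Since $p^{can}_1$ is an isomorphism of adic spaces, the trace $(p^{can}_1)_\star (p^{can}_1)^\star \omega^{\kappa^{un}} \to \omega^{\kappa^{un}}$ is the identity, and one then defines $U_p$ as the composite of this isomorphism with the trivial trace.

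To verify the specialization at integer weight $k \leq 1$, I use the identity $\pi^\star \circ (\pi^D)^\star = p \cdot \mathrm{id}$ on $\omega_E$, which gives $((\pi^D)^\star)^{-k} = p^{-k} \pi_k^\star$. Hence the interpolated $U_p$ specializes at weight $k$ to $p^{-k} \pi_k^\star$ tensored with the identity trace, which by the normalization $U_p^{can} = p^{-\inf\{1,k\}} U^{naive, can}_p$ and proposition \ref{prop-U_p=Frob} coincides with $U_p^{can}$ for $k \leq 1$. The main technical obstacle is establishing the identification $\pi^D : (p^{can}_2)^\star H^{can}_n \simeq (p^{can}_1)^\star H^{can}_n$ together with the commutativity of the Hodge-Tate square. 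These are standard facts about canonical subgroups under étale-kernel isogenies, paralleling the identifications used in the preceding lemma, and require only sufficient smallness of $v$ relative to the parameter controlling $\mathcal{W}_r$.
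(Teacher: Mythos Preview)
Your proof is correct and follows essentially the same approach as the paper: both use the dual isogeny $\pi^D$ on $C^{can}_{[0,a]}$ (which has \'etale kernel) to obtain an isomorphism on canonical subgroups of level $n$ and on the $\sharp$-lattices via the Hodge--Tate square, pass to the torsors $\mathcal{T}_v$ and hence to $\omega^{\kappa^{un}}$, and then use that $p_1^{can}$ is an isomorphism together with the relation $\pi^\star(\pi^D)^\star=p$ to identify the specialization at weight $k\le 1$ with $p^{-k}U_p^{can,\,naive}=U_p^{can}$. Your remarks on shrinking $v$ and on the direction of the maps make the argument slightly more explicit than the paper's, but the substance is identical.
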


\begin{proof}  Over $C^{\can}_{[0,a]}$, the dual universal isogeny $\pi^D: (p^{\can}_2)^\star {E} \rightarrow (p^{\can}_1)^\star E$ induces an isomorphism on canonical subgroups $(p^{\can}_2)^\star H^{\can}_n \simeq (p^{\can}_1)^\star H^{\can}_n$, and therefore there is a canonical isomorphism:
\[
\xymatrix{ (p^{\can}_1)^\star \omega_{E}^\sharp \ar[r]^{(\pi^D)^\star} \ar[d] & (p^{\can}_2)^\star \omega_{E}^\sharp  \ar[d] \\
(p^{\can}_1)^\star \omega_{E}^\sharp/p^{n-v \frac{p^n}{p-1}} \ar[r]  & (p^{\can}_2)^\star \omega_{E}^\sharp /p^{n-v \frac{p^n}{p-1}} \\
(p^{\can}_1)^\star (H^{\can}_n)^D \ar[r]_{\pi} \ar[u]^{\mathrm{HT}} & (p^{\can}_2)^\star (H^{\can}_n)^D \ar[u]_{\mathrm{HT}}}
\]

This clearly induces an isomorphism $(p^{\can}_2)^\star \mathcal{T}_v \rightarrow (p^{\can}_1)^\star \mathcal{T}_v$ and from this we get an isomorphism:
$$ (p^{\can}_2)^\star \omega^{\kappa^{\un}} \longrightarrow (p^{\can}_1)^\star \omega^{\kappa^{\un}}$$ or rather more naturally its inverse: 
$$ (p^{\can}_1)^\star \omega^{\kappa^{\un}} \longrightarrow (p^{\can}_2)^\star \omega^{\kappa^{\un}}$$
 which specializes to the natural isomorphism $(p^{\can}_1)^\star \omega^k \rightarrow (p^{\can}_2)^\star \omega^k$ given by $(\pi^D)^\star$, and its inverse $(p^{\can}_2)^\star \omega^k \rightarrow (p^{\can}_1)^\star \omega^k$ is $p^{-k} (\pi^\star)$.

Recall that $p_1$ is an isomorphism, and we therefore get  $U_p : (p^{\can}_1)_\star (p^{\can}_2)^\star \omega^{\kappa^{\un}} \rightarrow   \omega^{\kappa^{\un}}$ which specializes in weight $k \leq 1$ to $p^{-k}U_p^{\naive,\,\can}= U^{\can}_p$. 
\end{proof}

\begin{rem} We therefore find that this is really $U_p$ and not $U_p^{\naive}$ that can be interpolated over the weight space. 
\end{rem} 
\begin{coro} The $U_p$-operator is compact on $\mathrm{R}\Gamma_{X_0(p)_{[0,a[}}(X_0(p), \omega^{\kappa^{\un}})$. 
\end{coro}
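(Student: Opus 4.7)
The plan is to prove compactness by exhibiting $U_p$ as a composite of a compact change-of-support map and a continuous cohomological correspondence map, in direct analogy with the proof of the preceding corollary for $\mathrm{R}\Gamma(X_0(p)_{[a,1]}, \omega^{\kappa^{un}})$.

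First I would unpack the construction of Section \ref{sect-defining-U_p2}. Since $p_1^{can} : C^{can}_{[0,a]} \to X_0(p)_{[0,a]}$ is an isomorphism and $p_2^{can} : C^{can}_{[0,a]} \to X_0(p)_{[0,pa]}$, the operator $U_p$ factors as
$$\mathrm{R}\Gamma_{X_0(p)_{[0,a[}}(X_0(p), \omega^{\kappa^{un}}) \longrightarrow \mathrm{R}\Gamma_{X_0(p)_{[0,pa[}}(X_0(p), \omega^{\kappa^{un}}) \longrightarrow \mathrm{R}\Gamma_{X_0(p)_{[0,a[}}(X_0(p), \omega^{\kappa^{un}}),$$
where the first arrow is the natural change-of-support map (available because $a < pa$), and the second arrow is built from $(p_2^{can})^\star$, the canonical isomorphism $(p_1^{can})^\star \omega^{\kappa^{un}} \simeq (p_2^{can})^\star \omega^{\kappa^{un}}$ established in the previous lemma, and the isomorphism $\mathrm{R}\Gamma_{C^{can}_{[0,a[}}(C^{can}_{[0,a]}, (p_1^{can})^\star \omega^{\kappa^{un}}) \cong \mathrm{R}\Gamma_{X_0(p)_{[0,a[}}(X_0(p), \omega^{\kappa^{un}})$ induced by $p_1^{can}$.

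The second arrow is a bounded morphism of perfect Banach complexes, so the essential point is to show that the first arrow is compact. I would do this exactly as in the proof of Proposition \ref{lem-construction-U_p^{naive}}: exploiting the functoriality of the distinguished triangle
$$\mathrm{R}\Gamma_{X_0(p)_{[0,b[}}(X_0(p), \omega^{\kappa^{un}}) \to \mathrm{R}\Gamma(X_0(p), \omega^{\kappa^{un}}) \to \mathrm{R}\Gamma(X_0(p)_{[b,1]}, \omega^{\kappa^{un}}) \stackrel{+1}{\to}$$
in the parameter $b \in \{a, pa\}$, one reduces the compactness of the enlargement-of-support map to the compactness of the restriction map $\mathrm{R}\Gamma(X_0(p)_{[a,1]}, \omega^{\kappa^{un}}) \to \mathrm{R}\Gamma(X_0(p)_{[pa,1]}, \omega^{\kappa^{un}})$. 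This latter compactness is the standard fact that restriction between two strictly nested affinoids $X_0(p)_{[pa,1]} \subsetneq X_0(p)_{[a,1]}$ (with $pa > a$) is compact on coherent cohomology. Assembling the two halves, $U_p$ is compact.

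The hardest part, if anything, is purely bookkeeping: checking that the restrictions of the sheaf $\omega^{\kappa^{un}}$ to the opens $X_0(p)_{[b,1]}$ are compatible under the change of parameter $b$, so that the functoriality argument goes through verbatim in the interpolated setting. Since the construction of $\omega^{\kappa^{un}}$ only depends on the restriction of the canonical subgroup data to a fixed strict neighborhood of the ordinary locus, and since the compactness argument is purely topological and involves only the adic geometry of the $X_0(p)_{[b,1]}$, this compatibility is automatic and no new input beyond what was used for the $[a,1]$ case is required.
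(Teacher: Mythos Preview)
Your approach is the same as the paper's: factor $U_p$ through the change-of-support map
\[
\mathrm{R}\Gamma_{X_0(p)_{[0,a[}}(X_0(p), \omega^{\kappa^{un}}) \longrightarrow \mathrm{R}\Gamma_{X_0(p)_{[0,pa[}}(X_0(p), \omega^{\kappa^{un}})
\]
followed by the (bounded) correspondence map, and observe the first is compact. The paper's proof is the one-line factorization; your extra justification of compactness via the distinguished triangle is in the right spirit.

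One small correction to your elaboration: the sheaf $\omega^{\kappa^{un}}$ in this section is only defined on $X_0(p)_{[0,c]}$ for $c \leq v/p$ (via $p_1$ into $X_v$), not on all of $X_0(p)$, so the triangle you wrote with middle term $\mathrm{R}\Gamma(X_0(p), \omega^{\kappa^{un}})$ and right term $\mathrm{R}\Gamma(X_0(p)_{[b,1]}, \omega^{\kappa^{un}})$ does not literally make sense for small $b$. The fix is immediate: after possibly shrinking $a$ so that $pa$ is still in the allowable range, run the same triangle argument with ambient space $X_0(p)_{[0,pa]}$ instead of $X_0(p)$, i.e.\ compare
\[
\mathrm{R}\Gamma_{X_0(p)_{[0,b[}}(X_0(p)_{[0,pa]}, \omega^{\kappa^{un}}) \to \mathrm{R}\Gamma(X_0(p)_{[0,pa]}, \omega^{\kappa^{un}}) \to \mathrm{R}\Gamma(X_0(p)_{[b,pa]}, \omega^{\kappa^{un}}) \stackrel{+1}{\to}
\]
for $b \in \{a, pa\}$, and use that restriction from $X_0(p)_{[a,pa]}$ to the strictly smaller affinoid is compact. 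With that adjustment your argument goes through.
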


\begin{proof} The operator $U_p$ factors as:
$$\mathrm{R}\Gamma_{X_0(p)_{[0,a[}}\left(X_0(p), \omega^{\kappa^{\un}}\right) \longrightarrow \mathrm{R}\Gamma_{X_0(p)_{[0,pa[}}\left(X_0(p), \omega^{\kappa^{\un}}\right)  \longrightarrow \mathrm{R}\Gamma_{X_0(p)_{[0,a[}}\left(X_0(p), \omega^{\kappa^{\un}}\right).$$
\end{proof}

\subsection{Construction of eigencurves} We use these cohomologies to construct the eigencurve, following the method of \cite{1997InMat.127..417C}.

\subsubsection{ First construction}\label{first-construction}
The cohomology $\mathrm{R}\Gamma(X_0(p)_{[a,1]}, \omega^{\kappa^{\un}})$ is concentrated in degree $0$, and is represented by $\HH^0(X_0(p)_{[a,1]}, \omega^{\kappa^{\un}})$ which is a projective Banach module over $\oscr_{\mathcal{W}_r}$ (see \cite[Corollary~5.3]{MR3097946}). 

The $U_p$-operator acts compactly on this space. We let $\mathcal{P} \in \oscr_{\mathcal{W}_r}[[T]]$ be the characteristic series of $U_p$. This is an entire series. We let $\mathcal{Z} = V(\mathcal{P}) \subset \mathbb{G}_m^{\mathrm{an}} \times \mathcal{W}_r$. We have a weight map $\pi : \mathcal{Z} \rightarrow \mathcal{W}_r$ which is quasi-finite, partially proper, and locally on the source and the target a finite flat map.

 Over $\mathcal{Z}$ we have a coherent sheaf $\mathcal{M}$ which is the universal generalized eigenspace. This is a locally free as a $\pi^{-1} \oscr_{\mathcal{W}_r}$-module and for any $x = (\kappa, \alpha) \in \mathcal{Z}$,  $x^\star \mathcal{M} = \HH^0(X_0(p)_{[a,1]}, \omega^{\kappa})^{U_p= \alpha^{-1}}$. 

We let $\oscr_{\mathcal{C}} \subseteq \mathrm{End}_{\mathcal{Z}}(\mathcal{M})$ be the subsheaf of $\oscr_{\mathcal{Z}}$-modules generated by the Hecke operators of level prime to $Np$, and we let $\mathcal{C} \rightarrow \mathcal{Z}$ be the associated analytic space.

The construction of $(\mathcal{M}, \mathcal{C}, \mathcal{Z})$ is compatible when $r$ changes (and does not depend on auxiliary choices like $a$, $v$, $n\ldots$ ). We  now let $r$ tend to  $1$, glue everything,  and with a slight abuse of notation we have $\mathcal{C} \rightarrow \mathcal{Z} \rightarrow \mathcal{W}$ and a coherent sheaf $\mathcal{M}$ over $\mathcal{C}$. This is the eigencurve of \cite{MR1696469}. 

\subsubsection{Second construction} We can perform a similar construction, using instead the cohomology $\mathrm{R}\Gamma_{X_0(p)_{[0,a[}}(X_0(p), \omega^{2-\kappa^{\un}}(-D))$ where $\omega^{2-\kappa^{\un}}(-D) = (\omega^{\kappa^{\un}})^\vee \otimes \omega^2(-D)$, as well as the operator $\langle p \rangle^{-1} U_p$. The introduction of this twist is motivated by Serre duality (see Section \ref{section-classical} below). 
Recall that the character $\ZZ_p^\times \rightarrow \Lambda^\times$, $t \mapsto t^2 \kappa^{\un}(t)^{-1}$ induces an automorphism $d : \Lambda \rightarrow \Lambda$, and therefore: $$\mathrm{R}\Gamma_{X_0(p)_{[0,a[}}(X_0(p), \omega^{2-\kappa^{\un}}(-D)) = \mathrm{R}\Gamma_{X_0(p)_{[0,a[}}(X_0(p), \omega^{\kappa^{\un}}(-D)) \otimes^L_{\oscr_{\mathcal{W}_r},d} \oscr_{\mathcal{W}_r}.$$

 This cohomology  is   a perfect complex of projective Banach modules over $\oscr_{\mathcal{W}_r}$ and for any morphism $\Spa(A, A^+) \rightarrow \mathcal{W}_r$, the cohomology $\mathrm{R}\Gamma_{[0,a[}(X_0(p), \omega^{2-\kappa^{\un}}(-D))\hat{\otimes} A)$ is supported in degree $1$. 

We  choose a representative $N^\bullet$ for this cohomology, as well as a compact representative  $\widetilde{\langle p \rangle^{-1} U_p}$ representing the action of $\langle p \rangle^{-1} U_p$. We let ${\mathcal{Q}}_i$ be the characteristic series of $\widetilde{\langle p \rangle^{-1} U_p}$ acting on $N^i$.
We let $\tilde{\mathcal{Q}} = \prod \mathcal{Q}_i$  and we let $\tilde{\mathcal{X}} = V(\tilde{\mathcal{Q}}) \subseteq \mathbb{G}_m^{\mathrm{an}} \times \mathcal{W}_r$. We have a weight map $\pi : \tilde{\mathcal{X}} \rightarrow \mathcal{W}_r$ which is quasi-finite  locally on the source and the target  and a bounded complex of coherent sheaves ``generalized eigenspaces'' $\tilde{\mathcal{N}}^\bullet$ over $\tilde{\mathcal{X}}$. This  is a perfect complex of finite projective $\pi^{-1} \oscr_{\mathcal{W}_r}$-modules. Moreover, $\tilde{\mathcal{N}}^\bullet$ has cohomology  only in degree $1$, and we deduce that $\HH^1(\mathcal{N}^\bullet) = \mathcal{N}$ is a locally free $\pi^{-1} \oscr_{\mathcal{W}_r}$-module. We let $\mathcal{Q} = V(\prod_i \mathcal{Q}_i^{(-1)^i} )$ and we set $\mathcal{X} = V( \mathcal{Q}) \subseteq \tilde{\mathcal{X}}$. The module $\mathcal{N}$ is supported on $\mathcal{X}$. We let $\oscr_{\mathcal{D}} \subseteq \mathrm{End}_{\mathcal{X}}(\mathcal{N})$ be the subsheaf generated by the Hecke algebra of prime to $Np$ level, and we let $\mathcal{D} \rightarrow \mathcal{X}$ be the associated analytic space. We can now let $r$ tend to  $1$, and we have $\mathcal{D} \rightarrow \mathcal{X} \rightarrow \mathcal{W}$ and the sheaf $\mathcal{N}$ over $\mathcal{D}$. This is a second eigencurve.

\subsection{The duality pairing}  In this last section, we prove that $\mathcal{Z}$ and $\mathcal{X}$ are canonically isomorphic, that $\mathcal{M}$ and $\mathcal{N}$ are canonically dual to each other and that $\mathcal{C}$ and $\mathcal{D}$ are canonically identified under the pairing between $\mathcal{M}$ and $\mathcal{N}$.   \subsubsection{Preliminaries}\label{sect-prelimi} We are going to use the theory of dagger spaces \cite{MR1739729}. Let $X^\dagger$ be a dagger space over $\Spa(\qq_p, \ZZ_p)$, smooth  of pure relative dimension $d$.   Let $\mathscr{F}$ be a coherent sheaf on $X^\dagger$. Then one can define the cohomology groups $\HH^i(X^\dagger, \mathscr{F})$ and $\HH^i_c(X^\dagger, \mathscr{F})$. Moreover, these cohomology groups carry canonical topologies (\cite[1.6]{MR1168350}).

By \cite[Theorem~4.4]{MR1739729}, there is a residue map: $$\mathrm{res}_X : \HH^d_c( X^\dagger, \Omega^d_{X/\qq_p}) \longrightarrow \qq_p.$$

This residue map has the following two important properties. Let $f : Y^\dagger \rightarrow X^\dagger$ be an open immersion. Then the diagram: 
\[
\xymatrix{ \HH^d_c( Y^\dagger, \Omega^d_{Y/\qq_p}) \ar[r]^{f_\star} \ar[rd]_{\mathrm{res}_Y } &  \HH^d_c( X^\dagger, \Omega^d_{X/\qq_p}) \ar[d]^{\mathrm{res}_X} \\ &  \qq_p}
\]
  is commutative. See \cite[Corollary~4.2.12]{MR1464367} (although the author is working here in the ``dual'' setting of wide open spaces)). 

Let $ f : Y^\dagger \rightarrow X^\dagger$ be a finite flat map. Then the diagram: 
\begin{equation}\label{diagram-trace}
\begin{gathered}
\xymatrix{ \HH^d_c( Y^\dagger, \Omega^d_{Y/\qq_p}) \ar[r]^{\mathrm{tr}_f} \ar[rd]_{\mathrm{res}_Y } &  \HH^d_c( X^\dagger, \Omega^d_{X/\qq_p}) \ar[d]^{\mathrm{res}_X} \\ &  \qq_p}
\end{gathered}
\end{equation}
is commutative. See \cite[Corollary~4.2.11]{MR1464367} (although the author is working here again in the ``dual'' setting of wide open spaces).

Let $\mathscr{F}$ be a  locally free sheaf of finite rank over $X^\dagger$ which is assumed to be affinoid, smooth of pure dimension $d$.  We let $\mathbf{D}(\mathscr{F}) = \mathscr{F}^\vee \otimes \Omega^d_{X/\qq_p}$. 

Then the residue map induces a perfect pairing (\cite[Theorem~4.4]{MR1739729}):
$$ \HH^0(X^\dagger, \mathscr{F}) \times  \HH^d_c(X^\dagger, \mathbf{D}(\mathscr{F})) \longrightarrow \qq_p$$
for which both spaces are strong duals of each other. 

\begin{rem}  Since the topological vector space $ \HH^0(X^\dagger, \mathscr{F})$ is a compact inductive limit of Banach spaces, we deduce  from the theorem that   the topological  vector space $ \HH^d_c(X^\dagger, \mathbf{D}(\mathscr{F}))$ is a compact projective limit of Banach spaces. 
\end{rem}

\subsubsection{The classical pairing}\label{section-classical}
We denote by $w$ the Atkin-Lehner involution over $X_0(p)$ and by $\langle p \rangle$ the diamond operator given by multiplication by $p$. We recall that $w \circ w = \langle p \rangle$. We recall that $C$ is the Hecke correspondence underlying $U_p$ (see Section~\ref{sect-corres-Up}). We can think of it as the moduli space of $(E, H, H_1)$ where $H, H_1$ are distinct subgroups of $E[p]$. 
We have the projection $p_1 (E,H, H_1) = (E, H_1)$. We also have a projection $p_2(E,H, H_1) = (E/H, E[p]/H)$. Exchanging the roles of $H$ and $H_1$ yields an automorphism $\iota : C \rightarrow C$ and we let $q_i = p_i \circ \iota$. 
Now one checks easily that $w \circ p_1 = q_2$ and $w \circ p_2 = \langle p \rangle \circ q_1$. 
We have a residue map $\HH^1(X_0(p), \Omega^1_{X_0(p)/\qq_p}) \rightarrow \qq_p$ and there is a perfect pairing:
$$\langle -,- \rangle_0 : \HH^0(X_0(p), \omega^k) \times \HH^1(X_0(p), \omega^{2-k}(-D)) \rightarrow \qq_p$$where we use the Kodaira--Spencer isomorphism $\Omega^1_{X_0(p)/\qq_p} \simeq \omega^2(-D)$. 
We modify this pairing, and set: $$\langle -,- \rangle = \langle -, w^\star (-) \rangle_0.$$

\begin{lem} For any $(f, g) \in \HH^0(X_0(p), \omega^k) \times \HH^1(X_0(p), \omega^{2-k}(-D))$, we have: 
$\langle U_p f , g \rangle=\langle f , \langle p\rangle^{-1}U_p g \rangle$.
\end{lem}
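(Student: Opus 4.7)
The strategy is to reduce the identity to a statement about the unmodified Serre pairing. Since $\langle f, h\rangle = \langle f, w^\star h\rangle_0$ for all $(f,h)$, the identity $\langle f, U_p g\rangle = \langle U_p f, g\rangle$ is equivalent to
\[
\langle U_p f, w^\star g\rangle_0 = \langle f, w^\star U_p g\rangle_0,
\]
and hence to the operator identity $w^\star \circ U_p = U_p^{\vee} \circ w^\star$ on $\HH^1(X_0(p), \omega^{2-k}(-D))$, where $U_p^{\vee}$ denotes the Serre-adjoint of $U_p$.

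First, by functoriality of Serre duality for finite flat cohomological correspondences (an argument parallel to Proposition \ref{prop-T_p-selfdual}, with the observation that Kodaira--Spencer identifies $\omega_{X_0(p)/\qq_p}$ with $\omega^2(-D)[1]$), the adjoint $U_p^{\vee}$ is attached to the transposed correspondence $(C, p_2, p_1)$ equipped with the dualized cohomological datum $D(T)$ of the $T$ defining $U_p$. The same normalization identity $-\inf\{1,k\} + (k-1) = -\inf\{1, 2-k\}$ that concludes the proof of Proposition \ref{prop-T_p-selfdual} ensures that the dualization is compatible with the renormalization $U_p = p^{-\inf\{1,k\}} U_p^{naive}$ on both $\omega^k$ and $\omega^{2-k}(-D)$.

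Second, one verifies $w^\star U_p = U_p^{\vee} w^\star$ by comparing underlying correspondences. The relations $w\circ p_1 = p_2\circ \iota$ and $w\circ p_2 = \langle p\rangle\circ p_1\circ \iota$ show that the image of the correspondence $(C, p_1, p_2)$ under $w\times w$ coincides, after reparametrization by the involution $\iota \colon C \to C$ and composition with the automorphism $\langle p\rangle$ on one factor, with the transposed correspondence $(C, p_2, p_1)$. The twist by $\langle p\rangle$ is absorbed by the difference between the normalizations of $U_p$ on $\omega^k$ and on $\omega^{2-k}(-D)$ (this is the $p^{k-1}$ appearing in the proof of Proposition \ref{prop-T_p-selfdual}). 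The remaining check is that $\iota$ intertwines the cohomological datum $T$ with its dual, which is immediate from the construction of $T$ as the tensor product of the differential of the universal isogeny and the trace.

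Combining these two steps,
\[
\langle U_p f, g\rangle = \langle U_p f, w^\star g\rangle_0 = \langle f, U_p^{\vee} w^\star g\rangle_0 = \langle f, w^\star U_p g\rangle_0 = \langle f, U_p g\rangle,
\]
as required. The principal obstacle is the bookkeeping in the second step: tracking the $p$-adic normalizations and the $\langle p\rangle$-twist through the composition $w\circ p_i$. In substance this is a variant of Proposition \ref{prop-T_p-selfdual}, with the role of the self-symmetry of the $T_p$-correspondence played by the combined symmetries $(\iota, w, \langle p\rangle)$ of the $U_p$-correspondence $C$.
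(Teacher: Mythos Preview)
Your approach is the same as the paper's: both reduce to showing $(U_p^{naive})^t \circ w^\star = w^\star \circ U_p^{naive}$ on $D(\omega^k) = \omega^{2-k}(-D)$, using the relations $w\circ p_1 = q_2$ and $w\circ p_2 = \langle p\rangle \circ q_1$ (with $q_i = p_i\circ\iota$), and then verify the scalar normalizations agree.

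However, your handling of the $\langle p\rangle$-twist is incorrect. You write that ``the twist by $\langle p\rangle$ is absorbed by the difference between the normalizations of $U_p$ on $\omega^k$ and on $\omega^{2-k}(-D)$ (this is the $p^{k-1}$ appearing in the proof of Proposition~\ref{prop-T_p-selfdual}).'' But $\langle p\rangle$ is the diamond operator at level $N$ --- an automorphism of $X_0(p)$ acting on the $\Gamma_1(N)$-structure by multiplication by $p$ --- not a power of $p$; on a form of Nebentypus $\chi$ it acts by the root of unity $\chi(p)$. It cannot be cancelled by the scalar renormalization $p^{-\inf\{1,k\}}$. In the paper's argument the $\langle p\rangle$ appears because $w$ is not an involution ($w^2 = \langle p\rangle$), so the right-hand intertwiner in the commutative diagram is $w^\star\langle p\rangle^{-1}$ rather than $w^\star$; it then cancels against the $\langle p\rangle$ arising from the bottom row (the correspondence $(C, q_1, q_2)$ reparametrized via $\iota$), not against any $p$-power.

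Relatedly, your assertion that ``$\iota$ intertwines the cohomological datum $T$ with its dual'' and that this ``is immediate'' skips the actual computation. This intertwining is exactly what the paper's commutative diagram verifies, and it requires identifying the dual map $D(T)$ (via Kodaira--Spencer, as in Lemma~\ref{lem-KS}) and checking compatibility with the universal isogenies under the swap $H \leftrightarrow H_1$. It is routine but not immediate; this is where both the $\langle p\rangle$ and the $p^{k-1}$ genuinely enter, and they play distinct roles.
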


\begin{proof} For any $(f, g) \in \HH^0(X_0(p), \omega^k) \times \HH^1(X_0(p), \omega^{2-k}(-D))$, we have: 
$\langle U^{\naive}_pf ,  g \rangle_0 = \langle  f , (U^{\naive}_p)^t g \rangle_0$ where $(U^{\naive}_p)^t$ is the operator associated to the transpose of $C$, and is obtained as follows:
\[\mathrm{R}\Gamma(X_0(p), \mathbf{D}(\omega^k)) \xrightarrow{p_1^\star} \mathrm{R}\Gamma(C , p_1^\star \mathbf{D}( \omega^k)) \longrightarrow \mathrm{R}\Gamma(C , p_2^\star \mathbf{D}( \omega^k)) \xrightarrow{\mathrm{tr}_{p_2}} \mathrm{R}\Gamma(X_0(p), \mathbf{D}(\omega^k)).\]
We observe that the determination of the adjoint of $U_p^{\naive}$ as the operator associated to the transpose of $C$ uses the  compatibility property of  diagram \ref{diagram-trace}. We have a commutative diagram: 
\[
\xymatrix{ \mathrm{R}\Gamma(X_0(p), \mathbf{D}(\omega^k)) \ar[r]^{p_1^\star} & \mathrm{R}\Gamma(C , p_1^\star \mathbf{D}( \omega^k))  \ar[r] & \mathrm{R}\Gamma(C , p_2^\star \mathbf{D}( \omega^k)) \ar[r]^{\mathrm{tr}_{p_2}} & \mathrm{R}\Gamma(X_0(p), \mathbf{D}(\omega^k)) \\ 
\mathrm{R}\Gamma(X_0(p), \mathbf{D}(\omega^k)) \ar[r]^{q_2^\star} \ar[u]^{w^\star} & \mathrm{R}\Gamma(C , q_2^\star \mathbf{D}( \omega^k))  \ar[r] \ar[u]^{\iota^\star} & \mathrm{R}\Gamma(C , q_1^\star \mathbf{D}( \omega^k)) \ar[r]^{\mathrm{tr}_{q_1}} \ar[u]^{\iota^\star} & \mathrm{R}\Gamma(X_0(p), \mathbf{D}(\omega^k)) \ar[u]^{w^\star \langle p\rangle^{-1}}}
\]
We see that 
$\langle U^{\naive}_pf ,  w^\star g \rangle_0 = \langle  f ,w^\star \langle p\rangle^{-1} U^{\naive}_p g \rangle_0$.
We now check that the normalizing factors are correct so that $\langle U_pf ,  g \rangle = \langle  f ,\langle p\rangle^{-1}U_p g \rangle$.
\end{proof}

\subsubsection{The $p$-adic pairing} We now work again over $\oscr_{\mathcal{W}_r}$. We let $X_0(p)^{m, \dag} = \colim_{a \rightarrow 1} X_0(p)_{[a,1]}$. We let $X_0(p)^{\et, \dag} = \colim_{a \rightarrow 0} X_0(p)_{[0,a]}$.   The Atkin--Lehner map is an isomorphism $ w : X_0(p)^{m, \dag} \rightarrow X_0(p)^{\et, \dag}$ and there is an isomorphism $w : w^\star \omega^{\kappa^{\un}} \rightarrow \omega^{\kappa^{\un}}$ (compare with Sections \ref{sect-defining-U_p} and \ref{sect-defining-U_p2}). 

\begin{lem} We have a canonical perfect pairing
\[\langle -,- \rangle_0 : \HH^0\left(X_0(p)^{m, \dag}, \omega^{\kappa^{\un}}\right) \times \HH^1_c\left(X_0(p)^{m, \dag}, \omega^{2-\kappa^{\un}}(-D)\right) \longrightarrow \oscr_{\mathcal{W}_r}.\]
Moreover
\begin{itemize}
\item $\HH^0(X_0(p)^{m, \dag}, \omega^{\kappa^{\un}})$ is a compact inductive limit  of projective Banach spaces over $\oscr_{\mathcal{W}_r}$,
\item $\HH^1_c(X_0(p)^{m, \dag}, \omega^{2-\kappa^{\un}}(-D))$ is a compact projective limit of projective Banach spaces over $\oscr_{\mathcal{W}_r}$,
\end{itemize}
and both spaces are strong duals of each other. 
\end{lem}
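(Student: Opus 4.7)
The plan is to construct the pairing via cup product, the Kodaira--Spencer isomorphism, and the dagger-space residue map; to identify the topological structure of both sides in terms of projective Banach modules over $\oscr_{\mathcal{W}_r}$; and to deduce perfectness from a relative version of the Serre duality of \cite[Thm.~4.4]{MR1739729}.

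\emph{Construction of the pairing.} The invertible sheaves $\omega^{\kappa^{un}}$ and $\omega^{2-\kappa^{un}}(-D)$ on $X_0(p)^{m,\dag} \times \mathcal{W}_r$ are in natural duality: their tensor product is $\omega^2(-D) \hat{\otimes} \oscr_{\mathcal{W}_r}$. The cup product therefore yields a map into $\HH^1_c(X_0(p)^{m,\dag}, \omega^2(-D)) \hat{\otimes} \oscr_{\mathcal{W}_r}$. Applying the Kodaira--Spencer isomorphism $\omega^2(-D) \simeq \Omega^1_{X_0(p)/\qq_p}$ and the dagger-space residue map $\mathrm{res} : \HH^1_c(X_0(p)^{m,\dag}, \Omega^1_{X_0(p)/\qq_p}) \to \qq_p$ produces the pairing $\langle,\rangle_0$ into $\oscr_{\mathcal{W}_r}$.

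\emph{Topological structure.} One has
$$\HH^0(X_0(p)^{m,\dag}, \omega^{\kappa^{un}}) = \mathrm{colim}_{a \to 1^-} \HH^0(X_0(p)_{[a,1]}, \omega^{\kappa^{un}}),$$
and each $\HH^0(X_0(p)_{[a,1]}, \omega^{\kappa^{un}})$ is a projective Banach $\oscr_{\mathcal{W}_r}$-module by \cite[Cor.~5.3]{MR3097946}. For $a' > a$ both close to $1$, the space $X_0(p)_{[a',1]}$ is relatively compact inside $X_0(p)_{[a,1]}$, so the restriction map is compact by the standard dagger-space argument (and is indeed the mechanism underlying the compactness of $U_p$). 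Hence the colimit is a compact inductive limit of projective Banach modules. A parallel argument, using the excision triangle relating $\HH^1_c$ on $X_0(p)^{m,\dag}$ to the system of supported cohomologies $\HH^1_{X_0(p)_{[a,1]}}(X_0(p), -)$, exhibits $\HH^1_c(X_0(p)^{m,\dag}, \omega^{2-\kappa^{un}}(-D))$ as a compact projective limit of projective Banach $\oscr_{\mathcal{W}_r}$-modules, dual to the system above.

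\emph{Perfectness.} This is the main obstacle, since the duality of \cite[Thm.~4.4]{MR1739729} is formulated for $\qq_p$-coefficients. The cleanest route is to revisit the construction of the residue pairing in \cite[\S4]{MR1739729} and observe that the local trace and residue computations commute with the flat base change $\qq_p \to \oscr_{\mathcal{W}_r}$; since $\omega^{\kappa^{un}}$ is locally free of rank one on $X_0(p)_{[a,1]} \times \mathcal{W}_r$, the relative pairing is again perfect. A more hands-on alternative is to specialize at each classical weight $k \in \ZZ$: the induced pairing is then precisely the classical Serre duality on $X_0(p)^{m,\dag}$, which is perfect by loc.~cit., and combining this with the Zariski density of classical weights in $\mathcal{W}_r$ and the projective-Banach structure established above, a specialization argument promotes the fiberwise perfectness to a perfect topological duality over $\oscr_{\mathcal{W}_r}$.
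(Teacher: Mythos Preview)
Your construction of the pairing via cup product, Kodaira--Spencer, and the dagger residue map is exactly what the paper does. The divergence is in how you handle perfectness and the topological structure over $\oscr_{\mathcal{W}_r}$. The paper's argument rests on a single observation you are missing: on each connected component $\mathcal{W}_r^i$ of weight space (containing the character $x \mapsto x^i$), the sheaf $\omega^{\kappa^{un}}$ is \emph{isotrivial}, i.e.\ there is an isomorphism $\omega^{\kappa^{un}}\vert_{\mathcal{W}_r^i} \simeq \omega^i \hat{\otimes}_{\qq_p} \oscr_{\mathcal{W}_r^i}$ of sheaves on $X_0(p)^{m,\dag} \times \mathcal{W}_r^i$. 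This trivialization is produced by the Eisenstein family (see \cite{MR3097946}, discussion after Prop.~6.2). Once the sheaf is a constant classical sheaf base-changed to $\oscr_{\mathcal{W}_r^i}$, every assertion in the lemma---the compact inductive/projective limit structure, and the perfect duality---reduces immediately to the corresponding assertion for the fixed classical sheaf $\omega^i$, where it is exactly \cite[Thm.~4.4]{MR1739729}.

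Against this, your option~(b) is not a valid argument: Zariski density of classical weights combined with fiberwise perfectness does not by itself yield a perfect topological duality between compact inductive and projective limits of infinite-dimensional Banach modules; the Nakayama-style reasoning that works for coherent sheaves has no direct analogue here. Your option~(a), re-running the proof of \cite[Thm.~4.4]{MR1739729} after flat base change, is in principle feasible but is real work that you have not carried out, and it is precisely what the Eisenstein trivialization lets one avoid. The paper's approach is both shorter and conceptually cleaner: rather than proving a relative duality theorem, it shows that the family of sheaves is, up to a global twist, constant in the weight.
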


\begin{proof} The pairing is obtained as follows: 
\[\HH^0\left(X_0(p)^{m, \dag}, \omega^{\kappa^{\un}}\right) \times \HH^1_c\left(X_0(p)^{m, \dag}, \omega^{2-\kappa^{\un}}(-D)\right) \longrightarrow   \HH^1_c\left(X_0(p)^{m, \dag}, \Omega^1_{X_0(p)^{m}/\qq_p} \hat{\otimes}_{\qq_p}\oscr_{\mathcal{W}_r}\right) \xrightarrow{\mathrm{res}_{X_0(p)^m}} \oscr_{\mathcal{W}_r}.\]
We prove the remaining claims. Let $\mathcal{W}^i$ be the connected component of the character $i : x \mapsto x^i$ for $i= 0, \ldots, p-2$ in $\mathcal{W}$. Then for all $i$, $\omega^{\kappa^{\un}}\vert_{\mathcal{W}_r^i} = \omega^{i} \hat{\otimes}_{\qq_p} \oscr_{\mathcal{W}^i_r}$ is an ``isotrivial'' sheaf. This follows from the existence of the Eisenstein family (see \cite{MR3097946}, the final discussion below Proposition 6.2). Therefore, $$\HH^0(X_0(p)^{m, \dag}, \omega^{\kappa^{\un}})\otimes_{\oscr_{\mathcal{W}_r}} \oscr_{\mathcal{W}_r^i} = \HH^0(X_0(p)^{m, \dag}, \omega^i) \hat{\otimes}_{\qq_p}  \oscr_{\mathcal{W}_r^i}$$
 $$\HH^1_c(X_0(p)^{m, \dag}, \omega^{\kappa^{\un}})\otimes_{\oscr_{\mathcal{W}_r}} \oscr_{\mathcal{W}_r^i} = \HH^1_c(X_0(p)^{m, \dag}, \omega^i) \hat{\otimes}_{\qq_p}  \oscr_{\mathcal{W}_r^i}$$ 
and similarly for cuspidal cohomology. 
All the statements of the lemma are reduced to the similar statements for the classical invertible sheaves $\omega^i$, and they follow from the usual duality for coherent cohomology of affinoid dagger spaces recalled in Section~\ref{sect-prelimi}.
\end{proof}
  
\medskip
  
We deduce form this lemma that there is a canonical pairing: 
\[\langle -,- \rangle : \HH^0\left(X_0(p)^{m, \dag}, \omega^{\kappa^{\un}}\right) \times \HH^1_c\left(X_0(p)^{\et, \dag}, \omega^{2-\kappa^{\un}}(-D)\right) \longrightarrow \oscr_{\mathcal{W}_r}\]
by putting $\langle -,- \rangle = \langle -,w^\star(-) \rangle_0.$ 
 For this pairing,  $\langle U_p (-)  , - \rangle = \langle - , \langle p\rangle^{-1}U_p (-)\rangle$.  We have by definition that $ \HH^0(X_0(p)^{m, \dag}, \omega^{\kappa^{\un}}) = \colim_{a \rightarrow 1}\HH^0(X_0(p)_{[a,1]}, \omega^{\kappa^{\un}})$. 
 
 \begin{lem} We have a canonical isomorphism:  
\[\HH^1_c\left(X_0(p)^{\et, \dag}, \omega^{2-\kappa^{\un}}(-D)\right) = \lim_{a \rightarrow 0} \HH^1_{X_0(p)_{[0,a[}}\left(X_0(p), \omega^{2-\kappa^{\un}}(-D)\right).\]
\end{lem}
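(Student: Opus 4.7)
The plan is to deduce the lemma by combining the duality of the preceding lemma (applied analogously to the \'etale side, with the same Eisenstein-family argument that trivializes $\omega^{\kappa^{un}}$ on each connected component of $\mathcal{W}$ and reduces the claim to Grosse-Kl\"onne's Theorem 4.4) with Serre--Grothendieck duality on the proper curve $X_0(p)$. The parallel duality on the \'etale dagger space gives a canonical perfect pairing
\[
\HH^0(X_0(p)^{et,\dag}, \omega^{\kappa^{un}}) \times \HH^1_c(X_0(p)^{et,\dag}, \omega^{2-\kappa^{un}}(-D)) \to \oscr_{\mathcal{W}_r},
\]
realizing each side as the strong topological dual of the other; here we use $(\omega^{2-\kappa^{un}}(-D))^\vee \otimes \Omega^1_{X_0(p)/\qq_p} \simeq \omega^{\kappa^{un}}$, with Kodaira--Spencer $\Omega^1_{X_0(p)/\qq_p} \simeq \omega^2(-D)$ valid on a strict neighborhood of $X_0(p)^{et}$ (disjoint from the supersingular locus where $p_1$ ramifies). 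Since $\HH^0(X_0(p)^{et,\dag}, \omega^{\kappa^{un}}) = \mathrm{colim}_{a \to 0} \HH^0(X_0(p)_{[0,a]}, \omega^{\kappa^{un}})$ is a compact inductive limit of projective Banach modules over $\oscr_{\mathcal{W}_r}$, its strong dual is canonically $\lim_{a \to 0} \HH^0(X_0(p)_{[0,a]}, \omega^{\kappa^{un}})^\vee$.

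It then suffices to produce, functorially in $a$, a natural topological isomorphism
\[
\HH^0(X_0(p)_{[0,a]}, \omega^{\kappa^{un}})^\vee \simeq \HH^1_{X_0(p)_{[0,a[}}(X_0(p), \omega^{2-\kappa^{un}}(-D)).
\]
I would obtain this by applying Serre--Grothendieck duality on the proper smooth curve $X_0(p)$. The defining exact triangle
\[
\mathrm{R}\Gamma_{X_0(p)_{[0,a[}}(X_0(p), \omega^{2-\kappa^{un}}(-D)) \to \mathrm{R}\Gamma(X_0(p), \omega^{2-\kappa^{un}}(-D)) \to \mathrm{R}\Gamma(X_0(p)_{[a,1]}, \omega^{2-\kappa^{un}}(-D)) \stackrel{+1}{\to}
\]
dualizes, via the residue pairing on $X_0(p)$ coming from Kodaira--Spencer, to a triangle whose first two terms (up to shift) are the duals of $\HH^0(X_0(p)_{[a,1]}, \omega^{\kappa^{un}})$ and $\HH^0(X_0(p), \omega^{\kappa^{un}})$ respectively --- using classical Serre duality $\HH^1(X_0(p), \omega^{2-\kappa^{un}}(-D))^\vee \simeq \HH^0(X_0(p), \omega^{\kappa^{un}})$ on the proper curve. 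Matching against the restriction triangle for the admissible open cover $X_0(p) = X_0(p)_{[0,a]} \cup X_0(p)_{[a',1]}$ (with overlap a tube around the boundary $X_0(p)_{\{a\}}$) forces the third term to be $\HH^0(X_0(p)_{[0,a]}, \omega^{\kappa^{un}})^\vee$, yielding the isomorphism.

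The main obstacle will be ensuring the identifications are topological and compatible with the transition morphisms in $a$, so that the projective limit can be taken cleanly. Continuity of Serre duality and of the residue map give the topological part, while functoriality of Grothendieck duality --- combined with the fact that restriction maps on ordinary cohomology are dual, under the residue pairing, to change-of-support maps on cohomology with support --- gives the compatibility with transitions. Passing to $\lim_{a \to 0}$ of the resulting identifications and matching with the first paragraph's computation of the LHS as this same limit completes the plan.
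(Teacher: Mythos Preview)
Your approach has a genuine gap. You invoke the triangle
\[
\mathrm{R}\Gamma_{X_0(p)_{[0,a[}}(X_0(p), \omega^{2-\kappa^{un}}(-D)) \to \mathrm{R}\Gamma(X_0(p), \omega^{2-\kappa^{un}}(-D)) \to \mathrm{R}\Gamma(X_0(p)_{[a,1]}, \omega^{2-\kappa^{un}}(-D)) \stackrel{+1}{\to}
\]
and then Serre duality on the proper curve $X_0(p)$, but the interpolation sheaf $\omega^{\kappa^{un}}$ (and hence $\omega^{2-\kappa^{un}}(-D)$) is \emph{not defined on all of $X_0(p)$}. By construction it lives only over $X_0(p)_{[0,a]}$ for $a$ small (pulled back via $p_1$ from $X_v$) and, by a separate construction, over $X_0(p)_{[a',1]}$ for $a'$ close to $1$; there is no interpolation sheaf across the supersingular tube. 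Consequently the middle term of your triangle, the term $\mathrm{R}\Gamma(X_0(p)_{[a,1]}, \omega^{2-\kappa^{un}}(-D))$ for small $a$, the global duality $\HH^1(X_0(p), \omega^{2-\kappa^{un}}(-D))^\vee \simeq \HH^0(X_0(p), \omega^{\kappa^{un}})$, and the Mayer--Vietoris cover $X_0(p) = X_0(p)_{[0,a]} \cup X_0(p)_{[a',1]}$ all fail to make sense as written. (The left-hand term of the triangle is well-defined, but only because cohomology with support can be computed on any open neighborhood of the support, so one is really working on $X_0(p)_{[0,b]}$.) The Eisenstein-family trivialization you cite reduces statements to a fixed classical $\omega^i$ over the region where the sheaf is already defined; it does not produce a canonical global extension of $\omega^{\kappa^{un}}$ compatible with both ordinary components.

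The paper's argument is much more direct and avoids this problem by never leaving the affinoid $X_0(p)_{[0,b]}$ where the sheaf lives. One writes the long exact sequence for cohomology with support in $X_0(p)_{[0,a[}$,
\[
0 \to \HH^0(X_0(p)_{[0,b]}, \mathscr{F}) \to \HH^0(X_0(p)_{[a,b]}, \mathscr{F}) \to \HH^1_{X_0(p)_{[0,a[}}(X_0(p)_{[0,b]}, \mathscr{F}) \to 0
\]
(right-exact because $X_0(p)_{[0,b]}$ is affinoid, left-exact because a section of a line bundle vanishing on a dense open is zero), and passes to the limit $a \to 0$. The resulting presentation of the right-hand side as a quotient of sections on the boundary annulus by overconvergent global sections is precisely the description of $\HH^1_c$ for the one-dimensional affinoid dagger space. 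No duality input is required at all.
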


\begin{proof} We have a short exact sequence for $0 <a < b$ small enough: 
\[
\begin{tikzcd}
0 \ar[r] &  \HH^0\left(X_0(p)_{[0,b]}, \omega^{2-\kappa^{\un}}(-D)\right) \ar[r] &  \HH^0\left(X_0(p)_{[a,b]}, \omega^{2-\kappa^{\un}}(-D)\right) \arrow[d, phantom, ""{coordinate, name=Z}] \arrow[d,
rounded corners,
to path={ -- ([xshift=2ex]\tikztostart.east)
|- (Z) [near start]\tikztonodes
-| ([xshift=-2ex]\tikztotarget.west)
-- (\tikztotarget)}] & \\
&& \HH^1_{X_0(p)_{[0,a[}}\left(X_0(p)_{[0,b]}, \omega^{2-\kappa^{\un}}(-D)\right) \ar[r] & 0.
\end{tikzcd}\]
Passing to the limit as $a \rightarrow 0$ proves the lemma.
\end{proof}

\bigskip

Therefore the operator $U_p$ is compact on both cohomology groups.  We deduce from the pairing that the characteristic series of $U_p$ in degree $0$ is the same as the characteristic series of $\langle p \rangle^{-1} U_p$ in degree $1$, so that $\mathcal{X} = \mathcal{Z}$. We have a canonical perfect pairing $\langle -,- \rangle  : \mathcal{M} \times \mathcal{N} \rightarrow \pi^{-1} \oscr_{\mathcal{W}_r}$, for which
$\langle  T_\ell f , g \rangle
 =\langle f, T_\ell^tg\rangle$ for any prime number $\ell$ not dividing $Np$. We recall that $T_\ell^t = \langle \ell \rangle^{-1} T_\ell$.   
 We deduce that the eigencurves $\mathcal{C}$ and $\mathcal{D}$ are  canonically isomorphic.


\end{document}